\def\today{\ifcase\month\or
  January\or February\or March\or April\or May\or June\or
  July\or August\or September\or October\or November\or December\fi
  \space\number\day, \number\year}
 \newtheorem{theorem}{Theorem}
 \newtheorem{lemma}[theorem]{Lemma}
 \newtheorem{proposition}[theorem]{Proposition}
 \theoremstyle{definition}
 \theoremstyle{remark}
 \newcommand{\mc}{\mathcal}
 \newcommand{\C}{\mathbb{C}}
 \newcommand{\R}{\mathbb{R}}
 \newcommand{\N}{\mathbb{N}}
 \newcommand{\Z}{\mathbb{Z}}
 \newcommand{\dt}{\text{\rm d}t}
  \renewcommand{\d}{\text{\rm d}}
 \newcommand{\dx}{\text{\rm d}x}
\newcommand{\pvector}[1]{
  \begin{pmatrix}
    #1
  \end{pmatrix}} 
\newcommand{\ddirac}[1]{
  \,\boldsymbol{\delta}\!\pvector{#1}\!} 
\renewcommand{\d}{{\rm d}} 
\newcommand{\sph}[1]{\mathbb{S}^{#1}}
\begin{document}
\title[Stable sharp Fourier restriction]{Stability of sharp Fourier restriction to spheres}

\author[Carneiro, Negro and Oliveira e Silva]{Emanuel Carneiro, Giuseppe Negro and Diogo Oliveira e Silva}
\subjclass[2010]{42B10, 42B37, 33C55}
\keywords{Sharp Fourier restriction theory, stability, sphere, maximizers, perturbation, spherical harmonics, Gegenbauer polynomials.} 

\address{Emanuel Carneiro, 
ICTP - The Abdus Salam International Centre for Theoretical Physics,
Strada Costiera, 11, I - 34151, Trieste, Italy. ORCID 0000-0001-6229-1139.}
\email{carneiro@ictp.it}

\address{Giuseppe Negro, Departamento de Matemática\\ 
Instituto Superior Técnico\\
Av. Rovisco Pais\\ 
1049-001 Lisboa, Portugal. ORCID 0000-0002-4913-7863.} 
\email{giuseppe.negro@tecnico.ulisboa.pt} 

\address{Diogo Oliveira e Silva, Departamento de Matemática\\ 
Instituto Superior Técnico\\
Av. Rovisco Pais\\ 
1049-001 Lisboa, Portugal. ORCID 0000-0003-4515-4049.} 
\email{diogo.oliveira.e.silva@tecnico.ulisboa.pt} 

\allowdisplaybreaks
\numberwithin{equation}{section}

\maketitle  

\begin{abstract} In dimensions $d \in \{3,4,5,6,7\}$, we prove that the constant functions on the unit sphere $\sph{d-1}\subset \R^d$ maximize the weighted adjoint Fourier restriction inequality
$$   \left| \int_{\R^d} |\widehat{f\sigma}(x)|^4\,\big(1 + g(x)\big)\,\d x\right|^{1/4} \leq {\bf C} \, \|f\|_{L^2(\sph{d-1})}\,,$$
where $\sigma$ is the surface measure on $\sph{d-1}$,
for a suitable class of bounded perturbations $g:\R^d \to \C$. In such cases we also fully classify the complex-valued maximizers of the inequality. In the unperturbed setting  ($g = {\bf 0}$), this was established by Foschi ($d=3$) and by the first and third authors ($d \in \{4,5,6,7\}$) in 2015.
Our methods also yield a new sharp adjoint restriction inequality on $\mathbb S^7\subset \R^8$.
\end{abstract}

\setcounter{tocdepth}{1}
\tableofcontents

\section{Introduction}
Let $\sph{d-1} \subset \R^{d}$ be the $(d-1)$-dimensional unit sphere, $d\geq 2$, equipped with the standard surface measure $\sigma = \sigma_{d-1}$ that verifies $\sigma\big(\sph{d-1}\big)= 2\pi^{d/2} /\, \Gamma(d/2)$. For $f \in L^1(\sph{d-1})$, we define the Fourier transform of the measure $f \sigma$ by
\begin{equation}\label{20210526_14:45}
\widehat{f\sigma}(x)=\int_{\sph{d-1}} e^{ix\cdot\omega}f(\omega)  \,\d\sigma(\omega) \ \ ; \ \ (x \in \R^d).
\end{equation}
{The classic Stein--Tomas inequality states that} 
\begin{equation}\label{20210524_16:43}
\big\|\widehat{f\sigma}\big\|_{L^q(\R^d)} \leq {\bf C} \, \|f\|_{L^2(\sph{d-1})},
\end{equation}
{for all $q \geq 2(d+1)/(d-1)$. This was first proved in 1975 by Tomas \cite{To75} in the range $q > 2(d+1)/(d-1)$, and it was observed by Stein shortly after that the endpoint $q = 2(d+1)/(d-1)$ could also be included; Stein's observation initially went unpublished but one can find a proof in~\cite[pp.~326--328]{SteinBeijing} and a historical discussion in~\cite[Ch.~VIII,~\S 5.15]{St93}. Inequality \eqref{20210524_16:43} was later embedded in a larger framework of Fourier restriction inequalities for quadratic surfaces related to partial differential equations, and presented in the 1977 work of Strichartz \cite{St77}. This appeared shortly after the Babenko--Beckner \cite{Ba, Be75} discovery of the sharp Hausdorff--Young inequality, a landmark in the quest for sharp forms of functional inequalities in harmonic analysis. The general theme of sharp Fourier restriction flourished later, with the pioneering works of Ozawa--Tsutsumi \cite{OT98} on the sharp Strichartz inequality for the Schr\"{o}dinger equation in dimension $d=2$, and of Foschi \cite{Fo} on the sharp Strichartz inequalities for the Schr\"{o}dinger equation in dimensions $d\in\{1,2\}$ and for the wave equation in dimensions $d\in \{2,3\}$.}

\smallskip

{Returning to the sphere,  it is conjectured that constant functions maximize the adjoint Fourier restriction inequality \eqref{20210524_16:43} for all $q \geq 2(d+1)/(d-1)$; see \cite[\S 3.1]{FOS17}. So far, this claim has only been established in a few cases, all in low dimensions: in the Stein--Tomas endpoint case $q=4$ and $d=3$ by Foschi \cite{Fo15}; in the cases $q=4$ and $d \in \{4,5,6,7\}$ by Carneiro and Oliveira e Silva \cite{COS15}; and, more recently, in the cases $q=2k$ with $k\geq 3$ an integer, and $d \in \{3,4,5,6,7\}$, by Oliveira e Silva and Quilodr\'{a}n \cite{OSQu2}. Other works related to the sharp adjoint Fourier restriction to the sphere include \cite{BZKTh, CFOST, COSS2, ChSh1, ChSh2, FLS, GoN, OSQu3, OST, OSTZK, Sh16}. A non-exhaustive list of works in sharp Fourier restriction theory includes \cite{BBCH, Car1, ChQu, Fo, Go2, Go3, HZ, Ku, OT98, Sh09a} for the paraboloid (Schr\"odinger equation), \cite{BJO16, BR, Fo, Ne, Qu13, Ram} for the cone (wave equation), \cite{COSS1, COSSS, Jeavons14, OzR14, Qu1} for the hyperboloid (Klein--Gordon equation), and \cite{BV20, BBI15, BrOSQu, CarOlSo, DMPS, FVV, HS12, OS14, OSQu18, OSQu, Sh09b} in other related settings. We refer the reader to the surveys \cite{FOS17, NOST23} for a more detailed account on the latest developments.}

\smallskip

One can place inequality \eqref{20210524_16:43} within a larger program, via the following weighted setup. Given a bounded function $h:\R^d \to \C$, which functions maximize the weighted adjoint Fourier restriction inequality 
\begin{equation}\label{20210524_17:54*}
    \left| \int_{\R^d} \big|\widehat{f\sigma}(x)\big|^q\,h(x)\,\d x\right|^{1/q} \leq {\bf C} \, \|f\|_{L^2(\sph{d-1})}\,\, ?
    \end{equation}
{We remark that inequality \eqref{20210524_17:54*} might hold in a larger range of $q$ than in \eqref{20210524_16:43} provided that the weight $h$ introduces additional decay. Examples of such situations, with radial weights $h$ and exponent $q=2$, have been considered in \cite{BRV, BMS, BSS}. In the present paper we take a different direction, studying the stability of the (known) maximizers of \eqref{20210524_16:43} in the regime where $h$ is close to ${\bf 1}$.}
    
\smallskip

Our terminology here is the usual one: the value of the optimal constant in the inequality \eqref{20210524_17:54*} is 
\begin{equation}\label{20210709_22:16}
{\bf C} = \displaystyle\sup_{{\bf 0} \neq f \in L^2(\sph{d-1})} \frac{  \left| \,\int_{\R^d} \big|\widehat{f\sigma}(x)\big|^q\,h(x)\,\d x\,\right|^{1/q} }{\|f\|_{L^2(\sph{d-1})}}\,,
\end{equation}
and a {\it maximizer} is a function $f \in L^2(\sph{d-1})\setminus\{{\bf 0}\}$ that realizes the supremum on the right-hand side of \eqref{20210709_22:16}. Throughout the paper, we work with the exponent $q=4$ in dimensions $d \in \{3,4,5,6,7\}$, setting
$$h = {\bf 1} + g,$$
with $g \in L^{\infty}(\R^d)$. We are then interested in the sharp form of the inequality
\begin{equation}\label{20210524_17:54}
    \left| \int_{\R^d} \big|\widehat{f\sigma}(x)\big|^4\,h(x)\,\d x\right|^{1/4} \leq {\bf C} \, \|f\|_{L^2(\sph{d-1})}.
    \end{equation}
Recall that when $g = {\bf 0}$ and $d \in \{3,4,5,6,7\}$, the constant functions maximize \eqref{20210524_17:54}. Heuristically it is expected that, if $g$ is sufficiently small, then the constant functions should come close to realizing equality in \eqref{20210524_17:54} in dimensions $d \in \{3,4,5,6,7\}$. We refine this stability statement and prove that, if the perturbation $g$ is sufficiently {\it regular} and {\it small}, as properly described below, then the constant functions continue to be maximizers of \eqref{20210524_17:54} (and, generically, they are the {\it unique} maximizers). This is the content of Theorems \ref{Thm1}, \ref{Thm1b} and \ref{Thm2} below. We note that, although it may seem more natural to consider non-negative weights $h$ in \eqref{20210524_17:54}, our methods do not require this assumption, and the weight $h$ is free to exhibit sign oscillations; we  present an explicit example in the Remark after the statement of Proposition~\ref{prop:tildeLd}.

\smallskip

Our notation for a multi-index is standard, letting $\alpha = (\alpha_1, \alpha_2, \ldots, \alpha_d)$ with each $\alpha_i \in \Z_{\geq 0}$ ($1 \leq i \leq d$), and writing $|\alpha| := \alpha_1 + \ldots + \alpha_d$ and $\partial^{\alpha}g:= \partial^{\alpha_1}_{x_1} \ldots \partial^{\alpha_d}_{x_d}g$. Throughout the paper we assume the following regularity condition on the perturbation $g:\R^d \to \C$:
\begin{itemize}
\item[(R1)] $g \in L^2(\R^d) \cap L^{\infty}(\R^d)$ and its Fourier transform $\widehat{g}$ is {\it radial} and {\it non-negative} on the closed ball $\overline{B_4}  = \{\xi \in \R^d \ ; \ |\xi|\leq 4\}$.
\end{itemize}
There is a second regularity condition in our study, which is related to the smoothness of $\widehat{g}\big|_{\overline{B_4}}$. Here we consider two cases of interest: 
\begin{itemize}
\item[(R2.A)] (Analytic version)  $\widehat{g}\big|_{\overline{B_4}}: \overline{B_4} \to \R$ admits an analytic continuation, {which we call $\mathcal{G}$}, to an open disk in $\C^d$ containing the closed disk $\overline{\mc{D}_{R}} = \{z \in \C^d\ ; \  |z| \leq R\}$ for some radius $R >4$.
\smallskip
\item[(R2.C)]($C^k$-version) $\widehat{g}\big|_{\overline{B_4}}: \overline{B_4} \to \R$ belongs to $C^k(B_4) \cap C^0(\overline{B_4})$, with bounded partial derivatives of order up to $k$ in $B_4$, where\footnote{Recall that $\lfloor x\rfloor$ denotes the greatest integer that is less than or equal to $x$.} 
$$k = k(d) = \lfloor (d+3)/2 \rfloor.$$
\end{itemize}
\noindent {\sc Remark}: In sympathy with \eqref{20210526_14:45}, our normalization for the Fourier transform in $\R^d$ is 
\begin{equation}\label{eq:FTdef}
\widehat{g}(\xi) = \int_{\R^d} e^{i \xi \cdot x} \,g(x)\,\dx.
\end{equation}
\noindent {\sc Remark}: In this paper we shall always work under conditions $(\rm{R1})$ and $({\rm R2.A})$, or under conditions $(\rm{R1})$ and $({\rm R2.C})$. The portion of the distributional Fourier transform $\widehat{g}$ outside $\overline{B_4}$ has no effect on the integral on the left-hand side of \eqref{20210524_17:54} (see \eqref{eq:Q_h_decomp} below) and we can assume without loss of generality that ${\rm supp}(\widehat{g}) \subset \overline{B_4}$. Hence, by Fourier inversion, we may assume when convenient that $g$ itself is radial, real-valued, smooth, and that $g$ and all of its partial derivatives belong to $L^2(\R^d) \cap L^{\infty}(\R^d)$.

\smallskip

Our main results are the following.
{\begin{theorem}[Sharp weighted adjoint Fourier restriction: analytic version]\label{Thm1}
Let $g:\R^d \to \C$ be a function verifying the regularity conditions $(\rm{R1})$ and $({\rm R2.A})$ above, and set $h = {\bf 1} + g$. Let $R>4$ and $\mathcal{G}:\overline{\mc{D}_{R}} \to \C$ be as in condition $({\rm R2.A})$. Then, for each $d \in \{3,4,5,6,7\}$, there exists a positive constant $\mc{A}_{d,R}$ such that the following holds: if 
\begin{equation}\label{20210525_09:45}
\max_{z \in \overline{ \mc{D}_{R}}} \big| \mathcal{G}(z)\big| < \mc{A}_{d,R}\,,
\end{equation}
then the constant functions are maximizers of the weighted adjoint Fourier restriction inequality \eqref{20210524_17:54}. Our constant $\mc{A}_{d,R}$ is effective, given by \eqref{20210729_13:07}. For instance, when $R=5$, we have
$$\mc{A}_{3,5} = {6.49\ldots} \ ; \ \mc{A}_{4,5} = {90.10\ldots} \ ; \ \mc{A}_{5,5} = {363.29\ldots} \ ; \ \mc{A}_{6,5} = {1092.17\ldots} \ ; \ \mc{A}_{7,5} = {2131.26\ldots}\ .$$
Moreover, the limit $\mc{L}_d := \lim_{R \to \infty} \frac{\mc{A}_{d,R}}{R^2}$ exists and is given by \eqref{20210729_14:34}, corresponding to
\begin{equation}\label{20210729_14:40}
\mc{L}_3 = 1.82\ldots \ ; \ \mc{L}_4 =24.55\ldots  \ ; \ \mc{L}_5=98.59\ldots  \ ; \  \mc{L}_6 = 296.25\ldots \ ; \ \mc{L}_7 = 579.20\ldots\ .
\end{equation}
\end{theorem}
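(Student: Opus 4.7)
The plan is to apply Plancherel's identity to rewrite the weighted fourth moment as a quadratic form in the Hermitian autocorrelation
\begin{equation*}
\kappa_f(y):=\int_{\sph{d-1}\times\sph{d-1}}f(\omega_1)\,\overline{f(\omega_2)}\,\delta\bigl(y-(\omega_1-\omega_2)\bigr)\,\d\sigma(\omega_1)\,\d\sigma(\omega_2),
\end{equation*}
which is supported in $\overline{B_2}\subset\R^d$ and satisfies $\widehat{\kappa_f}=|\widehat{f\sigma}|^{2}$. Since $|\widehat{f\sigma}|^{4}=\widehat{\kappa_f\ast\kappa_f}$ and $\kappa_f\ast\kappa_f$ is supported in $\overline{B_4}$, a Parseval--Fubini argument yields the decomposition
\begin{equation*}
\int_{\R^d}\bigl|\widehat{f\sigma}(x)\bigr|^{4}\bigl(1+g(x)\bigr)\,\d x\;=\;(2\pi)^{d}\,\|\kappa_f\|_{L^{2}(\R^d)}^{2}\;+\;\int_{\overline{B_4}}(\kappa_f\ast\kappa_f)(\xi)\,\widehat{g}(\xi)\,\d\xi,
\end{equation*}
so that hypothesis $({\rm R3})$ guarantees the perturbation is nonnegative at the constant function (where $\kappa_f\ast\kappa_f\geq 0$), pinning down its role. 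The first summand is bounded by Foschi/Carneiro--Oliveira e Silva: $(2\pi)^{d}\|\kappa_f\|_{L^{2}}^{2}\leq \mathbf{C}_{0,d}^{\,4}\|f\|_{L^{2}(\sph{d-1})}^{4}$, with equality precisely on the known unperturbed maximizer set $\mathcal{M}_d$ (constants, together with distinguished first-degree configurations for $d\in\{4,5,6,7\}$).

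Next, decompose $f=\sum_{n\geq 0}f_n$ into spherical harmonics and invoke the Funk--Hecke formula, which represents $\widehat{f_n\sigma}$ as a Bessel radial factor times the angular part $f_n$. The radiality of $\widehat{g}$ then reduces the perturbation integral to a quadratic form in the spherical-harmonic coefficients of $f$, whose matrix entries are explicit integrals of Gegenbauer polynomials against $\widehat{g}$. Using the analyticity hypothesis $({\rm R4.A})$, expand the radial $\widehat{g}(\xi)=\sum_{n\geq 0}\phi_n|\xi|^{2n}$ and apply Cauchy-type estimates $|\phi_n|\,R^{2n}\leq \max_{\overline{\mc{D}_R}}|\mathcal{G}|$. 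Substituting, each coefficient of the quadratic form is dominated by $\max_{\overline{\mc{D}_R}}|\mathcal{G}|$ multiplied by an explicit, geometrically decaying factor in $R^{-2n}$ depending on $d$ and $n$.

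The final step balances the perturbation against the Foschi/COS spectral gap $\mathbf{C}_{0,d}^{\,4}\|f\|_{L^{2}(\sph{d-1})}^{4}-(2\pi)^{d}\|\kappa_f\|_{L^{2}}^{2}\geq 0$, which vanishes only on $\mathcal{M}_d$ and controls the off-maximizer components of $f$ within each spherical-harmonic subspace. The threshold $\mc{A}_{d,R}$ arises as the infimum, taken over non-trivial off-maximizer directions, of the ratio between the spectral-gap coefficient and the perturbation coefficient computed above; the asymptotic $\mc{L}_d=\lim_{R\to\infty}\mc{A}_{d,R}/R^{2}$ reflects that, as $R\to\infty$, only the $|\xi|^{2}$-coefficient $\phi_1$ of $\widehat{g}$ contributes to leading order, all higher terms being suppressed by $R^{-2n}$ decay. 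The main obstacle is the quantitative bookkeeping: one needs effective lower bounds on the Foschi/COS spectral gap within each spherical-harmonic subspace, derived from sharp Gegenbauer-polynomial identities underlying the unperturbed analysis, matched with explicit upper bounds on the Gegenbauer moments of $\widehat{g}$, and then combined into a clean infimum yielding the numerical values of $\mc{A}_{d,R}$ and $\mc{L}_d$ recorded in the statement.
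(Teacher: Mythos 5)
Your proposal shares some surface features with the paper's argument (spherical harmonic decomposition, Funk--Hecke, Gegenbauer polynomials, Cauchy-type bounds exploiting the analyticity of $\widehat g$), but the overall strategy is genuinely different and, as sketched, has two serious gaps that the paper's argument is specifically designed to circumvent.

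First, the ``spectral gap'' you invoke, namely $\mathbf{C}_{0,d}^{4}\|f\|_{L^2}^{4}-(2\pi)^d\|\kappa_f\|_{L^2}^{2}\geq 0$, vanishes not only on constants but on the entire unperturbed maximizer manifold $\{c\,e^{iy\cdot\omega}:c\in\C\setminus\{0\},\,y\in\R^d\}$ (your description of this set as ``constants together with distinguished first-degree configurations for $d\in\{4,5,6,7\}$'' is incorrect; the precise characterization is in \cite{COS15,Fo15}). The perturbation term $\int_{\overline{B_4}}(\kappa_f\ast\kappa_f)\,\widehat g$ does not vanish in the tangential directions $y\neq 0$, so your proposed ``infimum over off-maximizer directions of the gap-to-perturbation ratio'' is not manifestly positive; one would need a delicate second-variation analysis showing the perturbation contracts at least as fast as the gap opens along these degenerate modes, and you have not addressed this. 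The paper sidesteps the entire issue by a symmetrization step (Lemmas \ref{Lem2_non-negative} and \ref{Lem3_even}) that reduces the problem to \emph{non-negative, even} competitors, eliminating the character directions at the outset.

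Second, and more fundamentally, you do not address how to pass from a quartic functional of $f$ to a tractable quadratic one. The paper points out explicitly (beginning of Section 3) that a direct Cauchy--Schwarz on the quadrilinear form is fatal because the two-fold convolution $\sigma\ast\sigma$ blows up like $|x|^{-1}$ near the origin, making the resulting bilinear kernel unbounded. Foschi's ``magical identity'' \eqref{20210525_11:49} --- and, in the perturbed setting, its replacement \eqref{20210527_12:46}, which the authors generate systematically via the Helmholtz equation $\Delta u+u=0$ and integration by parts (Proposition \ref{prop:magic_id}) --- is precisely what tames this singularity before Cauchy--Schwarz is applied. This is the crux of the argument, the reason conditions (R2) appear, and the source of the specific kernel $K_h$ in \eqref{20210528_11:33} whose Gegenbauer coefficients are then estimated. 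Your sketch keeps the quartic structure and hopes the unperturbed gap absorbs the perturbation, so the singularity never surfaces in your outline, but neither does a route to the explicit constants $\mc{A}_{d,R}$ or the claimed limit $\mc{L}_d$; those numbers come out of the paper's very concrete bound \eqref{20210603_12:37} on $|\lambda_{d,g}(2\ell)|$ via Wang's Bernstein-ellipse estimate (Lemma \ref{Lem_Wang}), compared term-by-term against the explicit negative eigenvalues \eqref{20210601_09:57}--\eqref{20210601_09:61} of the unperturbed kernel, not from a gap-versus-perturbation infimum.

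In short: the spherical-harmonic/Gegenbauer/Cauchy-estimate machinery you describe does appear in the paper, but as a tool for analyzing the bilinear form that arises \emph{after} symmetrization and the magical-identity-plus-Cauchy--Schwarz step, not as a substitute for them. Without those two preliminary reductions, your proposal has no mechanism to handle either the degenerate maximizer modes or the convolution singularity, and the quantitative bookkeeping you defer to would not close.
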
}

\smallskip

\noindent {{\sc Remark:} As an example, Theorem \ref{Thm1} can be applied to the situation when $g$ verifies $(\rm{R1})$ and $\widehat{g}(\xi) = a + b|\xi|^2$ for $\xi \in \overline{B_4}$, with $|b| < \mc{L}_d$ (see \S \ref{quadweights} below for further discussion of this particular case). In fact, given \eqref{20210729_14:40}, one can choose $R$ large enough so that \eqref{20210525_09:45} holds, where the analytic continuation is $\mathcal{G}(z) = a + b(z_1^2 + \ldots  + z_d^2)$ with $z = (z_1, z_2, \ldots, z_d) \in \C^d$. We emphasize the fact that $\mathcal{G}$ only needs to be an analytic continuation of $\widehat{g}\big|_{\overline{B_4}}$, and not of $\widehat{g}$ itself. It may be the case that $\mc{G} \neq \widehat{g}$ in $B_R \setminus \overline{B_4}$.

{\begin{theorem}[Sharp weighted adjoint Fourier restriction: $C^k$-version]\label{Thm1b} Let $g:\R^d \to \C$ be a function verifying the regularity conditions $(\rm{R1})$ and $({\rm R2.C})$ above, and set $h = {\bf 1} + g$. Then, for each $d \in \{3,4,5,6,7\}$, there exists a positive constant $\mc{C}_{d}$ such that the following holds: if 
\begin{equation}\label{20210710_19:14}
  {\sup_{\xi \in {B}_{4}} \big| \partial^{\alpha}\widehat{g}(\xi)\big| < \mc{C}_{d}\,,}
 \end{equation}
for any multi-index $\alpha \in\Z_{\geq 0}^d$ of the form $\alpha = (\alpha_1, 0, 0 , \ldots, 0)$, with $0 \leq \alpha_1 \leq k(d)$,
then the constant functions are maximizers of the weighted adjoint Fourier restriction inequality \eqref{20210524_17:54}. Our constant $\mc{C}_{d}$ is effective, given by \eqref{20210715_18:58}, corresponding to
\begin{align*}
{\mc{C}_3 = 0.157\ldots \ \ ; \ \ \mc{C}_4= 0.918\ldots \ \ ; \ \ \mc{C}_5 = 0.908\ldots \ \ ; \ \ \mc{C}_6 = 1.099\ldots\ \ ; \ \ \mc{C}_7 = 0.534\ldots\ .}
\end{align*}
\end{theorem}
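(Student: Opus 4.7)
The proof parallels that of Theorem~\ref{Thm1}, with the convergent power-series representation of $\widehat{g}|_{B_4}$ used there replaced by a truncated Taylor expansion of degree $k(d)-1$ equipped with an effectively controlled Lagrange remainder. The starting point is a Parseval-type reduction: setting $v = (f\sigma) \ast (\widetilde{f\sigma})$ so that $\widehat{v} = |\widehat{f\sigma}|^2$, one obtains
\[
\int_{\R^d} |\widehat{f\sigma}(x)|^4\, g(x)\, \dx \;=\; \int_{\overline{B_4}} (v \ast \tilde v)(\xi)\, \widehat{g}(\xi)\, \d\xi,
\]
where the support containment $\supp(v \ast \tilde v) \subset \overline{B_4}$ comes from $\supp v \subset \overline{B_2}$. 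Thus only the values of $\widehat{g}$ on $\overline{B_4}$ matter, which is precisely where hypotheses~(R3) and~(R4.C) give control.

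Next, I would exploit radiality and the $C^{k(d)}$-regularity of $\widehat{g}$ to Taylor-expand its radial profile $F(r) := \widehat{g}(r e_1)$ at $r = 0$. Since $F$ extends as an even function of $r$, its odd-order derivatives at $0$ vanish, and Lagrange's form of Taylor's theorem yields, for every $\xi \in \overline{B_4}$,
\[
\widehat{g}(\xi) \;=\; \sum_{j=0}^{\lfloor (k(d)-1)/2 \rfloor} \frac{F^{(2j)}(0)}{(2j)!}\, |\xi|^{2j} \;+\; R(\xi), \qquad |R(\xi)| \leq \frac{|\xi|^{k(d)}}{k(d)!}\, \max_{\overline{B_4}} |\partial^{k(d)}_{x_1} \widehat{g}|.
\]
Each Taylor coefficient $F^{(2j)}(0) = \partial^{2j}_{x_1}\widehat{g}(0)$ is controlled directly by hypothesis~\eqref{20210710_19:14}, and the quantity appearing in the remainder bound is also one of those controlled by~\eqref{20210710_19:14}.

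For each polynomial term $|\xi|^{2j}$, the identity
\[
\int_{\overline{B_4}}(v\ast\tilde v)(\xi)\, |\xi|^{2j}\,\d\xi \;=\; \iiiint_{(\sph{d-1})^4} |\omega_1 + \omega_2 - \omega_3 - \omega_4|^{2j}\, f(\omega_1)f(\omega_2)\overline{f(\omega_3)f(\omega_4)} \,\d\sigma(\omega_1)\cdots \d\sigma(\omega_4)
\]
reduces the problem to the same quartic forms in $f$ that appear in the proof of Theorem~\ref{Thm1}. These quartic forms are bounded against $\|f\|_{L^2(\sigma)}^4$ with explicit $(j, d)$-dependent constants via Funk--Hecke and Gegenbauer expansions; those bounds can be recycled verbatim from the analytic case. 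For the remainder contribution I would use positive-definiteness: $v\ast\tilde v$ is the inverse Fourier transform of $|\widehat{f\sigma}|^4 \geq 0$, so $\|v\ast\tilde v\|_{L^\infty} = (v\ast\tilde v)(0) = (2\pi)^{-d}\|\widehat{f\sigma}\|^4_{L^4}$. Combined with the sharp Foschi--COS bound and the support containment in $\overline{B_4}$, this yields
\[
\left|\int_{\overline{B_4}} (v\ast\tilde v)(\xi)\, R(\xi)\,\d\xi\right| \;\leq\; |B_4|\, \|R\|_{L^\infty(\overline{B_4})}\, (2\pi)^{-d}\, \mathbf{C}_0^4\, \|f\|_{L^2(\sigma)}^4.
\]

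The main quantitative obstacle is balancing the polynomial bounds (whose constants grow with $j$, reflecting the growth of the moments $|\omega_1+\omega_2-\omega_3-\omega_4|^{2j}$ on the sphere) against the remainder estimate, which demands $k(d)$ large enough that the geometric factor $4^{k(d)}/k(d)!$ is small compared to the spectral gap of the unperturbed operator. The specific threshold $k(d) = \lfloor (d+3)/2\rfloor$ is the minimal regularity for which these two contributions, combined with~\eqref{20210710_19:14}, preserve $\int|\widehat{f\sigma}|^4 h\,\dx \leq \mathbf{C}^4\|f\|_{L^2(\sigma)}^4$ with equality only at constants, producing the explicit values of $\mathcal{C}_d$ in~\eqref{20210715_18:58}. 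Preserving strict inequality (hence uniqueness of the constant maximizer) further requires the perturbation to remain strictly below this spectral gap, not merely match it.
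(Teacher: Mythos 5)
Your opening Parseval reduction is the same as the paper's \eqref{20210717_00:34}, and the observation that only $\widehat g|_{\overline{B_4}}$ matters is correct. After that, however, your route diverges substantially and hits serious obstructions.

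First, you omit the magical-identity step (Proposition~\ref{prop:magic_id}), which is the structural heart of the argument. If one applies Cauchy--Schwarz directly to the quartic form \eqref{20210526_17:05}, the resulting quadratic form is \emph{unbounded}, because $(\sigma*\sigma)(x)\simeq 1/|x|$ near the origin; the paper notes this explicitly. The Helmholtz-based integration by parts in Proposition~\ref{prop:magic_id} inserts the non-negative factor \eqref{eq:NonNeg}, which vanishes precisely on the singular set, and only after that does the Cauchy--Schwarz step produce a bounded kernel $K_h$. Your Taylor expansion of $\widehat g$ leaves those quartic integrals $\int_{(\sph{d-1})^4}|\omega_1+\omega_2-\omega_3-\omega_4|^{2j}\prod f$ in play without this mitigation; the assertion that they are ``the same quartic forms that appear in Theorem~\ref{Thm1}'' and that ``those bounds can be recycled verbatim'' is inaccurate: in the analytic case the paper does not Taylor-expand $\widehat g$ at all, it Gegenbauer-expands the derived kernel $K_g$ and invokes Wang's estimate, which is a different object.

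Second, bounding the polynomial contributions and the Lagrange remainder by crude $L^\infty$-type estimates of the form $|B_4|\,\|R\|_{L^\infty}\,\mathbf C_0^4\,\|f\|^4_{L^2}$ cannot prove that constants are \emph{maximizers}. The theorem is not a bound; it is an extremizer identification. One must compare $\int|\widehat{f\sigma}|^4h$ with its value at $f=\mathbf 1$, and a one-sided absolute bound on the remainder wastes an amount that is not attained at $f=\mathbf 1$, so it cannot close the comparison. The paper's proof is a genuine spectral argument: it shows the Funk--Hecke eigenvalues satisfy $\lambda_{d,h}(0)>0$ and $\lambda_{d,h}(2\ell)<0$ for all $\ell\geq1$, from which the extremality of constants (and the characterization of equality) follows by orthogonality. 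To achieve the sign condition, the paper proves a general decay estimate for Gegenbauer coefficients of a $C^k$ kernel (Lemma~\ref{Lem_Wang_Sub}, by $k$-fold integration by parts using \cite[Eq.\@ 22.13.2]{AS}), transfers the hypothesis (R4.C) on $\widehat g$ to $C^k$-regularity of $K_g$ via the explicit change of variables $t\mapsto s=(2+2t)^{1/2}$ (Lemmas~\ref{Lem12_July13}--\ref{Lem14_July14}), and then compares with the quantitative gap in Lemma~\ref{Lem_previous_asymp}. The threshold $k(d)=\lfloor(d+3)/2\rfloor$ is dictated by requiring the Gegenbauer-coefficient decay rate $n^{-2k+\cdots}$ coming from $k$ integrations by parts to beat the gap $\lambda_{d,\mathbf 1}(n)\lesssim -n^{-d}$ (see \eqref{20210805_10:57}), not by the size of $4^{k}/k!$ against any numerical gap; your heuristic for $k(d)$ therefore does not match the mechanism at work.
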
}

One readily notices that we put some effort into making the main {results} not only qualitative but also quantitative. {In general}, we shall see that the $C^k$-condition (R2.C) corresponds to the minimal regularity required on $\widehat{g}$ in order to achieve our goal. Nevertheless, we decided {to state} the slightly more restrictive analytic version {(Theorem \ref{Thm1})} separately since it is already a fruitful source of examples, with a touch of simplicity in its statement and different insights within the course of its independent proof. {Moreover, there are situations, when both Theorems \ref{Thm1} and \ref{Thm1b} are available, in which the bounds coming from Theorem \ref{Thm1} are strictly superior (for instance,  as in the remark after Theorem \ref{Thm1}, where $\widehat{g}\big|_{\overline{B_4}}$ is a polynomial of low degree in the variable $|\xi|^2$). In particular, it is not the case that Theorem \ref{Thm1} follows from Theorem \ref{Thm1b}.}

\smallskip

{Theorems \ref{Thm1} and \ref{Thm1b}} are complemented by the following classification result.
\begin{theorem}[Full classification of maximizers] \label{Thm2}
Under the hypotheses of Theorem \ref{Thm1} {or Theorem \ref{Thm1b}:}
\begin{itemize}
\item[(i)] If $\widehat{g}= \bf{0}$ on the ball $\overline{B_4}$, then the complex-valued maximizers $f \in L^2(\sph{d-1})$ of \eqref{20210524_17:54} are given by $f(\omega) = c\,e^{iy \cdot  \omega}$, where $y \in \R^d$ and $c \in \C \setminus \{0\}$.
\smallskip
\item[(ii)] If $\widehat{g}\neq \bf{0}$ on the ball $\overline{B_4}$, then the constant functions are the unique complex-valued maximizers of \eqref{20210524_17:54}.
\end{itemize}
\end{theorem}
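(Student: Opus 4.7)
The first ingredient is a Plancherel-type rewriting of the perturbation integral. Setting $F := (f\sigma)\ast(\widetilde{f}\sigma)$ with $\widetilde{f}(\omega):=\overline{f(-\omega)}$, so that $\widehat{F}=|\widehat{f\sigma}|^2$ and $\widehat{F\ast F}=|\widehat{f\sigma}|^4$, the standard pairing $\int\widehat{u}\,g=\int u\,\widehat{g}$ yields
\[
\int_{\R^d}|\widehat{f\sigma}(x)|^4\, g(x)\,\dx \;=\; \int_{\R^d}(F\ast F)(\xi)\,\widehat{g}(\xi)\,\d\xi.
\]
Since $f\sigma$ is supported on $\sph{d-1}$, the distribution $F\ast F$ is supported in $\overline{B_4}$, and only $\widehat{g}|_{\overline{B_4}}$ contributes to the perturbation. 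This already settles part (i): if $\widehat{g}\equiv 0$ on $\overline{B_4}$, the perturbation vanishes identically, the functional coincides with the unperturbed sharp restriction functional, and the claimed classification of complex-valued maximizers then follows from the results of Foschi and Carneiro--Oliveira e Silva cited in the introduction.

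For part (ii), my plan has two steps. First, I would inspect the argument in the proof of Theorems \ref{Thm1}/\ref{Thm1b} and upgrade it to a stability statement of the form
\[
\Phi(f) \;\leq\; \Phi(\mathbf{1})\bigg(\frac{\|f\|_{L^2(\sph{d-1})}}{\|\mathbf{1}\|_{L^2(\sph{d-1})}}\bigg)^{\!4} - (1-\epsilon_d)\,\mathrm{Def}(f),
\]
where $\Phi(f):=\int_{\R^d}|\widehat{f\sigma}|^4 h\,\dx$, $\mathrm{Def}(f)\geq 0$ is the deficit inherited from the Foschi--COS15 sharp inequality and vanishes precisely when $f(\omega)=c\,e^{iy\cdot\omega}$, and $\epsilon_d<1$ under the smallness conditions \eqref{20210525_09:45} or \eqref{20210710_19:14}. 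Any maximizer would then automatically satisfy $\mathrm{Def}(f)=0$, forcing $f$ to be a modulated constant. I expect this to be the main difficulty of the theorem: the proofs of \ref{Thm1}/\ref{Thm1b} supply the strict deficit only implicitly, and extracting the rigidity requires tracking the equality case through the spherical-harmonic/Gegenbauer expansion that underlies those proofs and checking that all non-constant harmonics of $f$ must vanish.

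It remains to identify which among the modulated constants are genuine maximizers. Normalizing so that $\|f\|_{L^2(\sph{d-1})}=\|\mathbf{1}\|_{L^2(\sph{d-1})}$ gives $|c|=1$. Using the translation formula $\widehat{f\sigma}(x)=c\,\widehat{\sigma}(x+y)$, the Plancherel identity above specializes to
\[
\int_{\R^d}|\widehat{f\sigma}|^4\,g\,\dx \;=\; \int_{\R^d}\varphi(\xi)\,e^{iy\cdot\xi}\,\d\xi, \qquad \varphi(\xi):=(F_0\ast F_0)(\xi)\,\widehat{g}(\xi),
\]
with $F_0:=\sigma\ast\sigma$. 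By (R1) the function $\widehat{g}$ is radial, so $\varphi$ is radial and in particular even; by (R3), the non-vanishing assumption in (ii), and continuity of $\widehat{g}$, combined with the fact that $F_0\ast F_0$ is continuous and strictly positive on the open ball $B_4$, the function $\varphi$ is integrable, nonnegative, and strictly positive on a nonempty open subset of $B_4$. Hence, for every $y\neq 0$,
\[
\widehat{\varphi}(0)-\widehat{\varphi}(y)\;=\;\int_{\R^d}\bigl(1-\cos(y\cdot\xi)\bigr)\,\varphi(\xi)\,\d\xi\;>\;0,
\]
the imaginary contribution vanishing by evenness of $\varphi$. Since the unperturbed part $\int|\widehat{f\sigma}|^4\dx$ is invariant under such modulations, this gives $\Phi(c\,e^{iy\cdot\omega})<\Phi(\mathbf{1})$ for every $y\neq 0$, so the only maximizers are the constants, as required.
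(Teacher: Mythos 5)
Your part (i) and the concluding step of part (ii) coincide with the paper's proof: part (i) is the same support observation, and your Plancherel rewriting $\int|\widehat{f\sigma}|^4\,g=\int(F\ast F)\,\widehat g$ followed by the $1-\cos$ positivity argument for $\varphi=(\sigma\ast\sigma\ast\sigma\ast\sigma)\,\widehat g$ is, up to notation, the paper's computation of $\widehat G(z)=(2\pi)^d\,\widehat g(z)(\sigma\ast\sigma\ast\sigma\ast\sigma)(z)$. One small point there: you only establish $\Phi(c\,e^{iy\cdot\omega})<\Phi(\mathbf 1)$, but since the quantity in \eqref{20210709_22:16} is $|\Phi(f)|^{1/4}$ one must also rule out $\Phi(f)=-\Phi(\mathbf 1)$; the paper does this by running the argument with both signs, showing $\int\widehat G(z)\bigl(1\pm\cos(y\cdot z)\bigr)\,\d z>0$, which yields the two-sided bound $|G(y)|<G(0)$.

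The genuine gap is the reduction to modulated characters, which you leave as a sketch, and the sketch misidentifies where the rigidity comes from. The Funk--Hecke/Gegenbauer analysis of Proposition~\ref{Prop5} is applied to $\varphi=|f|_\sharp^2$, not to $f$; its equality case forces the nonconstant spherical harmonics of $|f|_\sharp^2$ to vanish, i.e.\ $|f|_\sharp$ (hence $|f|$, via equality in Lemma~\ref{Lem3_even}) is constant. "All nonconstant harmonics of $f$ must vanish" would make $f$ itself constant, which is both stronger than what Proposition~\ref{Prop5} yields and inconsistent with your own stated equality set for the deficit (modulated characters $c\,e^{iy\cdot\omega}$ have nontrivial spherical harmonics). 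The missing ingredient is the equality case of Lemma~\ref{Lem2_non-negative}: the necessary condition $\|f\sigma\ast f\sigma\|_{L^2}=\||f|\sigma\ast|f|\sigma\|_{L^2}$ produces, via \cite[Lemma~8]{COS15}, the functional equation $f(\omega_1)f(\omega_2)=\Phi(\omega_1+\omega_2)\,|f(\omega_1)f(\omega_2)|$ a.e., which combined with the constancy of $|f|$ and \cite[Theorem~4]{COS15} forces $f(\omega)=c\,e^{\zeta\cdot\omega}$ and then $\zeta=iy$. The paper gets all of this by tracking equality through the chain \eqref{eq:big_chain_ineqs}; any deficit functional you construct must therefore encode all three equality cases (the nonnegative rearrangement, the antipodal rearrangement, and the Gegenbauer step), not the last one alone.
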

In the unperturbed setting (i.e. $g =  \bf{0}$), the conclusion of Theorem \ref{Thm2} (i) was established in \cite{COS15, Fo15}, and we just record here for the convenience of the reader that this continues to hold when $\widehat{g}= \bf{0}$ on the ball $\overline{B_4}$, by a simple orthogonality argument. The novelty in the classification above occurs in the broad situation of Theorem \ref{Thm2} (ii), where general complex characters $e^{iy\cdot \omega}$, $y\in\R^d \setminus\{0\}$, {\it do not} maximize \eqref{20210524_17:54}, in contrast to the previous case. This is ultimately due to the modulation/translation symmetry of the extension operator, $\widehat{(e^{iy\cdot} f)\sigma}=\widehat{f \sigma}(\cdot+y)$, which is naturally incompatible with the assumed radiality of {$\widehat{g}\big|_{\overline{B_4}}$}. 

\smallskip

There is a myriad of examples of perturbations $g$ that would fit into our framework. {The most naive one is perhaps the Gaussian $\widehat g(\xi) = c\,e^{- {|\xi|^2}/2}$ (so that, in our normalization, $g(x)=c\,(2\pi)^{-d/2}\,e^{-{|x|^2}/2}$), provided $c>0$ is sufficiently small. In this situation, Theorem \ref{Thm1b} typically provides a better bound than Theorem \ref{Thm1} for the admissible range of the parameter $c$, due to the growth of Gaussians along the imaginary axis. A concrete choice which falls within the scope of Theorem \ref{Thm1b} in every dimension $d \in \{3,4,5,6,7\}$ is $c = \frac{1}{11}$.}

\smallskip

More generally, a particularly simple family for the analytic setting of {Theorem \ref{Thm1}} is given by $\widehat{g}(\xi) = P(|\xi|^2)\int_0^{\infty}  e^{-\lambda  |\xi|^2/2} \d\mu(\lambda)$, where $P$ is an appropriate meromorphic function (e.g. a polynomial) and $\mu$ is a suitable non-negative measure on $(0,\infty)$. For instance, one could take $\widehat{g}(\xi) = c \, e^{- |\xi|^2/2} / (36 + |\xi|^2)$, for some sufficiently small constant $c>0$. Observe that this particular $\widehat{g}$ admits an analytic continuation to the open disk $\mc{D}_6$ but not beyond that. Such examples are prototypical of the analytic case in the following sense: {the fact that $\widehat{g}$ is a radial function on $B_4 \subset \R^d$ which admits an analytic continuation $\mc{G}$ to the disk $\mc{D}_4 \subset \C^d$ is equivalent to the existence of a representation of the form
$$\widehat{g}(\xi_1, \xi_2, \ldots, \xi_d) = G\big((\xi_1^2 + \xi_2^2 +\ldots + \xi_d^2)^{1/2}\big)$$
for $\xi = (\xi_1, \xi_2, \ldots, \xi_d) \in B_{4} \subset \R^d$, where $G:\mc{D}_{4} \subset\C \to \C$ is an even analytic function of one complex variable. In fact, given such conditions on $\widehat{g}$, then $G(z)$ is simply $\mc{G}(z, 0, \ldots, 0)$ for $z \in \mc{D}_{4}$. Conversely, given $G: \mc{D}_{4} \subset \C \to \C$ even and analytic, we can write $G(z)= \sum_{\ell=0}^{\infty} c_{\ell} z^{2\ell}$ (with this series being absolutely convergent for any $z$ with $|z| < 4$) and then $\widehat{g}(\xi_1, \xi_2, \ldots, \xi_d) := \sum_{\ell=0}^{\infty} c_{\ell} (\xi_1^2 + \xi_2^2 + \ldots + \xi_d^2)^{\ell}$ defines a radial function on $B_4$ that admits an analytic continuation to $\mc{D}_4 \subset \C^d$.}

\smallskip 

Let us briefly comment on the motivation behind the regularity conditions. In Section \ref{Sec_9} we discuss how the radiality condition in (R1), the non-negativity condition in (R1), and the smallness condition in \eqref{20210525_09:45} and \eqref{20210710_19:14} (associated to (R2.A) and (R2.C), respectively) are reasonable assumptions, in the sense that if one of them is removed, then it is possible to construct explicit examples of perturbations $g$ for which the constant functions do not maximize \eqref{20210524_17:54}. As far as the proof strategy for Theorems \ref{Thm1} and \ref{Thm1b} is concerned, and how the assumptions play a role, we highlight the following aspects. The radiality condition in (R1) is present in order to preserve the natural radial symmetry of the problem. As in the predecessors \cite{COS15, Fo15}, which treat the case $g = {\bf 0}$, the strategy can be divided into three main steps: 
\begin{itemize}
\item[I.] Symmetrization; 
\item[II.] Magical identity and an application of the Cauchy--Schwarz inequality; 
\item[III.] Spectral analysis of a quadratic form. 
\end{itemize}
Step I, in which we reduce the search of maximizers to non-negative and even functions, is similar to the one in \cite{COS15, Fo15}, using the non-negativity condition in (R1). On the other hand, Steps II and III bring new insights. In \cite{Fo15}, Foschi had the elegant idea of introducing what we call a magical geometric identity to deal with the singularity of the two-fold convolution of the surface measure of the sphere at the origin: if $(\omega_1, \omega_2, \omega_3, \omega_4) \in (\sph{d-1})^4$ are such that $\omega_1 + \omega_2 + \omega_3 + \omega_4 = 0$, then
\begin{equation}\label{20210525_11:49}
|\omega_1 + \omega_2|\, |\omega_3 + \omega_4| + |\omega_1 + \omega_3|\, |\omega_2 + \omega_4| + |\omega_1 + \omega_4|\, |\omega_2 + \omega_3| = 4.
\end{equation}
In the presence of a perturbation $g$, one must find the ``correct" magical identity which needs to be applied, a task that in principle is not obvious. We present a new point of view to generate such magical identities, via the underlying partial differential equation (Helmholtz equation) and opportune applications of integration by parts. This general perspective turns out to be amenable to perturbations, and this ultimately enables our progress in Step II (and, as a by-product, we recover \eqref{20210525_11:49} in the case $g = {\bf 0}$). Finally, in Step III we arrive at the analysis of a suitable quadratic form. Conceptually behind our proof lurks the fact that, in the corresponding step in \cite{COS15, Fo15} for $g = {\bf 0}$, there was ``some room to spare", in the sense that certain Gegenbauer coefficients which appeared in connection to the problem were not only less than or equal to zero (which would suffice for the argument that constants are maximizers) but, in fact, strictly less than zero. In order to properly understand, quantify and  take advantage of such heuristics, we bring in the final regularity assumption {(R2.A) in case of Theorem \ref{Thm1}, and (R2.C) in case of Theorem \ref{Thm1b}}, since, in essence, smoothness of $\widehat{g}$ will ultimately yield the required decay of the corresponding Gegenbauer coefficients.

\smallskip

By H\"{o}lder's inequality, one has $\|f\|_{L^2(\sph{d-1})} \leq \sigma(\mathbb S^{d-1})^{1-\frac2p} \|f\|_{L^p(\sph{d-1})}$ for any $p \geq 2$, with equality if $f$ is constant. Hence, under the assumptions of {Theorem \ref{Thm1} or Theorem \ref{Thm1b}}, we see that the constant functions are also maximizers of the family of inequalities 
\begin{equation*}
\left| \int_{\R^d} \big|\widehat{f\sigma}(x)\big|^4\,h(x)\,\d x\right|^{1/4} \leq {\bf C}\, \sigma(\mathbb S^{d-1})^{1-\frac2p}\, \|f\|_{L^p(\sph{d-1})}\,,
\end{equation*}
for any $p \geq 2$, with the same optimal ${\bf C}$ as in \eqref{20210524_17:54}. A related weighted inequality in the regime $p=4$ and $d=3$, with a simpler setup, has been previously suggested by Christ and Shao in \cite[Remark 16.3]{ChSh1}. 

\smallskip

We conclude with an observation that may be of interest for further research in the theme: the methods developed here yield a proof that constants are maximizers of inequality \eqref{20210524_17:54} for certain non-zero perturbations $g$ in dimensions $d\geq 8$. This is a regime where such a result is not yet known in the original unweighted setting $g={\bf 0}$; see \S \ref{quadweights} for the details, where in particular we prove the following result.

 \smallskip
\begin{theorem}[Sharp inequality on $\mathbb S^7$]\label{thm:NewSharpIneq}
    For every $a> a_\star:=\tfrac{2^{25}\pi^2}{5^2  7^2  11}$, the following sharp inequality holds:
    \begin{equation}\label{eq:corrector_estimate}
        \int_{\mathbb R^8} \lvert \widehat{f\sigma}(x)\rvert^4\, \d x +a\left\lvert \int_{\mathbb S^7}  f(\omega)\, \d \sigma(\omega)\right\rvert^4 \le {\bf C}_a \left( \int_{\mathbb S^7} \lvert f(\omega)\rvert^2\, \d \sigma(\omega)\right)^2,
    \end{equation}
    with optimal constant given by
    \begin{equation}\label{eq:sharp_const_a}
        {\bf C}_a=\int_{\mathbb R^8} \widehat{\sigma}(x)^4\frac{\d x}{\sigma(\mathbb S^7)^2} + a \sigma(\mathbb S^7)^2.
    \end{equation}
    Moreover, equality in~\eqref{eq:corrector_estimate} occurs if and only if $f$ is a constant function on $\mathbb S^7$.
\end{theorem}
\noindent {\sc{Remark}:} We conjecture that~\eqref{eq:corrector_estimate} should hold for every $a\ge 0$, with a larger set of maximizers if $a=0$.

\subsection*{A word on notation} Throughout the text we denote by ${\bf 1}$ (resp. ${\bf 0}$)  the constant function equal to $1$ (resp. $0$), which may be on $\R^d$ or $\sph{d-1}$ depending on the context. The indicator function of a set $X$ is denoted by ${\bf 1}_X$. 
Given a radius $R>0$, we let $B_R = \{x \in \R^d \ ; \ |x| <R\}$ be the open ball centered at the origin in $\R^d$, and $\mc{D}_R = \{z \in \C^d \ ; \ |z| <R\}$ be the open disk (we shall use here the term ``disk" instead of ``ball" just to emphasize the different environment) centered at the origin in $\C^d$. Their respective topological closures are denoted by $\overline{B_R}$ and $\overline{\mc{D}_R}$. We denote by $\widehat{g}\big|_{B_4}$ the restriction of $\widehat{g}$ to the ball $B_4$. We write $A \lesssim B$ if $A \leq C B$ for a certain constant $C>0 $, and we write $A \simeq B$ if $A \lesssim B$ and $B \lesssim A$ (parameters of dependence of such a constant  $C>0$ might appear as a subscript in the inequality sign).
\section{Symmetrization} 
Throughout the paper we keep the notation $h = {\bf 1} + g$. Then 
\begin{equation}\label{eq:hHat}
\widehat{h} = (2\pi)^d\,\boldsymbol{\delta} + \widehat{g}\,,
\end{equation}
where $\boldsymbol{\delta}$ is the $d$-dimensional Dirac delta distribution. For functions $f_i :\sph{d-1} \to \C$ ($1 \leq i \leq 4$), define the quadrilinear form
\begin{equation}\label{20210526_15:39}
 Q_h(f_1,f_2,f_3,f_4):=
\int_{(\sph{d-1})^4} \widehat h(\omega_1+\omega_2-\omega_3-\omega_4)  \prod_{j=1}^4 f_j(\omega_j) \, \d\sigma(\omega_j).
\end{equation}
Further define the quadrilinear forms $Q_{\bf 1}$ and $Q_g$ as in \eqref{20210526_15:39}, with ${\bf 1}$ and $g$ replacing $h$, respectively. Let $f \in L^2(\sph{d-1})$. Plancherel's identity leads us to 
\begin{align}\label{20210717_00:33}
 \int_{\R^d} \big|\widehat{f\sigma}(x)\big|^4\,\d x = (2\pi)^d\,\big\|f\sigma* f\sigma\big\|_{L^2(\R^d)}^2 = Q_{\bf 1}(f, f, \overline{f}, \overline{f})
 \end{align}
(note that this quantity is always non-negative), and
\begin{align}\label{20210717_00:34}
\int_{\R^d} \big|\widehat{f\sigma}(x)\big|^4\,g(x)\,\d x = Q_g(f, f, \overline{f}, \overline{f})
\end{align}
(this quantity, in principle, could be negative). Adding \eqref{20210717_00:33} and \eqref{20210717_00:34} we plainly get
\begin{align}\label{eq:Q_h_decomp}
 \int_{\R^d} \big|\widehat{f\sigma}(x)\big|^4\,h(x)\,\d x = Q_{\bf 1}(f, f, \overline{f}, \overline{f}) + Q_g(f, f, \overline{f}, \overline{f}) = Q_h(f, f, \overline{f}, \overline{f}).
\end{align} 

We now show how to exploit the symmetries of the problem, thus obtaining some estimates which simplify the search for the maximizers. The discussion of the cases of equality will be applied in Section~\ref{sec:classification}.

\subsection{Reduction to non-negative functions} Our first auxiliary result is the following.
\begin{lemma}\label{Lem2_non-negative}
Let $f \in L^2(\sph{d-1})$. We have
\begin{equation}\label{eq:non_neg_symmetry}
\big|Q_h(f, f, \overline{f}, \overline{f})\big| \leq Q_h(|f|, |f|, |f|, |f|). 
\end{equation}
Equality holds if $f$ is non-negative $($in particular, if $f ={\bf 1}$$)$. Furthermore, if there is equality, then necessarily 
\begin{equation}\label{eq:equality_case_non-negative}
    \lVert f\sigma \ast f\sigma \rVert_{L^2(\mathbb R^d)}=\lVert \lvert f\rvert \sigma \ast \lvert f\rvert \sigma \rVert_{L^2(\mathbb R^d)}.
\end{equation}
\end{lemma}
\begin{proof}
Inequality \eqref{eq:non_neg_symmetry} follows immediately from the definition \eqref{20210526_15:39} of $Q_h$ since, by condition (R1) and \eqref{eq:hHat}, we have that the measure $\widehat{h}$ is non-negative on $\overline{B_4}$. Similarly, note that 
\begin{align}\label{20210717_00:50}
Q_{\bf 1}(f, f, \overline{f}, \overline{f}) \leq Q_{\bf 1}(|f|, |f|, |f|, |f|)  \ \ {\rm and} \ \ \big|Q_g(f, f, \overline{f}, \overline{f})\big|  \leq  Q_{g}(|f|, |f|, |f|, |f|).
\end{align}
From \eqref{eq:Q_h_decomp}, the triangle inequality and \eqref{20210717_00:50} we actually have the intermediate inequalities
\begin{align}\label{20210717_00:53}
\begin{split}
\big|Q_h(f, f, \overline{f}, \overline{f})\big| & \leq Q_{\bf 1}(f, f, \overline{f}, \overline{f}) + \big|Q_g(f, f, \overline{f}, \overline{f})\big| \\
& \leq Q_{\bf 1}(|f|, |f|, |f|, |f|) + Q_{g}(|f|, |f|, |f|, |f|) = Q_{h}(|f|, |f|, |f|, |f|).
\end{split}
\end{align}
In order to have equality in \eqref{20210717_00:53}, we must have equality in both inequalities of \eqref{20210717_00:50}, and the first of these is equivalent to \eqref{eq:equality_case_non-negative}.  
\end{proof}
From now on, unless otherwise stated, we will assume without loss of generality that $f$ is a non-negative function. In particular,  $\int_{\R^d} \big|\widehat{f\sigma}(x)\big|^4\,h(x)\,\d x = Q_h(f, f, f, f)$ is also non-negative.

\subsection{Reduction to even functions} Given a function $f:\sph{d-1} \to \R_{\geq 0}$ we define its {\it antipodally symmetric rearrangement} $f_{\sharp}$ by
$$f_{\sharp}(\omega) := \left( \frac{f(\omega)^2 +f(-\omega)^2}{2}\right)^{\frac12}.$$ 
Observe that $\|f_{\sharp}\|_{L^2(\sph{d-1})} = \|f\|_{L^2(\sph{d-1})}$. 
\begin{lemma}\label{Lem3_even}
If $f:\sph{d-1} \to \R_{\geq 0}$ belongs to $L^2(\sph{d-1})$ then
\begin{equation}\label{eq:Q_sharp_symmetrization}
Q_h(f,f,f,f) \leq Q_h(f_{\sharp} , f_{\sharp}, f_{\sharp}, f_{\sharp}).
\end{equation}
There is equality if and only if $f=f_\sharp$ $($in particular, if $f ={\bf 1}$$)$.
\end{lemma}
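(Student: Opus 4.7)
The plan is to rewrite $Q_h(f,f,f,f)$ in terms of an autocorrelation convolution that is automatically even and non-negative, and then compare it pointwise with $\phi_\sharp := f_\sharp\sigma * f_\sharp\sigma$. Set $\tilde f(\omega) := f(-\omega)$ and $M := f\sigma * \tilde f\sigma$. Since $f$ is real, $\overline{\widehat{f\sigma}(x)} = \widehat{\tilde f\sigma}(x)$, so $|\widehat{f\sigma}(x)|^{2} = \widehat M(x)$, which shows that $M$ is a non-negative, even measure on $\R^d$ supported in $\overline{B_2}$. Invoking \eqref{eq:Q_h_decomp}, the convolution theorem, and Parseval's identity, I obtain the representation
\[
Q_h(f,f,f,f) = \int_{\R^d} h(x)\,\widehat{M}(x)^{2}\,\dx \;=\; \iint_{\R^d \times \R^d} \widehat h(y-z)\,M(y)\,M(z)\,\dy\,\dz.
\]
Since $f_\sharp$ is already even, $f_\sharp\sigma * \widetilde{f_\sharp}\sigma = \phi_\sharp$, and the analogous representation with $\phi_\sharp$ in place of $M$ holds for $Q_h(f_\sharp,f_\sharp,f_\sharp,f_\sharp)$.

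The crucial step is the pointwise bound $M(y) \leq \phi_\sharp(y)$ for every $y \in \R^d$. Substituting $\omega_2 \mapsto -\omega_2$ in the definition of $M$ first rewrites
\[
M(y) = \int_{(\sph{d-1})^{2}} \delta(y - \omega_1 + \omega_2)\,f(\omega_1) f(\omega_2)\,\d\sigma(\omega_1)\d\sigma(\omega_2).
\]
The change of variables $(\omega_1, \omega_2) \mapsto (-\omega_1, -\omega_2)$ combined with the evenness of $M$ exhibits $M(y)$ equally as the same integral with $f(\omega_j)$ replaced by $f(-\omega_j)$; averaging the two representations gives
\[
M(y) = \tfrac{1}{2} \int \delta(y - \omega_1 + \omega_2)\bigl[f(\omega_1)f(\omega_2) + f(-\omega_1)f(-\omega_2)\bigr]\,\d\sigma(\omega_1)\d\sigma(\omega_2).
\]
The Cauchy--Schwarz inequality applied to the vectors $(f(\omega_1), f(-\omega_1))$ and $(f(\omega_2), f(-\omega_2))$ bounds the bracket by $2\,f_\sharp(\omega_1)\,f_\sharp(\omega_2)$, and substituting $\omega_1 \mapsto -\omega_1$ (using $f_\sharp = \widetilde{f_\sharp}$) identifies the resulting integral with $\phi_\sharp(y)$, as desired.

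To conclude \eqref{eq:Q_sharp_symmetrization}, I would note that both $M$ and $\phi_\sharp$ are supported in $\overline{B_2}$, so only kernel values $\widehat{h}(y-z)$ with $y-z \in \overline{B_4}$ contribute; there, $\widehat h = (2\pi)^d\,\boldsymbol{\delta} + \widehat g \geq 0$ by condition $({\rm R3})$. Together with $0 \leq M \leq \phi_\sharp$, this lifts the pointwise domination to the integrated inequality. For the equality case, the $\boldsymbol{\delta}$-part of $\widehat h$ alone forces $\|M\|_{L^2(\R^d)} = \|\phi_\sharp\|_{L^2(\R^d)}$, hence $M \equiv \phi_\sharp$ a.e.; the Cauchy--Schwarz step then requires $(f(\omega_1), f(-\omega_1))$ and $(f(\omega_2), f(-\omega_2))$ to be proportional for a.e.\ $\omega_1, \omega_2 \in \sph{d-1}$, which forces $f(\omega) = f(-\omega)$ a.e., i.e., $f = f_\sharp$.

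The main obstacle is conceptual: one must recognize that the autocorrelation $M = f\sigma * \tilde f\sigma$ (rather than the ``obvious'' convolution $\phi = f\sigma * f\sigma$) is the natural object here, precisely because it is automatically even and dominated pointwise by $\phi_\sharp$ via a single Cauchy--Schwarz step. Once this identification is made, the passage from $M \leq \phi_\sharp$ to the integrated inequality is immediate from $\widehat h \geq 0$ on $\overline{B_4}$, and the proof closes cleanly.
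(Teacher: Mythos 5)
Your proof is correct, and it takes a genuinely different route from the paper's. The paper manipulates the quadrilinear form $Q_h(f,f,f,f)$ directly as a four-fold integral over $(\sph{d-1})^4$: it symmetrizes the pair $(\omega_1,\omega_3)$ (one index from each ``side'' of the argument $\omega_1+\omega_2-\omega_3-\omega_4$, compensated by a swap of $\omega_2\leftrightarrow\omega_4$ and the evenness of $\widehat h$), applies Cauchy--Schwarz to pass to $Q_h(f_\sharp,f,f_\sharp,f)$, and then repeats with $(\omega_2,\omega_4)$. You instead descend to a genuinely bilinear statement: you introduce the autocorrelation $M=f\sigma*\tilde f\sigma$, prove the \emph{pointwise} domination $M\leq\phi_\sharp$ by a single Cauchy--Schwarz step applied to $(\omega_1,\omega_2)$ within $M$, and then lift via the monotonicity of $u\mapsto\int(u*u)\,\d\widehat h$ for $u\geq 0$ when $\widehat h\geq 0$ on $\overline{B_4}$. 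The pointwise inequality $M\leq\phi_\sharp$ is a stronger intermediate statement than the paper records, and it makes the equality case self-contained: $M\equiv\phi_\sharp$ together with saturation of Cauchy--Schwarz forces proportionality of $(f(\omega),f(-\omega))$ to a fixed non-negative direction $(a,b)$, which (using $f\geq 0$ and the involution $\omega\mapsto-\omega$) forces $a=b$ and hence $f=f_\sharp$; the paper instead reduces to $\lVert f\sigma*f\sigma\rVert_{L^2}=\lVert f_\sharp\sigma*f_\sharp\sigma\rVert_{L^2}$ and cites \cite[Lemma~9]{COS15}. The two remaining minor points to be fully rigorous in your write-up are routine: the identity $\int h\,\widehat{M}^2=\int\widehat h\,(M*M)$ should be justified by $M\in L^1\cap L^2(\R^d)$ with compact support (so $M*M\in C_0$, and the distributional pairing with $\widehat h=(2\pi)^d\boldsymbol{\delta}+\widehat g$ makes sense), and the passage from $M\equiv\phi_\sharp$ a.e.\ to the a.e.\ proportionality of $(f(\omega_1),f(-\omega_1))$ and $(f(\omega_2),f(-\omega_2))$ uses a disintegration over the fibers $\{\omega_1-\omega_2=y\}$, exactly what \cite[Lemma~9]{COS15} encapsulates.
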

\begin{proof} Let us abbreviate the notation by writing $\d \sigma (\vec\omega) := \d\sigma(\omega_1) \,\d\sigma(\omega_2)\,\d\sigma(\omega_3)\,\d\sigma(\omega_4)$, with the vector $\vec\omega=(\omega_1,\omega_2,\omega_3,\omega_4)\in(\sph{d-1})^4$. By changing variables and reordering we observe that 
\begin{align*}
& Q_h(f,f,f,f)  = \int_{(\sph{d-1})^4} \widehat{h}(\omega_1+\omega_2-\omega_3-\omega_4)\,\left(\frac{f(\omega_1)f(\omega_3)+f(-\omega_1)f(-\omega_3)}{2}\right)\,f(\omega_2)\,f(\omega_4)\, \d\sigma(\vec\omega)\\
&  \ \ \ \ \leq \int_{(\sph{d-1})^4} \,\widehat{h}(\omega_1+\omega_2-\omega_3-\omega_4)\, \left( \frac{f(\omega_1)^2 +f(-\omega_1)^2}{2}\right)^{\frac12}\!\left( \frac{f(\omega_3)^2 +f(-\omega_3)^2}{2}\right)^{\frac12} \!\!f(\omega_2)\,f(\omega_4) \, \d\sigma(\vec\omega)\\
& \ \ \ \ = Q_h(f_{\sharp}, f, f_{\sharp}, f)\,,
\end{align*}
where we used the Cauchy--Schwarz inequality in its simplest form, ${AB+CD}\leq \sqrt{A^2+C^2}\,\sqrt{B^2+D^2}$. Recall that both the measure ${\bf 1}_{\overline{B_4}}\,\widehat h$ and the function $f$ are non-negative. Repeating the argument with the variables $(\omega_2,\omega_4)$ instead of $(\omega_1,\omega_3)$ finishes the proof of~\eqref{eq:Q_sharp_symmetrization}. 

\smallskip

To establish the case of equality, we note that, by the same argument as in the proof of Lemma~\ref{Lem2_non-negative}, if there is equality in~\eqref{eq:Q_sharp_symmetrization} then $\lVert f\sigma \ast f\sigma\rVert_{L^2(\mathbb R^d)}=\lVert f_\sharp \sigma \ast f_\sharp \sigma \rVert_{L^2(\mathbb R^d)}$. This implies $f=f_\sharp$; see~\cite[Lemma~9]{COS15}. \end{proof}

Hence, on top of being non-negative, we may further assume that $f$ is even (i.e. $f(\omega) = f(-\omega)$ for all $\omega \in \sph{d-1}$) in our search for maximizers. In this case we note that $\widehat{f \sigma}$ is real-valued, and that
\begin{equation}\label{20210526_17:05}
 \int_{\R^d} \big(\widehat{f\sigma}(x)\big)^4\,h(x)\,\d x = Q_h(f,f,f,f)=\int_{(\sph{d-1})^4} \widehat h\left(\sum_{j=1}^4\omega_j\right) \prod_{j=1}^4 f(\omega_j)\, \d\sigma(\omega_j).
\end{equation}

\section{Magical identities via partial differential equations}\label{MIPDE}
Since $\widehat{h} \geq 0$ on $\overline{B_4}$, one could think of applying the Cauchy--Schwarz inequality to the right-hand side of \eqref{20210526_17:05}. This turns out to be an inappropriate move, which leads to an unbounded quadratic form because of the singularity of the two-fold convolution of the surface measure of the sphere at the origin (one has $(\sigma*\sigma)(x) \simeq 1/|x|$ near $x =0$; see \eqref{eq:twofoldconv} below). In order to overcome this obstacle, Foschi \cite{Fo15} had the remarkable idea of introducing a suitable term on the right-hand side of \eqref{20210526_17:05} (with $g = {\bf 0}$) in order to control this singularity. Such a move is only admissible because of the insightful geometric identity \eqref{20210525_11:49}. Our goal in this section is to find a proper replacement in the general weighted situation. We do so by presenting a different perspective on how to look for such magical identities, via the connection with the underlying Helmholtz equation.  

\smallskip 

\subsection{Helmholtz equation and integration by parts}  \label{Helmholtz}The next result lies at the genesis of our magical identity. Recall from the remark after \eqref{eq:FTdef} that we may assume that ${\rm supp}(\widehat{g}) \subset \overline{B_4}$, which implies that $g$ is radial, real-valued, smooth, and that $g$ and all of its derivatives belong to $L^2(\R^d) \cap L^{\infty}(\R^d)$. For simplicity, let us assume this is the case throughout \S\ref{Helmholtz} and \S\ref{MagicIdentity}.

\begin{proposition}\label{prop:magic_id}
{Let $d \geq 3$} and let $f\in L^2(\sph{d-1})$ be a non-negative and even function. Then
\begin{equation}\label{eq:MI}
\int_{\R^d} \big(\widehat{f\sigma}(x)\big)^4 \,h(x)\, \d x 
= 
\frac34\int_{\R^d} \big|\nabla\big( (\widehat{f\sigma})^2\big)(x)\big|^2 \,h(x)\,\d x
-\frac14\int_{\R^d} \big(\widehat{f\sigma}(x)\big)^4 \Delta h(x)\,\d x.
\end{equation}
\end{proposition}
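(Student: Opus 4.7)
Setting $u := \widehat{f\sigma}$, I first observe that $u$ satisfies the Helmholtz equation on $\R^d$: since $|\omega| = 1$ on $\sph{d-1}$, differentiating under the integral in \eqref{20210526_14:45} gives
\begin{equation*}
\Delta u(x) = -\int_{\sph{d-1}} |\omega|^2\, e^{ix\cdot\omega} f(\omega)\,\d\sigma(\omega) = -u(x).
\end{equation*}
The product rule then yields $\Delta(u^2) = 2 u \Delta u + 2|\nabla u|^2 = -2u^2 + 2|\nabla u|^2$, which I rearrange as $u^2 = |\nabla u|^2 - \tfrac12 \Delta(u^2)$. Multiplying by $u^2$ and using $|\nabla(u^2)|^2 = 4 u^2 |\nabla u|^2$ produces the pointwise algebraic identity
\begin{equation*}
u^4 = \tfrac14 |\nabla(u^2)|^2 - \tfrac12 u^2 \Delta(u^2),
\end{equation*}
which is \eqref{eq:MI} in disguise, waiting to be integrated.

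The plan is to multiply this pointwise identity by $h$, integrate over $\R^d$, and transfer two derivatives off $u$ onto the weight $h$ in the last term. Concretely, with $v := u^2$ and the elementary formula $v \Delta v = \tfrac12 \Delta(v^2) - |\nabla v|^2$, two successive rounds of integration by parts yield
\begin{equation*}
\int_{\R^d} u^2 \Delta(u^2)\, h\,\d x = \tfrac12 \int_{\R^d} u^4\, \Delta h\,\d x - \int_{\R^d} |\nabla(u^2)|^2\, h\,\d x.
\end{equation*}
Substituting this into the integrated form of the pointwise identity and collecting the $|\nabla(u^2)|^2\, h$ terms immediately produces \eqref{eq:MI}.

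What remains is to justify the integration by parts and ensure absolute convergence of every integral. The weight side poses no difficulty: condition (R2) guarantees that $h$, $\nabla h$, and $\Delta h$ all lie in $L^1(\R^d) \cap L^\infty(\R^d)$. For the Fourier-extension side, I would first establish the identity for $f$ smooth, since then stationary phase furnishes the pointwise decay $u(x),\,\nabla u(x) = O(|x|^{-(d-1)/2})$, which renders every integrand in \eqref{eq:MI} absolutely integrable and makes the boundary flux across $\{|x| = R\}$ of order $R^{d-1} \cdot R^{-2(d-1)} \to 0$ as $R \to \infty$. The general case $f \in L^2(\sph{d-1})$ then follows by density (e.g.\ by truncating the spherical-harmonics expansion of $f$), using the Stein--Tomas inequality to pass to the limit in the $u^4 h$ and $u^4 \Delta h$ integrals; the middle integral is then controlled \emph{a posteriori} by rearranging \eqref{eq:MI}.

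The conceptual content is entirely contained in the Helmholtz observation and the ensuing pointwise identity, which I expect to be routine. The main (rather mild) obstacle is the density/truncation bookkeeping that certifies the vanishing of boundary terms for arbitrary $f \in L^2(\sph{d-1})$; this should be handled cleanly once the smooth case is in hand, and is what forces the inclusion of the higher-regularity hypotheses on $g$ in (R2).
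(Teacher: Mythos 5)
Your approach is essentially the paper's: both hinge on the Helmholtz equation for $u=\widehat{f\sigma}$ together with two rounds of integration by parts, established first for smooth $f$ (with boundary terms killed by the stationary-phase decay $u,\nabla u=O(|x|^{-(d-1)/2})$) and then extended by density. Repackaging the computation as a pointwise identity $u^4 = \tfrac14|\nabla(u^2)|^2 - \tfrac12 u^2\Delta(u^2)$ before integrating against $h$ is a tidy cosmetic variant of the paper's direct chain $\int u^4 h = \int(-\Delta u)u^3 h = \int\nabla u\cdot\nabla(u^3 h)=\ldots$, and the algebra checks out.

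The one place where your argument has a genuine (though modest) gap is the density step for the middle term. Knowing the identity for smooth $f_n\to f$ and passing to the limit in $\int u_n^4\,h$ and $\int u_n^4\,\Delta h$ via Stein--Tomas only tells you that $\int|\nabla(u_n^2)|^2 h$ \emph{converges to some number}; it does not tell you that this limit equals $\int|\nabla(u^2)|^2 h$ for the limit function $u=\widehat{f\sigma}$, which is what the proposition asserts. ``Rearranging \eqref{eq:MI} a posteriori'' proves a version of the identity with $\lim_n\int|\nabla(u_n^2)|^2 h$ in place of $\int|\nabla(u^2)|^2 h$, and one still owes an argument identifying the two. The paper closes this by proving directly that $f\mapsto\int_{\R^d}|\nabla((\widehat{f\sigma})^2)|^2\,h$ is continuous on $L^2(\sph{d-1})$: by Plancherel the quantity is $(2\pi)^d\|\,|y|\,(f\sigma\ast f\sigma)\|_{L^2}^2$ up to the weight, and since $f\sigma\ast f\sigma$ is supported in $\overline{B_2}$ the factor $|y|$ is harmless, so the difference for two inputs is controlled by the Stein--Tomas bound and the difference $\|f_1-f_2\|_{L^2(\sph{d-1})}$. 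You should supply this (or an equivalent) argument rather than rely on the rearrangement.
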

\begin{proof}
Set $u:=\widehat{f\sigma}$ and observe that, by dominated convergence, $u \in C^{\infty}(\R^d)$. 
The function $u$ is a classical solution to the Helmholtz equation $\Delta u (x) + u(x)=0$ for all $x\in \mathbb R^d$. 
Also, by the assumptions on $f$, the function $u$ is real-valued. 

\smallskip

Assume for a moment that $f\in C^\infty(\mathbb S^{d-1})$; this extra hypothesis will be removed at the end of the proof. The Helmholtz equation and integration by parts yield 
\begin{equation}\label{eq:MIstep1}
\int_{\R^d} u^4 h = \int_{\R^d} (-\Delta u) \, u^3h = \int_{\R^d} \nabla u\cdot\nabla (u^3 h).
\end{equation}
Note that there are no boundary terms, since
\begin{equation}\label{20210526_18:08}
\lim_{R\to\infty}\int_{\sph{d-1}_R} \left(\nabla u\cdot \frac{\nu}{R}\right)\,u^3 h\,\d\sigma_{d-1, R}(\nu)= 0,
\end{equation}
where ${\sph{d-1}_R} \subset \R^d$ denotes the sphere of radius $R$ centered at the origin, and $\sigma_{d-1, R}$ is its surface measure. 
Identity \eqref{20210526_18:08} follows from the fact that $f\in C^\infty(\mathbb S^{d-1})$, since a well-known stationary phase argument~\cite[Chapter VIII, \S3, Theorem 1]{St93} yields the decay estimate
\begin{equation}\label{20210526_18:09}
\lvert \nabla u(x)\rvert + |u(x)|\lesssim (1+|x|)^{\frac{1-d}2}
\end{equation}
for every $x\in\R^d$. Estimate \eqref{20210526_18:09} and the fact that $h \in L^{\infty}(\R^d)$ plainly imply \eqref{20210526_18:08}. 

\smallskip

Since $\nabla (u^3 h)=(3u^2\nabla u)h+u^3\nabla h$ and $u^2|\nabla u|^2=\frac14|\nabla(u^2)|^2$, further partial integrations from \eqref{eq:MIstep1} yield
\begin{align}\label{20210526_18:04}
\int_{\R^d} u^4 h 
&=\frac34\int_{\R^d} |\nabla(u^2)|^2\, h-\int_{\R^d} u\,\nabla\cdot(u^3\nabla h)=\frac34\int_{\R^d} |\nabla(u^2)|^2h-\frac14\int_{\R^d} u^4\Delta h,
\end{align}
which is the desired identity \eqref{eq:MI}. The last identity in \eqref{20210526_18:04} amounts to realizing that
\[\nabla\cdot(u^3\nabla h)=\nabla(u^3)\cdot\nabla h + u^3\nabla\cdot(\nabla h)=(3u^2\nabla u)\cdot\nabla h+u^3\Delta h\]
and that
\[\int_{\R^d} u^3\,\nabla u\cdot\nabla h
=\frac14\int_{\R^d}\nabla(u^4)\cdot\nabla h
=-\frac14\int_{\R^d} u^4\Delta h.\]
The boundary terms in the preceding partial integrations vanish, for similar reasons to the ones mentioned in \eqref{20210526_18:08}--\eqref{20210526_18:09}; here we are using that $h \in C^2(\R^d)$ with $h, \nabla h, \Delta h \in L^{\infty}(\R^d)$.

\smallskip

It remains to prove that the smoothness assumption on $f$ can be dropped. For any $M>0$ and any $f_1, f_2$ such that  $\lVert f_1\rVert_{L^2(\mathbb S^{d-1})}+\lVert f_2\rVert_{L^2(\mathbb S^{d-1})}\le M$, {we let $u_j :=  \widehat{f_j\sigma}$ and use the fact that $h\in L^\infty(\mathbb R^d)$, together with the Cauchy--Schwarz inequality and Plancherel's identity, to obtain}
\begin{equation*}
    \begin{split}
        &\left| \int_{\mathbb R^{d}}\big\lvert \nabla u_1^2\big\rvert^2 h -  \int_{\mathbb R^d}\big\lvert \nabla u_2^2\big\rvert^2 h\right|^2 \lesssim_h \int_{\mathbb R^d} \left\lvert \nabla \big(u_1^2-u_2^2\big)\right\rvert^2 \, \int_{\mathbb R^d}\left\lvert\nabla \big(u_1^2+u_2^2\big)\right\rvert^2 \\ 
        &=(2\pi)^{2d} \left(\int_{\mathbb R^d} \lvert y \rvert^2 \left\lvert (f_1\sigma\ast f_1\sigma)(y) -(f_2\sigma\ast f_2\sigma)(y) \right\rvert^2\, \d y\right) \left( \int_{\mathbb R^d}\lvert y \rvert^2 \left\lvert (f_1\sigma\ast f_1\sigma)(y)  +(f_2\sigma\ast f_2\sigma)(y) \right\rvert^2\,\d y\right) \\ 
        &\lesssim_{d} \int_{\mathbb R^d} \big\lvert u_1^2 - u_2^2\big\rvert^2 \int_{\mathbb R^d} \big\lvert u_1^2 + u_2^2\big\rvert^2 \lesssim_{d, M}\lVert f_1-f_2\rVert_{L^2(\mathbb S^{d-1})}^2. 
    \end{split}
\end{equation*}
In the last line, we have used the fact that both $f_1\sigma\ast f_1\sigma$ and $f_2\sigma\ast f_2 \sigma$ {are supported on $\overline{B_2}$}, as well as the Stein--Tomas estimate; {in fact, note that 
\begin{align*}
\int_{\mathbb R^d} \big\lvert u_1^2 - u_2^2\big\rvert^2 & = \int_{\mathbb R^d} \big\lvert u_1 + u_2\big\rvert^2 \ \big\lvert u_1 - u_2\big\rvert^2 \leq \left(\int_{\mathbb R^d} \big\lvert u_1 + u_2\big\rvert^4\right)^{1/2}\left(\int_{\mathbb R^d} \big\lvert u_1 - u_2\big\rvert^4\right)^{1/2}\\
& \leq \Big( \|u_1\|_{L^4(\R^d)} +  \|u_2\|_{L^4(\R^d)}\Big)^2 \left(\int_{\mathbb R^d} \big\lvert u_1 - u_2\big\rvert^4\right)^{1/2}\\
& \lesssim_{d} \Big( \|f_1\|_{L^1(\mathbb{S}^{d-1})} +  \|f_2\|_{L^2(\mathbb{S}^{d-1})}\Big)^2\ \lVert f_1-f_2\rVert_{L^2(\mathbb S^{d-1})}^2\\
& \lesssim_{d,M}\lVert f_1-f_2\rVert_{L^2(\mathbb S^{d-1})}^2\,,
\end{align*}
where we used the Cauchy–Schwarz inequality, triangle inequality, and finally the Stein--Tomas estimate, as claimed.} This proves that the first term on the right-hand side of~\eqref{eq:MI} is a continuous function of $f\in L^2(\mathbb S^{d-1})$. Since $\Delta h$ is also in $L^\infty(\mathbb R^d)$, the same argument proves that all the terms in~\eqref{eq:MI} are continuous in $L^2(\mathbb S^{d-1})$, which concludes the proof by density. 
\end{proof}

\subsection{Magical identity} \label{MagicIdentity}Taking the gradient (in the variable $x$) in \eqref{20210526_14:45} yields
\begin{equation*}
\nabla\big( (\widehat{f\sigma})^2\big)(x) = i\int_{(\sph{d-1})^2} e^{ix\cdot (\omega_1+\omega_2)} (\omega_1+\omega_2)    f(\omega_1)f(\omega_2) \,\d\sigma(\omega_1) \,\d\sigma(\omega_2).
\end{equation*}
Hence
\begin{align}\label{20210527_12:23}
\begin{split}
\big|\nabla\big( (\widehat{f\sigma})^2\big)(x)\big|^2
&=\int_{(\sph{d-1})^4} e^{ix\cdot(\omega_1+\omega_2-\omega_3-\omega_4)}(\omega_1+\omega_2)\cdot (\omega_3+\omega_4)\,\prod_{j=1}^4 f(\omega_j)  \,\d\sigma(\omega_j)\\
&=-\int_{(\sph{d-1})^4}  e^{ix\cdot\left(\sum_{j=1}^4\omega_j\right)}(\omega_1+\omega_2)\cdot (\omega_3+\omega_4) \prod_{j=1}^4 f(\omega_j)\,\d\sigma(\omega_j)\,,
\end{split}
\end{align}
where we have used the fact that $f$ is non-negative and even. It follows that ($\frac43$ times) the first term on the right-hand side of identity \eqref{eq:MI} is given by
\begin{align}\label{20210527_12:41}
\int_{\R^d} \big|\nabla\big( (\widehat{f\sigma})^2\big)(x)\big|^2 \,h(x)\,\d x = -\int_{(\sph{d-1})^4} \widehat{h}\left(\sum_{j=1}^4\omega_j\right)(\omega_1+\omega_2)\cdot(\omega_3+\omega_4) \prod_{j=1}^4 f(\omega_j)\,\d\sigma(\omega_j).
\end{align}

\smallskip

Similarly, for the second term on the right-hand side of \eqref{eq:MI}, we have
\begin{align}\label{20210527_12:42}
\begin{split}
\int_{\R^d} \big(\widehat{f\sigma}(x)\big)^4 \Delta h(x)\,\d x
&=\int_{(\sph{d-1})^4}\widehat{\Delta h}\left(\sum_{j=1}^4\omega_j\right)\prod_{j=1}^4f(\omega_j)\,\d\sigma(\omega_j)\\
&=-\int_{(\sph{d-1})^4}\widehat{ h}\left(\sum_{j=1}^4\omega_j\right)\left|\sum_{j=1}^4\omega_j\right|^2\prod_{j=1}^4f(\omega_j)\,\d\sigma(\omega_j).
\end{split}
\end{align}

\smallskip

At this point, we note that 
\begin{equation}\label{eq:NonNeg}
-3(\omega_1+\omega_2)\cdot(\omega_3+\omega_4)+\left|\sum_{j=1}^4\omega_j\right|^2
=
|\omega_1+\omega_2|^2+|\omega_3+\omega_4|^2-(\omega_1+\omega_2)\cdot(\omega_3+\omega_4)\geq 0\,,
\end{equation}
in light of the Cauchy--Schwarz and the AM--GM inequalities.
In fact, the left-hand side of \eqref{eq:NonNeg} is zero if and only if $\omega_1 + \omega_2 = \omega_3 + \omega_4 = 0$. Plugging \eqref{20210527_12:41}, \eqref{20210527_12:42} and \eqref{eq:NonNeg} into \eqref{eq:MI} we arrive at
\begin{align}\label{20210527_12:46}
\begin{split}
\int_{\R^d} & \big(\widehat{f\sigma}(x)\big)^4 \,h(x)\, \d x \\
& = \frac14\int_{(\sph{d-1})^4} \widehat h \left(\sum_{j=1}^4\omega_j\right) \left(|\omega_1+\omega_2|^2+|\omega_3+\omega_4|^2-(\omega_1+\omega_2)\cdot(\omega_3+\omega_4)\right) \prod_{j=1}^4 f(\omega_j)\, \d\sigma(\omega_j).
\end{split}
\end{align}
This is our magical identity. Note that when $g = {\bf 0}$ we have 
\begin{equation*}
    \widehat{h}\left(\sum_{j=1}^4 \omega_j\right)\prod_{j=1}^4 \,\d\sigma(\omega_j) = (2\pi)^d\,\boldsymbol{\delta}\left(\sum_{j=1}^4 \omega_j\right)\prod_{j=1}^4 \d\sigma(\omega_j),
\end{equation*}
which is a measure supported in the submanifold  of $(\mathbb S^{d-1})^4$ defined by the equation $\sum_{j=1}^4 \omega_j=0$. With respect to this measure, the newly introduced term \eqref{eq:NonNeg} is almost everywhere equal to a multiple of $|\omega_1+\omega_2|^2 = |\omega_3+\omega_4|^2 = |\omega_1+\omega_2|\, |\omega_3+\omega_4|$, and we thus recover Foschi's identity~\cite[Eq.~(9)]{Fo15}.

\subsection{Cauchy--Schwarz} \label{sec:cauchy_schwarz} Recall that we are assuming $f$ to be non-negative and even. In light of \eqref{eq:NonNeg} and the fact that ${\bf 1}_{\overline{B_4}}\, \widehat h \geq 0$, we are now in position to move on by applying the Cauchy--Schwarz inequality on the right-hand side of \eqref{20210527_12:46}.  This leads to 
\begin{align}\label{20210527_14:23}
\int_{\R^d} & \big(\widehat{f\sigma}(x)\big)^4 \,h(x)\, \d x \leq  \frac14\int_{(\sph{d-1})^2} \,f(\omega_1)^2f(\omega_2)^2 \,\widetilde K_h(\omega_1,\omega_2) \, \d\sigma(\omega_1)\, \d\sigma(\omega_2),
\end{align}
where 
\begin{equation}\label{20210528_11:33}
\widetilde K_h(\omega_1,\omega_2):=\int_{(\sph{d-1})^2}\widehat h \left(\sum_{j=1}^4\omega_j\right)\left(|\omega_1+\omega_2|^2+|\omega_3+\omega_4|^2-(\omega_1+\omega_2)\cdot(\omega_3+\omega_4)\right) \d\sigma(\omega_3)\,\d\sigma(\omega_4).
\end{equation}
Since $\widehat h$ is radial on $\overline{B_4}$, $\widetilde K_h(\omega_1,\omega_2)=\widetilde K_h(\rho\omega_1,\rho\omega_2)$ for every rotation $\rho\in \textup{SO}(d)$ and, therefore, $\widetilde K_h$ depends only on the inner product $\omega_1\cdot\omega_2$.
Thus we define 
\begin{equation}\label{20210528_11:34}
K_h(\omega_1\cdot\omega_2):=\widetilde K_h(\omega_1,\omega_2).
\end{equation} 
Further define the functions $K_{\bf 1}$ and $K_g$ as in \eqref{20210528_11:33}--\eqref{20210528_11:34}, with ${\bf 1}$ and $g$ replacing $h$, respectively.

\smallskip

\noindent{\sc Remark}: Assuming $f$ non-negative and even, equality happens in \eqref{20210527_14:23} if and only if $f$ is constant. To see this, just split $h=\boldsymbol{1} + g$ and argue like in the proof of Lemma~\ref{Lem3_even}, using that the cases of equality for the analogous of~\eqref{20210527_14:23} with $h=\boldsymbol{1}$ have already been completely characterized in~\cite[Lemma 11]{COS15}, and are only the constant functions.

\section{Bilinear analysis} The task ahead of us now consists of analyzing the right-hand side of \eqref{20210527_14:23}. 

\subsection{A quadratic form} \label{sec:quadratic} Consider the quadratic form 
\begin{equation}\label{20210527_14:54}
H_{d,h}(\varphi):=\int_{(\sph{d-1})^2} {\varphi}(\omega_1)\,\overline{\varphi(\omega_2)} \,K_h(\omega_1\cdot \omega_2)\,\d\sigma(\omega_1)\,\d\sigma(\omega_2).
\end{equation}
This defines a real-valued and continuous functional on $L^1(\sph{d-1})$.
Indeed, $K_h=K_{{\bf 1}}+K_g$, and so $H_{d,h}=H_{d,{\bf 1}}+H_{d,g}$.
From \cite[Lemma 5]{COS15} it follows that (recall that $\widehat{{\bf 1}} = (2\pi)^d\,\boldsymbol{\delta}$ in our setup)
\begin{equation}\label{eq:kappad}
K_{\bf 1}(t)= \left[(2\pi)^d\cdot 3\cdot 2^{2-\frac{d}{2}}\sigma_{d-2}\big(\sph{d-2}\big)\right]\,(1+t)^{\frac12}(1-t)^{\frac{d-3}2},
\end{equation}
where $t:=\omega_1\cdot\omega_2$. The continuity of $H_{d,{\bf 1}}$ on $L^1(\sph{d-1})$, as noted in \cite[Eq.\@ (5.19)]{COS15}, is a simple consequence of the fact that $K_{\bf 1} \in L^{\infty}([-1,1])$ since one can prove directly from \eqref{20210527_14:54} that 
\begin{equation*}
\big|H_{d,{\bf 1}}(\varphi_1) - H_{d,{\bf 1}}(\varphi_2)\big| \leq \|K_{\bf 1}\|_{L^{\infty}([-1,1])}\, \left( \|\varphi_1\|_{L^1(\sph{d-1})} + \|\varphi_2\|_{L^1(\sph{d-1})}\right) \, \|\varphi_1 - \varphi_2\|_{L^1(\sph{d-1})}.
\end{equation*}
The continuity of $H_{d,g}$ follows similarly since $\widehat g$ is bounded on $\overline{B_4}$, whence $K_g\in L^\infty([-1,1])$ and an analogous argument applies.  

\smallskip

The following proposition is the final piece in our puzzle.

\begin{proposition}\label{Prop5}
Let $d\in\{3,4,5,6,7\}$. Let $\varphi\in L^1(\sph{d-1})$ be an even function and write
$$\mu=\frac{1}{\sigma\big(\sph{d-1}\big)}\int_{\sph{d-1}}\varphi(\omega)\,\d\sigma(\omega)$$ 
for the average of $\varphi$ over $\sph{d-1}$. {Under the hypotheses of Theorem \ref{Thm1} $($resp. Theorem \ref{Thm1b}$\,)$} there exists a positive constant $\mc{A}_{d,R}$ $($resp. $\mc{C}_d$$)$ such that, if \eqref{20210525_09:45} $($resp. \eqref{20210710_19:14}$)$ holds, then
\begin{equation*}
H_{d,h}(\varphi)\leq H_{d,h}(\mu{\bf 1})=|\mu|^2 H_{d,h}({\bf 1})\,,
\end{equation*}
with equality if and only if $\varphi$ is a constant function.
\end{proposition}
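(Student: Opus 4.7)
The approach is to diagonalize the quadratic form $H_{d,h}$ through spherical harmonics and the Funk--Hecke theorem. First I would extend $H_{d,h}$ by density to $L^2(\sph{d-1})$ (justified since $K_h \in L^\infty([-1,1])$), decompose $\varphi = \sum_{\ell \ge 0} \pi_\ell \varphi$ into its spherical-harmonic components $\pi_\ell \varphi \in \H_\ell$, and note that evenness of $\varphi$ kills the odd-degree contributions, while $\pi_0 \varphi = \mu \cdot {\bf 1}$. Since $K_h$ is zonal, the Funk--Hecke theorem produces real numbers $\lambda_\ell = \lambda_\ell(d,h)$, which up to explicit normalization are the Gegenbauer coefficients of $K_h$ with parameter $\nu=(d-2)/2$, satisfying
\[
H_{d,h}(\varphi) = \sum_{\ell \ge 0} \lambda_\ell \, \|\pi_\ell \varphi\|_{L^2(\sph{d-1})}^2, \qquad H_{d,h}(\mu{\bf 1}) = |\mu|^2 \lambda_0 \, \sigma(\sph{d-1}).
\]
The proposition thus reduces to the two sign conditions $\lambda_0 > 0$ and $\lambda_\ell < 0$ for every even $\ell \ge 2$.

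Next I would split $\lambda_\ell = \lambda_\ell^{({\bf 1})} + \lambda_\ell^{(g)}$ consistently with $K_h = K_{\bf 1} + K_g$. The unperturbed coefficients $\lambda_\ell^{({\bf 1})}$ come from the explicit kernel \eqref{eq:kappad} and were analysed in \cite{COS15, Fo15}, where it is shown that precisely for $d \in \{3,4,5,6,7\}$ one has $\lambda_0^{({\bf 1})} > 0$ and $\lambda_\ell^{({\bf 1})} < 0$ strictly for each even $\ell \ge 2$, together with an effective quantitative lower bound on $|\lambda_\ell^{({\bf 1})}|$. The plan is then to choose the constant $\mc{A}_{d,R}$ (resp. $\mc{C}_d$) so that the smallness hypothesis on $\widehat g$ forces $|\lambda_\ell^{(g)}| < |\lambda_\ell^{({\bf 1})}|$ for every even $\ell \ge 2$, as well as $\lambda_0 > 0$. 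The strictness of these inequalities immediately yields the equality case: $H_{d,h}(\varphi) = \lambda_0 \|\pi_0\varphi\|_{L^2}^2$ forces $\pi_\ell \varphi = 0$ for every even $\ell \ge 2$, hence $\varphi = \mu\cdot{\bf 1}$.

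To produce such effective bounds on $\lambda_\ell^{(g)}$, I would express it via Funk--Hecke as a weighted integral of $K_g$ against $C_\ell^{\nu}$ on $[-1,1]$ and then, through \eqref{20210528_11:33}--\eqref{20210528_11:34}, as an explicit linear functional of $\widehat g\big|_{\overline{B_4}}$. Under condition (R4.A), I would exploit analyticity of the continuation $\mathcal{G}$ by shifting a contour in $\C$ (via Cauchy's theorem combined with the Laplace-type integral representation of $C_\ell^{\nu}$) to obtain geometric decay of the form $|\lambda_\ell^{(g)}| \lesssim \max_{\overline{\mc{D}_R}}|\mathcal{G}|\cdot\rho(R)^{-\ell}$ with some $\rho(R) > 1$; summing against $1/|\lambda_\ell^{({\bf 1})}|$ then produces the explicit constant $\mc{A}_{d,R}$, whose $R\to\infty$ limit behaves like $\rho(R)^2 \simeq R^2$, explaining the structure of $\mc{L}_d$. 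Under condition (R4.C), the decay instead stems from integration by parts $k(d) = \lfloor(d+3)/2\rfloor$ times using the Rodrigues formula for $C_\ell^{\nu}$; the radiality hypothesis (R1) reduces all multi-index derivatives of $\widehat g$ to the one-dimensional radial-derivative family appearing in \eqref{20210710_19:14}, producing a polynomial bound $|\lambda_\ell^{(g)}| \lesssim \ell^{-k(d)} \max_{0\le\alpha_1\le k(d)}\max_{\overline{B_4}} |\partial_{x_1}^{\alpha_1}\widehat g|$. The specific choice $k(d) = \lfloor(d+3)/2\rfloor$ is precisely the minimal exponent making $\sum_{\ell\ge 2} \ell^{-k(d)}/|\lambda_\ell^{({\bf 1})}|$ convergent with explicit constants, giving $\mc{C}_d$.

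The main obstacle I expect is this last quantitative step: producing $\ell$-uniform decay of $\lambda_\ell^{(g)}$ with implicit constants sharp enough to match the numerical thresholds announced in Theorems \ref{Thm1} and \ref{Thm1b}. Two points need careful handling: first, the change of variables relating the spherical inner product $t\in[-1,1]$ to the ambient radius $|\sum_j\omega_j|\in[0,4]$ on which $\widehat g$ actually lives, which couples the Gegenbauer weight $(1-t^2)^{(d-3)/2}$ to both the Jacobian and the geometric factor $|\omega_1+\omega_2|^2+|\omega_3+\omega_4|^2-(\omega_1+\omega_2)\cdot(\omega_3+\omega_4)$ from \eqref{eq:NonNeg}; second, the mode $\ell = 2$, for which $|\lambda_\ell^{({\bf 1})}|$ is smallest and therefore dictates the true admissible threshold, and which should be extracted separately with no wasted constants rather than absorbed into a generic asymptotic estimate.
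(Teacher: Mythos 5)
Your proposal follows essentially the same route as the paper: density reduction to $L^2$, diagonalization via spherical harmonics and Funk--Hecke, the split $\lambda_\ell = \lambda_\ell^{(\bf 1)}+\lambda_\ell^{(g)}$ with the quantitative gap $\lambda_{d,\bf 1}(2\ell)\le -c_d\ell^{-d}$ from \cite{COS15,Fo15}, exponential decay of Gegenbauer coefficients via contour shifting (the paper packages this through Wang's Bernstein-ellipse estimate, Lemma~\ref{Lem_Wang}) in the analytic case, and $k(d)=\lfloor(d+3)/2\rfloor$-fold integration by parts via the Gegenbauer indefinite-integral identity \eqref{20210607_10:131} in the $C^k$ case. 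You also correctly flag the two delicate points — the change of variable $t\leftrightarrow|\omega_1+\omega_2|$ coupling the Gegenbauer weight to the geometric factor, and the degree-$2$ mode as the binding constraint (though note it is where $|\lambda_\ell^{(\bf 1)}|$ is \emph{largest}, not smallest; the point is that the power-law lower bound $c_d\ell^{-d}$ is tight exactly there, and the low modes $2\ell<k(d)$ require the separate Cauchy--Schwarz treatment the paper gives in \S\ref{sec:Concl}).
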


\begin{proof}[{Proof of part of Theorems \ref{Thm1} and \ref{Thm1b}}:] Assuming the validity of Proposition \ref{Prop5}, we apply it with $\varphi = f^2$ (in which $f$ is non-negative and even), coming from  \eqref{20210527_14:23}, to obtain
\begin{equation}\label{20210528_08:09}
\int_{\R^d}  \big|\widehat{f\sigma}(x)\big|^4 \,h(x)\, \d x \leq \frac{H_{d,h}({\bf 1})}{4 \, \sigma\big(\sph{d-1}\big)^2}\,  \|f\|_{L^2(\sph{d-1})}^4.
\end{equation}
{Note that the left-hand side is non-negative; recall~\eqref{20210526_17:05}}. The fact that \eqref{20210528_08:09} holds for all $f \in L^2(\sph{d-1})$ (with the absolute value of the integral on the left-hand side) follows from Lemmas \ref{Lem2_non-negative} and \ref{Lem3_even}. Note that equality holds in \eqref{20210528_08:09} if $f = {\bf 1}$. This establishes the claim of {Theorems \ref{Thm1} and \ref{Thm1b}} that constant functions maximize the weighted adjoint restriction inequality \eqref{20210524_17:54}. 
\end{proof}
We shall first discuss the quantitative part of Theorems \ref{Thm1} and \ref{Thm1b}, and then return to the full characterization of maximizers in Section~\ref{sec:classification} below.

\subsection{Proof of Proposition \ref{Prop5}} In order to prove Proposition \ref{Prop5}, we may work without loss of generality with $\varphi\in L^2(\sph{d-1})$. The general case, including the characterization of the cases of equality, follows by a density argument as outlined in our precursor \cite[Proof of Lemma 12]{COS15}, using the continuity of $H_{d,h}$ in $L^1(\sph{d-1})$.

\subsubsection{Funk--Hecke formula and Gegenbauer polynomials} If $\varphi\in L^2(\sph{d-1})$ we write
\begin{equation}\label{eq:SH}
\varphi=\sum_{n=0}^\infty Y_n\,,
\end{equation}
where $Y_n$ is a spherical harmonic of degree $n$. Since $\varphi$ is an even function, in the representation \eqref{eq:SH} we must have $Y_{2\ell +1}=0$ for all $\ell \in \Z_{\geq 0}$. Note also that $Y_0=\mu {\bf 1}$. The partial sums $\sum_{n=0}^N Y_n$ converge to $\varphi$ in $L^2(\sph{d-1})$, as $N\to\infty$, and hence also in $L^1(\sph{d-1})$. Therefore, from \eqref{20210527_14:54} and \eqref{eq:SH}, we are led to
\begin{align}\label{20210527_10:50}
H_{d,h}(\varphi) = \lim_{N \to \infty} \sum_{m,n=0}^{N} \int_{\sph{d-1}} \overline{Y_{m}(\omega_2)} \left( \int_{\sph{d-1}} Y_{n}(\omega_1) \,K_h(\omega_1\cdot \omega_2)\,\d\sigma(\omega_1)\right)\,\d\sigma(\omega_2). 
\end{align}
The tool to evaluate the latter inner integral is the Funk--Hecke formula \cite[Theorem~1.2.9]{DaiXu}:
\begin{equation}\label{20210527_10:51}
\int_{\sph{d-1}} Y_{n}(\omega_1) \,K_h(\omega_1\cdot \omega_2)\,\d\sigma(\omega_1) = \lambda_{d,h}(n)\,Y_{n}(\omega_2)\,,
\end{equation}
with the constant $\lambda_{d,h}(n)$ given by
\begin{equation}\label{20210528_11:59}
\lambda_{d,h}(n) =\sigma_{d-2}\big(\sph{d-2}\big)\int_{-1}^1  \frac{C_{n}^{\frac{d-2}{2}}(t)}{C_{n}^{\frac{d-2}{2}}(1)} \, K_h(t) \,(1-t^2)^{\frac{d-3}2} \,\d t.
\end{equation}
Here, $C_{n}^{\frac{d-2}{2}}$ denotes the {\it Gegenbauer polynomial} ({\it or ultraspherical polynomial}) of degree $n$ and order $\frac{d-2}{2}$. In general, for $\alpha >0$, the Gegenbauer polynomials $t\mapsto C_n^\alpha(t)$ are defined via the generating function 
\begin{equation}\label{eq:GenFuncGP}
(1-2rt+r^2)^{-\alpha}=\sum_{n=0}^\infty C_n^\alpha(t) \, r^n.
\end{equation}
Note that, if $t\in[-1,1]$, the left-hand side of \eqref{eq:GenFuncGP} defines an analytic function of $r$ (for small $r$) and the right-hand side of \eqref{eq:GenFuncGP} is the corresponding power series expansion. We further remark that $C^{\alpha}_{n}(t)$ has degree $n$, and that the Gegenbauer polynomials $\big\{C^{\alpha}_{n}(t)\big\}_{n=0}^{\infty}$ are orthogonal in the interval $[-1,1]$ with respect to the measure $(1- t^2)^{\alpha - \frac12}\,\dt$. Differentiating \eqref{eq:GenFuncGP} with respect to the variable $r$ and comparing coefficients, we obtain the following three-term recursion relation, valid for any $n\geq 1$:
\begin{equation*}
2t(n+\alpha)C_n^\alpha(t)=(n+1)C_{n+1}^\alpha(t)+(n+2\alpha-1)C_{n-1}^\alpha(t)\,,
\end{equation*}
which coincides with \cite[Eq.\@ (2.1)]{Wa16}. Since, additionally, $C_0^\alpha(t)=1$ and $C_1^\alpha(t)=2\alpha t$, our normalization agrees with that from \cite{Wa16}, which is going to be used later in some of our effective estimates. In this normalization,
\begin{equation}\label{eq:ValueAt1}
C_{n}^\alpha(1)=\frac{\Gamma(n+2\alpha)}{n!\, \Gamma(2\alpha)} \ \ \ {\rm and} \ \ \ \int_{-1}^1 C_{n}^\alpha(t)^2\,(1- t^2)^{\alpha - \frac12}\,\dt =\frac{2^{1-2\alpha}\,\pi}{\Gamma(\alpha)^2}\frac{\Gamma(n + 2\alpha)}{n!\, (n + \alpha)}=: (\frak{h}_{n}^{\alpha})^2.
\end{equation}
We further note that $C_{n}^\alpha(-t)=(-1)^n C_{n}^\alpha(t)$ and that $\max_{t \in [-1,1]} \big|C_{n}^\alpha(t)\big| = C_{n}^\alpha(1)$; see \cite[Theorem  7.33.1]{Sz}.

\smallskip

Returning to our discussion, since spherical harmonics of different degrees are pairwise orthogonal, we plainly get from \eqref{20210527_10:50},  \eqref{20210527_10:51}, and the fact that $Y_{n} =0$ if $n$ is odd, that 
\begin{equation*}
H_{d,h}(\varphi)=\sum_{\ell=0}^\infty \lambda_{d,h}(2\ell)\, \|Y_{2\ell}\|_{L^2(\sph{d-1})}^2.
\end{equation*}
The crux of the matter lies in the following result.
\begin{lemma}[Signed coefficients]\label{Lem6_crux}
Let $d\in\{3,4,5,6,7\}$. {Under the hypotheses of Theorem \ref{Thm1} $($resp. Theorem \ref{Thm1b}$\,)$} there exists a positive constant $\mc{A}_{d,R}$ $($resp. $\mc{C}_d$$)$ such that, if \eqref{20210525_09:45} $($resp. \eqref{20210710_19:14}$)$ holds, then $\lambda_{d,h}(0)>0$ and $\lambda_{d,h}(2\ell)<0$ for every $\ell \geq 1$.
\end{lemma}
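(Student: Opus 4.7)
The plan is to exploit the additive decomposition $\lambda_{d,h}(n)=\lambda_{d,{\bf 1}}(n)+\lambda_{d,g}(n)$ coming from $K_h=K_{\bf 1}+K_g$, and thus to reduce the lemma to two quantitative ingredients: first, explicit positive lower bounds for $\lambda_{d,{\bf 1}}(0)$ and for $|\lambda_{d,{\bf 1}}(2\ell)|$ valid for every $\ell\geq 1$, in each dimension $d\in\{3,4,5,6,7\}$; second, matching upper bounds for $|\lambda_{d,g}(2\ell)|$ in terms of an explicit size measurement of $\widehat g$ over $\overline{B_4}$. The first ingredient is essentially already present in \cite{COS15, Fo15}: the closed form \eqref{eq:kappad} of $K_{\bf 1}$ turns each $\lambda_{d,{\bf 1}}(2\ell)$ into a Jacobi-type integral against $C_{2\ell}^{(d-2)/2}$, whose sign was determined there (with strict negativity for $\ell\geq 1$ noted in the excerpt), and standard asymptotics of such integrals provide the required effective lower bound together with the rate of decay in $\ell$. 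The novelty therefore lies entirely in bookkeeping $\lambda_{d,g}(2\ell)$.

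For the second ingredient, I would return to \eqref{20210528_11:33}. Since $\omega_1+\omega_2+\omega_3+\omega_4\in\overline{B_4}$ and $\widehat g$ is radial, only the even one-variable profile $G$ with $\widehat g(\xi)=G(|\xi|)$ on $\overline{B_4}$ enters the picture. Under the analytic hypothesis (R4.A), $G$ admits a Taylor series $G(z)=\sum_m c_m z^{2m}$ convergent on $\overline{\mc{D}_R}$, with Cauchy bounds $|c_m|\leq R^{-2m}\max_{\overline{\mc{D}_R}}|\mathcal{G}|$. Substituting this series into $K_g$ term by term and interchanging summation with integration yields, from the $m$-th monomial, a polynomial in $t=\omega_1\cdot\omega_2$ of degree $m$, so that $\lambda_{d,g}(2\ell)$ is represented as $\max_{\overline{\mc{D}_R}}|\mathcal{G}|$ times a series indexed by $m\geq\ell$ whose general term is bounded by an explicit multiple of $R^{-2m}$. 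Summing this nearly geometric series gives $|\lambda_{d,g}(2\ell)|\leq M(\ell,d,R)\cdot\max_{\overline{\mc{D}_R}}|\mathcal{G}|$ with $M(\ell,d,R)$ decaying like $R^{-2\ell}$. Requiring $M(\ell,d,R)\cdot\max_{\overline{\mc{D}_R}}|\mathcal{G}|<|\lambda_{d,{\bf 1}}(2\ell)|$ for every $\ell\geq 0$ defines precisely the constant $\mc{A}_{d,R}$ as the infimum $\inf_{\ell\geq 0}|\lambda_{d,{\bf 1}}(2\ell)|/M(\ell,d,R)$; the asymptotic $M(1,d,R)\sim R^{-2}$ matches the stated behavior $\mc{L}_d=\lim_{R\to\infty}\mc{A}_{d,R}/R^2$ and serves as a built-in sanity check.

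Under the $C^k$-hypothesis (R4.C) the analyticity trick is unavailable, and smoothness must be exploited directly. The function $t\mapsto K_g(t)$ inherits $C^k$-regularity on $[-1,1]$ from the regularity of $\widehat g$ along any radial line, and repeated integration by parts in the Funk--Hecke integral \eqref{20210528_11:59}, relying on the Rodrigues formula for Gegenbauer polynomials and on the vanishing of the weight $(1-t^2)^{(d-3)/2}$ at the endpoints, transfers $k$ derivatives from $C_{2\ell}^{(d-2)/2}$ onto $K_g$ and gains a factor of order $\ell^{-k}$ per transfer. The threshold $k(d)=\lfloor(d+3)/2\rfloor$ is exactly what is needed for this gain to defeat the normalization $C_{2\ell}^{(d-2)/2}(1)$ and still produce decay commensurate with that of $|\lambda_{d,{\bf 1}}(2\ell)|$; the output is a uniform-in-$\ell$ estimate of the form $|\lambda_{d,g}(2\ell)|\leq N(\ell,d)\cdot\max\bigl\{\max_{\overline{B_4}}|\partial_{x_1}^j\widehat g|:0\leq j\leq k(d)\bigr\}$, and the same comparison as in the analytic case produces the constant $\mc{C}_d$.

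The main obstacle will be the effective bookkeeping: because the conclusion fails as soon as any single $\ell$ satisfies $|\lambda_{d,g}(2\ell)|\geq|\lambda_{d,{\bf 1}}(2\ell)|$, the estimates must be not merely qualitative but numerical and uniform in $\ell$. The delicate regime is an intermediate range of $\ell$, where one must weigh the decaying polynomial pieces of $G$ against the slowly shrinking $|\lambda_{d,{\bf 1}}(2\ell)|$; I expect a handful of small $\ell$ to require direct verification while the tail is absorbed by the asymptotic machinery. Assembling both pieces in each dimension $d\in\{3,4,5,6,7\}$ then yields the numerical values of $\mc{A}_{d,R}$ and $\mc{C}_d$ listed after Theorems~\ref{Thm1} and~\ref{Thm1b}.
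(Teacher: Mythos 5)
Your overall strategy—decomposing $\lambda_{d,h}=\lambda_{d,\mathbf 1}+\lambda_{d,g}$, invoking the effective lower bounds from \cite{COS15,Fo15} for $|\lambda_{d,\mathbf 1}(2\ell)|$, and then bounding $|\lambda_{d,g}(2\ell)|$—is exactly the paper's strategy, and for the $C^k$ branch your plan (integration by parts in the Funk--Hecke integral via the Rodrigues-type relation \eqref{20210607_10:131}, transferring $k(d)=\lfloor(d+3)/2\rfloor$ derivatives onto $K_g$, then relating $\partial^j_t K_g$ to $\partial^j_{x_1}\widehat g$) coincides with what the paper does in Lemmas \ref{Lem_Wang_Sub}--\ref{Lem14_July14}. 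For the analytic branch, however, you propose a genuinely different route. The paper first proves that $K_g$ itself extends to an analytic function on a Bernstein ellipse $\mc E_\rho$ (via the substitution $s=(2+2t)^{1/2}$, the evenness of $K_g^\star$, and Morera's theorem), bounds $\max_{\mc E_\rho}|K_g|$ using the auxiliary integrals, and then applies Wang's black-box estimate (Lemma \ref{Lem_Wang}) for Gegenbauer coefficients of ellipse-analytic functions; the exponential decay $\rho^{-2\ell}$ comes out of that theorem. You instead propose to expand the radial profile $G$ of $\widehat g$ in its Taylor series on $\mc D_R$, apply Cauchy's coefficient bounds $|c_m|\leq R^{-2m}\max|\mc G|$, observe that each monomial $|\xi|^{2m}$ produces a polynomial in $t$ of degree $m+1$ and hence contributes only to $\lambda_{d,g}(2\ell)$ with $2\ell\leq m+1$, and sum the resulting nearly-geometric tail. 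This is more elementary and self-contained (no appeal to Wang's theorem or Bernstein ellipses), at the price of doing the combinatorial bookkeeping of the Gegenbauer expansion of each $|\omega_1+\omega_2+\omega_3+\omega_4|^{2m}$ by hand; the ratio of the geometric series is $(4/R)^2$ rather than the $R^{-2}$ you wrote (since $|\omega_1+\omega_2+\omega_3+\omega_4|\leq 4$), which is consistent with the hypothesis $R>4$ and still yields exponential decay in $\ell$. One small slip: for $\ell=0$ no comparison is needed, since $K_h\geq K_{\mathbf 1}>0$ on $(-1,1)$ by (R3) gives $\lambda_{d,h}(0)\geq\lambda_{d,\mathbf 1}(0)>0$ outright; the comparison should be imposed only for $\ell\geq 1$.
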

Assuming the validity of Lemma \ref{Lem6_crux}, the proof of Proposition \ref{Prop5} follows at once since
\begin{equation*}
H_{d,h}(\varphi)=\sum_{\ell=0}^\infty \lambda_{d,h}(2\ell)\, \|Y_{2\ell}\|_{L^2(\sph{d-1})}^2 \leq  \lambda_{d,h}(0)\|Y_{0}\|_{L^2(\sph{d-1})}^2 = H_{d,h}(\mu{\bf 1})=|\mu|^2 H_{d,h}({\bf 1}),
\end{equation*}
with equality if and only if $Y_{2\ell} =0$ for all $\ell \geq 1$, which means that $\varphi = Y_0$ is a constant function.

\smallskip

We address the proof of the key Lemma \ref{Lem6_crux} in the next three sections.

\section{The spectral gap}\label{sec:spectralgap}
In this section, we briefly discuss the common strategy for the proof {of Lemma \ref{Lem6_crux}}, both in the analytic and $C^k$-versions, and quantify the available gap. Throughout the rest of the paper we let $\nu := \tfrac{d-2}{2}$. For $n \in \Z_{\geq 0}$, define the coefficients $\lambda_{d,{\bf 1}}(n)$ and $\lambda_{d,g}(n)$ as in \eqref{20210528_11:59}, with $K_{\bf 1}$ and $K_g$ replacing $K_h$, respectively. From condition (R1), observe that $K_h \geq K_{\bf 1} > 0$ in $(-1,1)$. Since $C_0^{\nu}(t) = 1$ we plainly get that $\lambda_{d,h}(0) \geq \lambda_{d,{\bf 1}}(0) >0$.

\subsection{The strategy} For $\ell\geq 1$ we proceed as follows. First we write
\begin{equation}\label{20210710_23:01}
\lambda_{d,h}(2\ell)=\lambda_{d,{\bf 1}}(2\ell)+\lambda_{d,g}(2\ell).
\end{equation}
The following observation from the proof of \cite[Lemma 13]{COS15} is a key ingredient in our argument: for each $d\in\{3,4,5,6,7\}$ there exists a constant $c_d >0$ such that, for every $\ell \geq 1$,
\begin{equation}\label{eq:Bounds1}
 \lambda_{d,{\bf 1}}(2\ell)\leq -c_d\,\ell^{-d}<0\,;
\end{equation}
see Lemma \ref{Lem_previous_asymp} below for a precise quantitative statement.\footnote{If $d\geq 8$, then we start to observe that $\lambda_{d,{\bf 1}}(2)>0$ and this step of the proof breaks down.} In order to argue that \eqref{20210710_23:01} is negative for all $\ell \geq 1$, in light of \eqref{eq:Bounds1} it suffices to show that, for all $\ell \geq 1$, we have
\begin{equation}\label{eq:Compareg1}
|\lambda_{d,g}(2\ell)|<c_d\,\ell^{-d}.
\end{equation}

If we consider the Gegenbauer expansion of $K_g$, namely,
\begin{equation}\label{20210528_14:54}
K_g(t)=\sum_{n=0}^\infty a_{n}^{\nu} \, C_{n}^\nu(t) \ \ \ (t \in [-1,1]),
\end{equation}
we find directly from \eqref{20210528_11:59} and \eqref{eq:ValueAt1} that
\begin{equation}\label{20210528_14:55}
\lambda_{d,g}(n) = \frac{2 \,\pi^{\nu +1}}{(n + \nu) \, \Gamma(\nu)} \ a_{n}^{\nu}.
\end{equation} 
Here we used the fact that $\sigma_{d-2}\big(\sph{d-2}\big)= 2\,\pi^{\nu + \frac12} /\, \Gamma(\nu + \tfrac12)$ together with the duplicating formula for the Gamma function, $\Gamma(\nu) \Gamma(\nu + \tfrac12) = 2^{1 - 2\nu} \,\pi^{\frac12}\, \Gamma(2\nu)$. Looking back at \eqref{eq:Compareg1}--\eqref{20210528_14:55}, we see that we need good estimates for the decay of the Gegenbauer coefficients $a_{2\ell}^{\nu}$ in terms of the function $K_g$, and this is ultimately where the smoothness of $\widehat{g}\big|_{\overline{B_4}}$ will play a role.

\subsection{Quantifying the gap}\label{sec:Gap}
We now provide an effective form of the gap inequality \eqref{eq:Bounds1}. Most of the work towards this goal was essentially accomplished in \cite{COS15, Fo15}, and here we just revisit it in a format that is appropriate for our purposes. For convenience, let us recall \eqref{eq:kappad} and \eqref{20210528_11:59}:
\begin{align*}
K_{\bf 1}(t)& = \left[(2\pi)^d\cdot 3\cdot 2^{2-\frac{d}{2}}\sigma_{d-2}\big(\sph{d-2}\big)\right]\,(1+t)^{\frac12}(1-t)^{\frac{d-3}2}\, \ \ \ (t \in [-1,1]);\\
\lambda_{d,{\bf 1}}(n) & =\sigma_{d-2}\big(\sph{d-2}\big)\int_{-1}^1  \frac{C_{n}^{\frac{d-2}{2}}(t)}{C_{n}^{\frac{d-2}{2}}(1)} \, K_{\bf 1}(t) \,(1-t^2)^{\frac{d-3}2} \,\d t\ \ \ \ (n \in \Z_{\geq 0}).
\end{align*}

\begin{lemma}[cf. \cite{COS15, Fo15}]\label{Lem_previous_asymp} For $d \in \{3,4,5,6,7\}$ let $\kappa_d := (2\pi)^d\cdot 3\cdot 2^{3-d}\left[\sigma_{d-2}\big(\sph{d-2}\big)\right]^2$. Then, for each $\ell \geq 1$,
\begin{align}
\lambda_{3,{\bf 1}}(2\ell) & =\kappa_3 \left( \frac{-8}{(4\ell-1)(4\ell+1)(4\ell+3)} \right)\leq -\left(\frac{2^8 \pi^5}{35}\right) \ell^{-3}\,; \label{20210601_09:57}\\ 
\lambda_{4,{\bf 1}}(2\ell) & =\kappa_4 \left( \frac{-8}{(2\ell-1)\,(2\ell+1)^2\,(2\ell+3) }\right) \leq -\left(\frac{2^{10} \pi^6}{15}\right) \ell^{-4}\,; \label{20210601_09:58}\\
\lambda_{5,{\bf 1}}(2\ell) & =\kappa_5\left(\!\frac{-1536(2\ell+1)(2\ell+2)(4\ell^2 + 6\ell - 3)}{\binom{2\ell+2}{2}(4\ell-3)(4\ell-1)(4\ell+1)(4\ell+3)(4\ell+5)(4\ell+7)(4\ell+9)}\right)\!\leq -\left(\frac{2^{15} \pi^9}{2145}\right)\ell^{-5}\,;\label{20210601_09:59}\\
\lambda_{6,{\bf 1}}(2\ell) & =\kappa_6\left(\frac{-32(2\ell+2)}{\binom{2\ell+3}{3}(2\ell-1)(2\ell+1)(2\ell+3)(2\ell+5)}\right)\leq -\left(\frac{2^{15} \pi^{10}}{1575}\right)\ell^{-6}\,;\label{20210601_09:60}\\
\lambda_{7,{\bf 1}}(2\ell) &=\kappa_7\left(\frac{-163840 (2\ell+1)(2\ell+2)(2\ell+3)(2\ell+4)(4\ell^2 + 10\ell - 15)(4\ell^2 + 10\ell - 3)}{\binom{2\ell+4}{4} (4\ell\!-\!5)(4\ell\!-\!3)(4\ell\!-\!1)(4\ell\!+\!1)(4\ell\!+\!3)(4\ell\!+\!5)(4\ell\!+\!7)(4\ell\!+\!9)(4\ell\!+\!11)(4\ell\!+\!13)(4\ell\!+\!15)}\right) \nonumber \\
& \qquad   \leq -\left(\frac{2^{21} \pi^{13}}{1322685 }\right)\ell^{-7}.\label{20210601_09:61}
\end{align}
\end{lemma}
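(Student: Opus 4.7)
The plan is to first obtain closed-form rational-function expressions for $\lambda_{d,{\bf 1}}(2\ell)$ in each dimension $d \in \{3,4,5,6,7\}$, and then extract the quantitative $\ell^{-d}$ bounds by elementary analysis. Substituting the formula \eqref{eq:kappad} for $K_{\bf 1}$ into \eqref{20210528_11:59}, and writing $\nu = (d-2)/2$, the task reduces to evaluating
\begin{equation*}
I_{d,n}:=\int_{-1}^{1}\frac{C_n^{\nu}(t)}{C_n^{\nu}(1)}\,(1+t)^{(d-2)/2}(1-t)^{d-3}\,\d t.
\end{equation*}
Using the standard identification of $C_n^{\nu}$ with a constant multiple of the Jacobi polynomial $P_n^{(\nu-\frac12,\,\nu-\frac12)}$, Rodrigues' formula, and $n=2\ell$ integrations by parts, this integral reduces to a single Beta-type integral whose value simplifies, via Legendre's duplication identity and standard Gamma-function manipulations, to the rational expressions displayed in the statement. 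These closed-form evaluations were carried out in \cite{Fo15} for $d=3$ and in \cite{COS15} for $d \in \{4,5,6,7\}$, so for this step I would simply invoke those references.

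For the effective lower bound, after simplifying binomial coefficients, the absolute value $|\lambda_{d,{\bf 1}}(2\ell)|$ takes the form $\kappa_d\,A_d\,N_d(\ell)/D_d(\ell)$, where $A_d$ is a positive rational constant and $N_d,D_d$ are polynomials satisfying $\deg D_d-\deg N_d = d$ (both strictly positive on $\ell\in\{2,3,\ldots\}$; in the single exceptional case $d=7$, $\ell=1$, a sign flip in $N_d$ and $D_d$ cancels and leaves the ratio positive). The desired inequality $\lambda_{d,{\bf 1}}(2\ell)\leq -c_d\,\ell^{-d}$ for all $\ell\geq 1$ is thus equivalent to showing that $D_d(\ell)/(\ell^d N_d(\ell))$ attains its supremum over $\{1,2,3,\ldots\}$ at $\ell=1$. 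Substituting $\ell=1$ into each of the five formulas, together with the explicit values $\sigma_{d-2}(\sph{d-2})=2\pi^{(d-1)/2}/\Gamma((d-1)/2)$ and $\kappa_d=(2\pi)^d\cdot 3\cdot 2^{3-d}[\sigma_{d-2}(\sph{d-2})]^2$, then produces exactly the stated constants $2^{8}\pi^{5}/35$, $2^{10}\pi^{6}/15$, $2^{15}\pi^{9}/2145$, $2^{15}\pi^{10}/1575$ and $2^{21}\pi^{13}/1322685$ for $d=3,4,5,6,7$ respectively.

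The only remaining step is the monotonicity claim $D_d(\ell)/(\ell^d N_d(\ell))\leq D_d(1)/N_d(1)$ for $\ell\in\{1,2,3,\ldots\}$, which I expect to be the main (though still elementary) source of bookkeeping. A uniform approach is to expand $D_d(\ell)/(\ell^d N_d(\ell))$ as a polynomial in $1/\ell$ with explicit coefficients and verify that the derivative is strictly negative on $[1,\infty)$; for $d=3$, for instance, this polynomial equals $64+48/\ell-4/\ell^2-3/\ell^3$, whose derivative times $\ell^4$ is $-48\ell^2+8\ell+9$, which is decreasing on $[1,\infty)$ with value $-31$ at $\ell=1$ and hence strictly negative on $[1,\infty)$. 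Analogous derivative inspections handle $d=4,5,6$; for $d=7$ the single sign-flipping case $\ell=1$ is handled separately by the direct substitution already carried out above, and monotonicity of the remaining piece on $\ell\geq 2$ is again a mechanical derivative check. The main obstacle is thus the per-dimension algebraic verification across five cases; no deeper conceptual difficulty is anticipated.
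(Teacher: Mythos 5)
Your proposal is correct and follows essentially the same route as the paper: the closed-form rational expressions are imported from \cite{Fo15} (for $d=3$) and \cite[Proof of Lemma 13]{COS15} (for $d\in\{4,5,6,7\}$), and the quantitative bound is then obtained by multiplying by $\ell^d$ and verifying monotonicity in $\ell$ so that the extremal value occurs at $\ell=1$. Your reformulation in terms of the reciprocal $D_d(\ell)/(\ell^d N_d(\ell))$ and the explicit derivative check for $d=3$ are just a spelled-out version of what the paper dismisses as ``routine verification (e.g.\ with basic computer aid).''
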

\begin{proof} The identity in \eqref{20210601_09:57} follows from \cite[Proof of Lemma 5.4]{Fo15} (in the notation of that proof, one has $C_{2 \ell}^{1/2} = P_{2\ell}$, which is an even function). The identities in \eqref{20210601_09:58}--\eqref{20210601_09:61} follow from \cite[Proof of Lemma 13, Steps 2 to 5]{COS15} (in the notation of that proof, there is a quantity $\Lambda_{2\ell}(\phi_d)$ which is computed, satisfying $\lambda_{d,{\bf 1}}(2\ell) = \big(\kappa_d / \sigma_{d-2}\big(\sph{d-2}\big)\big)\Lambda_{2\ell}(\phi_d)$, with $\lambda_{d,{\bf 1}}(2\ell)$ and $\kappa_d$ as defined above; recall that $C_{2\ell}^{\frac{d-2}{2}}$ is even). 

\smallskip

The upper bounds in \eqref{20210601_09:57}--\eqref{20210601_09:61} work as follows. For \eqref{20210601_09:57}, one multiplies the left hand-side by the appropriate power of $\ell$ (in this case, $\ell^3$) and observe that 
$$\ell \mapsto  \frac{-8\ell^3}{(4\ell-1)(4\ell+1)(4\ell+3)}$$
defines a decreasing function of $\ell \geq 1$. This is a routine verification (e.g.\@ with basic computer aid). The upper bound then comes from evaluating it at $\ell =1$. The other cases follow the same reasoning. 
\end{proof}

\section{Proof of Lemma \ref{Lem6_crux}: analytic version}
{In this section we work under the hypotheses of Theorem \ref{Thm1}; in particular, (R2.A) holds. Recall $\nu := \tfrac{d-2}{2}$}.

\subsection{Bounds for the Gegenbauer coefficients (analytic version)} As previously observed, we need decay estimates for the Gegenbauer coefficients $a_{n}^{\nu}$ in terms of the function $K_g$ in \eqref{20210528_14:54}. The analogous situation for Fourier series is very classical, via the paradigm that regularity of the function implies decay of the Fourier coefficients. Here we face a similar situation, where the orthogonal basis is the one of Gegenbauer polynomials, and we want to deploy the same philosophy that regularity on one side implies decay on the other side. 

\smallskip

If our function, initially defined on the interval $[-1,1]$, admits an analytic continuation, then we will be able to invoke careful quantitative estimates from the recent work of Wang \cite{Wa16}. In order to state the relevant result, given $\rho>1$, define the so-called {\it Bernstein ellipse} $\mathcal E_\rho \subset \C$ as
\begin{equation*}
\mathcal E_\rho:=\left\{z\in\C\,; z=\tfrac12\big(\rho e^{i\theta}+\rho^{-1} e^{-i\theta}\big)\,, \, 0\leq \theta\leq 2\pi\right\},
\end{equation*}
with foci at $\pm 1$ and major and minor semiaxes of lengths $\frac12(\rho+\rho^{-1})$ and $\frac12(\rho-\rho^{-1})$, respectively;
see Figure \ref{fig:ellipse}.
The following result from \cite{Wa16} will be convenient for our purposes.
\begin{lemma} \label{Lem_Wang}{\rm (Wang \cite[Theorem 4.3]{Wa16})} Let $\mc{K}$ be a function that is analytic inside and on the Bernstein ellipse $\mathcal E_\rho$ for some $\rho >1$. Let $M := \max_{z \in \mathcal E_\rho} |\mc{K}(z)|$. Let $\alpha >0$ and consider the Gegenbauer expansion
$$\mc{K}(t)= \sum_{n=0}^\infty \frak{a}_{n}^{\alpha} \, C_{n}^\alpha(t) \ \ \ (t \in [-1,1]).$$
Then, for any $n \geq 1$, we have the following explicit estimates:
\begin{equation}\label{20210528_16:55}
|\frak{a}_{n}^{\alpha}| \leq 
\left\{
\begin{array}{ll}
\Lambda(n, \rho, \alpha) \left( 1 - \dfrac{1}{\rho^2}\right)^{\alpha -1} \, \dfrac{n^{1 - \alpha}}{\rho^{n +1}}\,,& {\rm if}\ 0 < \alpha \leq 1; \\[0.4cm]
\Lambda(n, \rho, \alpha) \left( 1 + \dfrac{1}{\rho^2}\right)^{\alpha -1} \, \dfrac{n^{1 - \alpha}}{\rho^{n +1}}\,,& {\rm if}\ \alpha > 1,
\end{array}
\right.
\end{equation}
where
\begin{equation}\label{20210601_11:55}
\Lambda(n, \rho, \alpha) := \frac{\Gamma(\alpha)\, M \, \Upsilon_{n}^{1,\alpha}}{\pi}\left(2\left(\rho+\frac1{\rho}\right)+2\left(\frac{\pi}2-1\right)\left(\rho-\frac1{\rho}\right)\right),
\end{equation}
and
\begin{equation}\label{20210601_11:56}
\Upsilon_{n}^{1,\alpha}:=\exp\left(\frac{1-\alpha}{2(n+\alpha-1)}+\frac{1}{12n}\right).
\end{equation}
\end{lemma}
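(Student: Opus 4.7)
The plan is to obtain an integral representation for the Gegenbauer coefficient $\frak{a}_n^\alpha$ that is well-suited to contour deformation onto the Bernstein ellipse, and then control the resulting integrand by $M$ together with explicit estimates for the Gegenbauer kernel on $\mathcal{E}_\rho$. Since the formula \eqref{20210528_16:55} involves the specific factors $(1 \pm \rho^{-2})^{\alpha-1}$, $\Upsilon_n^{1,\alpha}$, and the dichotomy between $\alpha \leq 1$ and $\alpha > 1$, the proof must keep explicit track of all constants.

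The first step is to start from orthogonality:
\begin{equation*}
\frak{a}_n^\alpha = \frac{1}{(\frak{h}_n^\alpha)^2} \int_{-1}^1 \mc{K}(t)\, C_n^\alpha(t)\, (1-t^2)^{\alpha-\tfrac12}\,\dt,
\end{equation*}
with $(\frak{h}_n^\alpha)^2$ as in \eqref{eq:ValueAt1}, and then convert this to a contour integral using the Gegenbauer function of the second kind $Q_n^\alpha$. Concretely, one uses the identity
\begin{equation*}
(1-t^2)^{\alpha-\tfrac12}\, C_n^\alpha(t)\, \mathbf{1}_{[-1,1]}(t) = \frac{1}{\pi i}\bigl(Q_n^\alpha(t-i0) - Q_n^\alpha(t+i0)\bigr)
\end{equation*}
(up to an explicit $\alpha$-dependent constant). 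Multiplying by $\mc{K}(t)$, integrating, and collapsing the branch cut $[-1,1]$ against the analyticity of $\mc{K}$ on $\mathcal{E}_\rho$, one arrives at
\begin{equation*}
\frak{a}_n^\alpha = \frac{c_\alpha}{(\frak{h}_n^\alpha)^2}\oint_{\mathcal{E}_\rho} \mc{K}(z)\, Q_n^\alpha(z)\,\dz,
\end{equation*}
for an explicit $c_\alpha$ involving $\Gamma(\alpha)$.

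Next I would bound $|Q_n^\alpha|$ pointwise on $\mathcal{E}_\rho$. Parameterizing $z = \tfrac12(\rho e^{i\theta}+\rho^{-1}e^{-i\theta})$ and using the Joukowski change of variables $z = \tfrac12(w + w^{-1})$, the second-kind Gegenbauer function admits the representation
\begin{equation*}
Q_n^\alpha(z) = C(\alpha,n)\, (w^2-1)^{-2\alpha}\, w^{-(n+2\alpha-1)}\, {}_2F_1\!\bigl(\tfrac12,\alpha;\,n+\alpha+\tfrac12;\,w^{-2}\bigr),
\end{equation*}
which on $|w|=\rho$ yields $|Q_n^\alpha(z)| \leq C'(\alpha)\,\rho^{-(n+2\alpha-1)}\cdot |w^2-1|^{-2\alpha}$. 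The factor $|w^2-1|^{-2\alpha}$ contributes $(1-\rho^{-2})^{\alpha-1}$ when $\alpha \leq 1$ (the minimum on the ellipse is attained at the real axis) and $(1+\rho^{-2})^{\alpha-1}$ when $\alpha > 1$ (the maximum is attained on the imaginary axis); this is precisely the origin of the dichotomy in \eqref{20210528_16:55}. Integrating over $\mathcal{E}_\rho$ introduces the perimeter estimate, which is comparable to $2(\rho+\rho^{-1}) + 2(\tfrac{\pi}{2}-1)(\rho-\rho^{-1})$, explaining the parenthetical factor in \eqref{20210601_11:55}.

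Finally, I would assemble the pieces and clean up the $n$-dependence. The ratio $c_\alpha/(\frak{h}_n^\alpha)^2$ reduces via the duplication formula and Stirling-type inequalities to $\Gamma(\alpha)/\pi$ times $n^{1-\alpha}$ up to a correction which is exactly $\Upsilon_n^{1,\alpha}$ defined in \eqref{20210601_11:56}: the term $\exp\!\bigl(\tfrac{1-\alpha}{2(n+\alpha-1)}\bigr)$ arises from the sharp Gautschi-type bound
\begin{equation*}
\frac{\Gamma(n+\alpha)}{\Gamma(n+1)} \leq (n+\alpha-1)^{\alpha-1}\exp\!\left(\frac{1-\alpha}{2(n+\alpha-1)}\right),
\end{equation*}
while $\exp(1/12n)$ comes from controlling $\Gamma(n+1)/(n^n e^{-n}\sqrt{2\pi n})$ via the Stirling bound with the explicit $1/(12n)$ remainder. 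Multiplying $M\cdot |\oint_{\mathcal{E}_\rho} Q_n^\alpha|$ by this normalization gives \eqref{20210528_16:55}.

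I expect the main obstacle to be the pointwise control of $Q_n^\alpha$ on $\mathcal{E}_\rho$ with sharp constants, particularly tracking the $(1 \pm \rho^{-2})^{\alpha-1}$ factor cleanly in both regimes of $\alpha$; the rest (Stirling-type cleanup, contour length, and the branch-cut identity linking $Q_n^\alpha$ to $C_n^\alpha(t)(1-t^2)^{\alpha-1/2}$) is mechanical once the normalization is fixed. Since this result is stated exactly as in Wang \cite[Theorem 4.3]{Wa16}, I would in practice simply quote it, but the sketch above is the route I would follow to verify it independently.
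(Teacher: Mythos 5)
The paper provides no proof of this lemma; it is stated verbatim as a citation of Wang \cite[Theorem 4.3]{Wa16}, and you correctly conclude that in practice one would simply quote the reference, which is exactly what the paper does. Your sketch of the underlying argument (contour-integral representation of $\frak{a}_n^\alpha$ via the Gegenbauer function of the second kind, pointwise bounds on the Bernstein ellipse through the Joukowski map with the $(1\pm\rho^{-2})^{\alpha-1}$ dichotomy, the ellipse-perimeter factor, and the Gautschi/Stirling cleanup producing $\Upsilon_n^{1,\alpha}$) is a faithful reconstruction of the structure of Wang's proof, modulo minor imprecision in the exact exponents appearing in the hypergeometric representation of $Q_n^\alpha$, which does not affect the conclusion since the result is being quoted rather than rederived.
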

If we succeed in proving that our function $K_g$ admits an analytic continuation past a Bernstein ellipse $\mc{E}_{\rho}$ for some $\rho >1$, then Lemma \ref{Lem_Wang} will be an available tool with $\mathcal{K} = K_g$, $\alpha = \nu$ and $n  = 2\ell$. One readily checks that the bounds provided by \eqref{20210528_16:55} decay exponentially in $n = 2\ell$, and from \eqref{eq:Bounds1} and \eqref{20210528_14:55} we see that it should be possible to achieve \eqref{eq:Compareg1} as long as $M = \max_{z \in \mathcal E_\rho} |K_g(z)|$ is sufficiently small, which ultimately will be verified provided {that the analytic continuation of $\widehat{g}\big|_{\overline{B_4}}$ is sufficiently small in a certain disk. 

\begin{figure}
    \centering
    \includegraphics[width=0.4\textwidth]{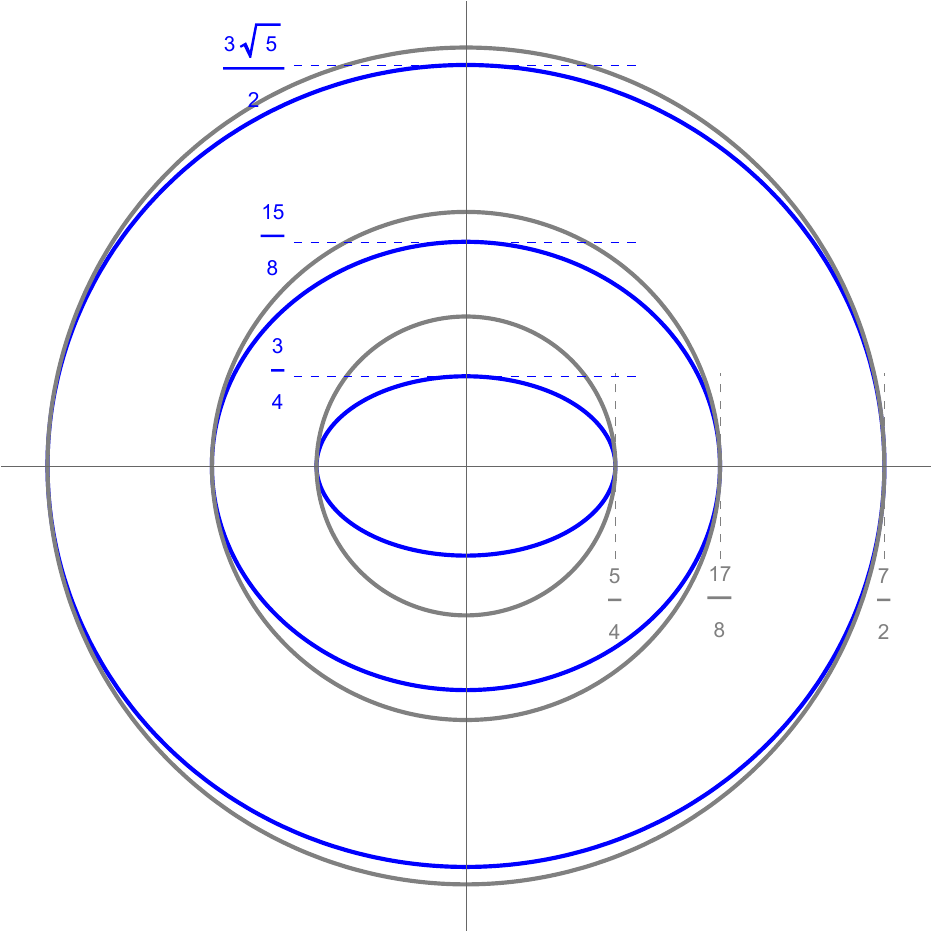}
    \caption{Bernstein ellipses $\mathcal E_\rho$ in the complex plane, $\rho\in\{2,4,\frac{7+3\sqrt 5}2\}$, and the corresponding enveloping disks $\overline{\mc{D}_r}\subset\C$, $r\in\{\frac54,\frac{17}8,\frac72\}$.}
    \label{fig:ellipse}
\end{figure}

\subsection{Analytic continuation of $K_g$} \label{sec:analcontKg} Condition $(\rm R2.A)$ states that $\widehat{g}\big|_{\overline{B_4}}$ admits an analytic extension to a disk $\mc{D}_{R'} \subset \C^d$, with $\overline{\mc{D}_R} \subset \mc{D}_{R'}$ and $R >4$. Recall that, for $t=\omega_1\cdot\omega_2$, we have
\begin{equation*}
K_g(t) = \int_{(\sph{d-1})^2}\widehat g \left(\sum_{j=1}^4\omega_j\right)\left(|\omega_1+\omega_2|^2+|\omega_3+\omega_4|^2-(\omega_1+\omega_2)\cdot(\omega_3+\omega_4)\right) \d\sigma(\omega_3)\,\d\sigma(\omega_4).
\end{equation*}
Set $s:=|\omega_1+\omega_2|=(2+2t)^{\frac12}$ and  $K_g^\star(s):=K_g(t)$. Note that $s \in [0,2]$. We show that $K_g^\star$ can be extended to an {\it even} analytic function on an open disk $\mc{D}_{R''} \subset \C$ of radius $R'' := R'-2 > R-2 > 2$, and hence it admits a power series representation of the form $K_g^\star(s) = \sum_{\ell=0}^{\infty} c_{2\ell} \, s^{2\ell}$, which is absolutely convergent if $|s| < R''$. This plainly implies that $K_g$ can be extended to an analytic function on the open disk $\mc{D}_{R'''} \subset \C$ with $R''' := \big((R'')^2 - 2\big)/2 > 1$ since 
\begin{equation}\label{20210601_12:53}
K_g(t)=K_g^\star(s)
=\sum_{\ell=0}^{\infty} c_{2\ell} \, (s^{2})^{\ell}
=\sum_{\ell=0}^{\infty} c_{2\ell} \, (2 + 2t)^{\ell}
=\sum_{\ell=0}^\infty c'_\ell \,t^\ell.
\end{equation}
This is going to be sufficient for our purposes since the Bernstein ellipse $\mc{E}_{\rho}$ is contained in the closed disk of radius $\tfrac12(\rho + \rho^{-1})$; see Figure \ref{fig:ellipse}. We are then able to choose $\rho>1$ such that $\mc{E}_{\rho} \subset \mc{D}_{R'''}$ . In particular, we can choose $\rho>1$ such that 
\begin{equation}\label{20210603_11:52}
\tfrac12(\rho + \rho^{-1}) = \tfrac12\big((R-2)^2 - 2\big).
\end{equation}

Let us write
\begin{equation}\label{20210601_14:02}
K_g^\star(s)=\textup{I}(s)+\textup{II}(s)-\textup{III}(s)\,,
\end{equation}
where the three summands are defined as follows:
\begin{align}
&\textup{I}(s):=s^2\int_{(\sph{d-1})^2} \widehat g \left(\sum_{j=1}^4\omega_j\right) \d\sigma(\omega_3)\,\d\sigma(\omega_4)\,; \label{20210601_14:03} \\
&\textup{II}(s):=\int_{(\sph{d-1})^2} \widehat g \left(\sum_{j=1}^4\omega_j\right)|\omega_3+\omega_4|^2\, \d\sigma(\omega_3)\,\d\sigma(\omega_4)\,; \label{20210601_14:04}\\
&\textup{III}(s):=\int_{(\sph{d-1})^2}  \widehat g \left(\sum_{j=1}^4\omega_j\right) (\omega_1+\omega_2)\cdot(\omega_3+\omega_4)\,\d\sigma(\omega_3)\,\d\sigma(\omega_4). \label{eq:III}
\end{align}
We show that each of these functions can be extended to an {\it even} analytic function on the open disk $\mc{D}_{R''} \subset \C$. The reasoning for $\textup{I}$ and $\textup{II}$ is similar to that of $\textup{III}$, but simpler. So we focus on $\textup{III}$ only.

\subsubsection{The function $\textup{III}$}\label{Sec6.2.1}
Recall that coordinates for $\omega\in\sph{d-1}$ can be defined recursively:
\begin{equation*}
\omega=(\zeta\sin\theta_{d-1},\cos\theta_{d-1}),\quad \zeta\in\sph{d-2},
\end{equation*}
with 
\begin{equation*}
\d\sigma_{d-1}(\omega) =(\sin \theta_{d-1})^{d-2} \,\d\theta_{d-1}\,\d\sigma_{d-2}(\zeta)\,,
\end{equation*}
where we denote by $\sigma_{d-j}$ the surface measure on the unit sphere $\sph{d-j}$. Since the arc length measure on $\sph{1}$ is simply $\d\theta_1$, it follows by induction that
\begin{equation*}
\d\sigma_{d-1}=\prod_{j=1}^{d-2}(\sin\theta_{d-j})^{d-j-1}\,\d\theta_{d-j}\,\d\theta_1\,,
\end{equation*} 
where $0\leq\theta_1\leq2\pi$ and $0\leq\theta_j\leq \pi$ for $j\in\{2,3,\ldots,d-1\}$.
Going back to \eqref{eq:III}, by the radiality of $\widehat g|_{\overline{B_4}}$, no generality is lost in assuming that $\omega_1+\omega_2=s\,e_1$ for some $s\in[0,2]$,
where $e_1=(1,0,\ldots,0)\in\R^d$ denotes the first coordinate vector. Writing $x=(x_1,x')\in\R\times\R^{d-1}$, we have that
\begin{align}\label{eq:PrepareIII}
\begin{split}
\textup{III}(s)=\int_{[0,2\pi]^2}\int_{[0,\pi]^{2d-4}} &\widehat g\left(s+(\omega_3+\omega_4)_1,(\omega_3+\omega_4)'\right)
s\,(\omega_3+\omega_4)_1 \\
& \qquad \qquad \times\prod_{j=1}^{d-2}(\sin\theta_{d-j})^{d-j-1}\d\theta_{d-j}
\prod_{j=1}^{d-2}(\sin\tilde\theta_{d-j})^{d-j-1}\d\tilde\theta_{d-j}\,\d\theta_1\,\d\tilde\theta_1\,,
\end{split}
\end{align}
where the variables of integration $\omega_3$ and $\omega_4$ are coordinate-wise given 
as follows:
\begin{align}\label{20210720_23:13}
\begin{split}
\omega_{3,1}=\prod_{j=1}^{d-1} \sin\theta_j, &\quad \omega_{4,1}=\prod_{j=1}^{d-1} \sin\tilde\theta_j\,;\\
\omega_{3,2}=\cos\theta_1\prod_{j=2}^{d-1} \sin\theta_j, &\quad\omega_{4,2}=\cos\tilde\theta_1\prod_{j=2}^{d-1} \sin\tilde\theta_j\,;\\ 
\vdots&\quad\vdots\\
\omega_{3,d-1}=\cos\theta_{d-2} \sin\theta_{d-1}, &\quad\omega_{4,d-1}=\cos\tilde\theta_{d-2} \sin\tilde\theta_{d-1}\,;\\
\omega_{3,d}=\cos\theta_{d-1},&\quad \omega_{4,d}=\cos\tilde\theta_{d-1}.
\end{split}
\end{align}

Note that \eqref{eq:PrepareIII} can be used to extend the domain of definition of the function $\textup{III}$ to $\mc{D}_{R''} \subset \C$, by replacing $\widehat g|_{\overline{B_4}}$ by its analytic continuation $\mc{G}$. Such an extended function $s\mapsto \textup{III}(s)$ is clearly continuous.
Let $\gamma$ be an arbitrary\footnote{``Triangle'' would suffice.} closed piecewise $C^1$-curve in $\mc{D}_{R''} \subset \C$.
Then from \eqref{eq:PrepareIII} and Fubini's Theorem it follows that
\begin{align}\label{20210531_12:50}
\begin{split}
\int_\gamma \textup{III}(s)\,\d s
=\int_{[0,2\pi]^2}\int_{[0,\pi]^{2d-4}} & (\omega_3+\omega_4)_1 
\left(\int_\gamma {\mc{G}}\left(s+(\omega_3+\omega_4)_1,(\omega_3+\omega_4)'\right)s\,\d s\right)
\\
&\times\prod_{j=1}^{d-2}(\sin\theta_{d-j})^{d-j-1}\d\theta_{d-j}
\prod_{j=1}^{d-2}(\sin\tilde\theta_{d-j})^{d-j-1}\d\tilde\theta_{d-j}\,\d\theta_1\,\d\tilde\theta_1=0.
\end{split}
\end{align}
Indeed, the innermost integral on the right-hand side of \eqref{20210531_12:50} vanishes by Cauchy's Theorem, since the function {$\mc{G}$} is analytic on $\mc{D}_{R'} \subset \C^d$ (in particular, in its first coordinate). By Morera's Theorem, {it then follows that $\textup{III}$ defines an analytic function on $\mc{D}_{R''} \subset \C$.} {Finally, observe in \eqref{eq:PrepareIII}} that the change of variables $(\theta_1,\tilde\theta_1)\mapsto(-\theta_1,-\tilde\theta_1) \mod 2\pi$, corresponding to a reflection across the hyperplane $\langle e_1\rangle^\perp$, and the radiality of $\widehat g|_{\overline{B_4}}$ together reveal that $\textup{III}(-s)=\textup{III}(s)$, {first for every $s \in (-2,2)$, and hence for every $s\in \mc{D}_{R''}$}.

\smallskip

This yields the qualitative proof of Lemma \ref{Lem6_crux}, and we now proceed to the effective implementation.

{\subsection{Auxiliary integrals} Let us record two integrals that shall be relevant for the upcoming discussion.
\begin{lemma}
Let $\zeta \in \sph{d-1}$ be given. We have:
\begin{align}
\int_{(\sph{d-1})^2} |\omega_3+\omega_4|^2\, \d\sigma(\omega_3)\,\d\sigma(\omega_4) & = 2 \, \sigma(\sph{d-1})^2\,; \label{20210720_23:08}\\
\int_{(\sph{d-1})^2} \big|(\omega_3+\omega_4)\cdot \zeta\big|\, \d\sigma(\omega_3)\,\d\sigma(\omega_4) & = \left(\frac{2^{d-1}\,\Gamma\left(\frac{d}{2}\right)^3}{\pi  \, \Gamma\big( d - \frac12)\,\Gamma\left(\frac{d+1}{2}\right)}\right)   \sigma(\sph{d-1})^2 =: \frak{r}_d \, \sigma(\sph{d-1})^2.\label{20210720_23:09}
\end{align}
\end{lemma}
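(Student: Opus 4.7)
The plan for~\eqref{20210720_23:08} is immediate. Since $|\omega_3|=|\omega_4|=1$, expanding yields $|\omega_3+\omega_4|^2=2+2\,\omega_3\cdot\omega_4$; by Fubini the cross term factorizes and vanishes because $\int_{\sph{d-1}}\omega\,\d\sigma(\omega)={\bf 0}$ by antipodal symmetry. Integrating the remaining constant $2$ yields $2\,\sigma(\sph{d-1})^2$.

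For~\eqref{20210720_23:09}, the plan is to rewrite the left-hand side as $\int_{\R^d}|x\cdot\zeta|\,(\sigma*\sigma)(x)\,\dx$ and to compute the density of the two-fold convolution $\sigma*\sigma$ explicitly. To obtain it, I would test against an arbitrary radial function $F$: fixing $\omega_3$, rotating the $\omega_4$-integral so that $\omega_3$ aligns with $e_1$, and then using $|e_1+\omega|^2=2(1+\omega_1)$ together with the last-coordinate density formula
\[
\int_{\sph{d-1}}G(\omega\cdot\zeta)\,\d\sigma(\omega)=\sigma(\sph{d-2})\int_{-1}^{1}G(t)\,(1-t^2)^{(d-3)/2}\,\d t,
\]
and comparing with polar coordinates on $\R^d$, one arrives at
\[
(\sigma*\sigma)(x)=\sigma(\sph{d-2})\,\frac{\bigl(1-|x|^2/4\bigr)^{(d-3)/2}}{|x|}\,{\bf 1}_{\{|x|\le 2\}}.
\]

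By radiality of $\sigma*\sigma$, the quantity in \eqref{20210720_23:09} depends on $\zeta$ only through $|\zeta|=1$, so I would take $\zeta=e_d$. Passing to spherical coordinates $x=r\omega$ with $r\in[0,2]$, $\omega\in\sph{d-1}$, the integral factors as
\[
\sigma(\sph{d-2})\int_0^2 r^{d-1}\bigl(1-r^2/4\bigr)^{(d-3)/2}\,\d r\;\cdot\;\int_{\sph{d-1}}|\omega_d|\,\d\sigma(\omega).
\]
The radial factor reduces, via $u=r^2/4$, to the Beta integral $2^{d-1}\Gamma(d/2)\Gamma((d-1)/2)/\Gamma(d-\tfrac12)$, while the angular factor is $2\sigma(\sph{d-2})/(d-1)$, by the same density formula applied with $G(t)=|t|$.

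The remaining step is pure Gamma-function bookkeeping: inserting $\sigma(\sph{d-2})=2\pi^{(d-1)/2}/\Gamma((d-1)/2)$, using $(d-1)\Gamma((d-1)/2)=2\Gamma((d+1)/2)$, and normalizing by $\sigma(\sph{d-1})^2=4\pi^d/\Gamma(d/2)^2$ yields exactly $\frak{r}_d\,\sigma(\sph{d-1})^2$. I expect the main delicacy to be this final Gamma-function manipulation; the rest of the argument is routine, so no serious obstacle is anticipated.
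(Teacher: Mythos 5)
Your proposal is correct and follows essentially the same route as the paper: expand $|\omega_3+\omega_4|^2$ for the first identity, and for the second rewrite the integral against the density $\sigma*\sigma$, pass to polar coordinates, evaluate the radial part as a Beta function and the angular part via the standard one-coordinate density formula, then simplify the Gamma factors. The only cosmetic difference is that the paper simply cites \cite[Lemma 5]{COS15} for the closed form of $\sigma*\sigma$, whereas you sketch a derivation of it (which is fine, and your stated density agrees with theirs after rewriting $(4-|x|^2)^{(d-3)/2}=2^{d-3}(1-|x|^2/4)^{(d-3)/2}$). All intermediate values you give — the radial factor $2^{d-1}\Gamma(d/2)\Gamma((d-1)/2)/\Gamma(d-\tfrac12)$, the angular factor $2\sigma(\sph{d-2})/(d-1)$, and the final Gamma cleanup via $(d-1)\Gamma((d-1)/2)=2\Gamma((d+1)/2)$ — check out and reproduce the paper's computation.
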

\noindent {\sc Remark:} For our purposes, the pertinent values of the constants $\frak{r}_d$ are: 
\begin{equation*}
\frak{r}_3 = \frac{2}{3}  \ \ ; \ \  \frak{r}_4 =  \frac{2^8}{45\,\pi^2} \ \ ; \ \  \frak{r}_5= \frac{18}{35}\ \ ; \ \  \frak{r}_6 = \frac{2^{16}}{14175 \,\pi^2} \ \ ; \ \  \frak{r}_7 = \frac{100}{231}.
\end{equation*}
\begin{proof}
Identity \eqref{20210720_23:08} follows simply from the relation $|\omega_3+\omega_4|^2 = 2 + 2 \omega_3 \cdot \omega_4$, together with the fact that $\int_{(\sph{d-1})^2} (\omega_3\cdot \omega_4)\, \d\sigma(\omega_3)\,\d\sigma(\omega_4) = 0$. The proof of \eqref{20210720_23:09} is more interesting. First notice that the left-hand side of \eqref{20210720_23:09} is independent of $\zeta \in \sph{d-1}$ and hence we may assume without loss of generality that $\zeta = e_d = (0,0, \ldots, 1)$. Recall from \cite[Lemma 5]{COS15} the exact expression for the two-fold convolution of the surface measure $\sigma$:
\begin{align}\label{eq:twofoldconv}
(\sigma*\sigma)(x) = 2^{3-d}\, \sigma_{d-2}(\sph{d-2})\, \frac{(4 - |x|^2)^{\frac{d-3}{2}}}{|x|} \, {\bf 1}_{B_2}(x) \ \ \ ; \ \ \ (x \in \R^d).
\end{align}
The left-hand side of \eqref{20210720_23:09} is equal to 
\begin{align*}
& \int_{(\sph{d-1})^2} \int_{\R^d}  \boldsymbol{\delta}(x - \omega_3-\omega_4) \, |x\cdot e_d| \, \dx \, \d\sigma(\omega_3)\,\d\sigma(\omega_4) =  \int_{\R^d} (\sigma*\sigma)(x)\,  |x\cdot e_d|\, \dx\\
& = 2^{3-d}\, \sigma_{d-2}(\sph{d-2}) \int_0^2  \left(\int_{\sph{d-1}} |\omega\cdot e_d|\,\d\sigma(\omega)\right) (4 - r^2)^{\frac{d-3}{2}}\, r^{d-1} \,\d r\\
& = 2^{3-d}\, \big(\sigma_{d-2}(\sph{d-2})\big)^2  \left(\int_0^{\pi} |\cos \theta|\,(\sin \theta)^{d-2}\d\theta\right) \left(\int_0^2 (4 - r^2)^{\frac{d-3}{2}}\, r^{d-1} \,\d r\right)\\
& = 2^{3-d}\, \big(\sigma_{d-2}(\sph{d-2})\big)^2 \left( \frac{2}{d-1}\right)\left( 2^{2d-4} \int_0^1 (1-s)^{\frac{d-3}{2}}\, s^{\frac{d-2}{2}}\,\d s\right)\\
&= \frac{2^d\,\big(\sigma_{d-2}(\sph{d-2})\big)^2}{d-1} \, \frac{\Gamma\left(\frac{d-1}{2}\right)\,\Gamma\left(\frac{d}{2}\right)}{\Gamma\left(d - \frac{1}{2}\right)}\,,
\end{align*}
and the latter is equal to the right-hand side of \eqref{20210720_23:09}. Here, in the second identity we changed variables to polar coordinates $x = r\omega$, in the third identity we changed the variable $\omega$ as described in \eqref{20210720_23:13}, in the fourth identity we evaluated the trigonometric integral and changed variables $r^2 = 4s$ in the other integral, and in the fifth identity we used the Beta function evaluation $\int_0^1 (1-s)^{a-1} s^{b-1}\,\d s = \frac{\Gamma(a)\,\Gamma(b)}{\Gamma(a+b)}$ for $a,b>0$. 
\end{proof}}

\subsection{Quantifying the perturbation} In our setup, recall that $\nu := \tfrac{d-2}{2} \in \big\{\tfrac12, 1, \tfrac32, 2, \tfrac52\big\}$. Our objective now is to bound \eqref{20210528_14:55} using Lemma \ref{Lem_Wang}. We choose the particular $\rho >1$ given by \eqref{20210603_11:52}, verifying
\begin{equation}\label{20210729_14:06}
\rho + \rho^{-1} = (R-2)^2 - 2.
\end{equation}
From \eqref{20210601_11:55} and \eqref{20210601_11:56} we obtain
\begin{equation}\label{20210603_12:15}
\Lambda(2\ell, \rho, \nu) = \frac{\Gamma(\nu)}{\pi} \exp\left(\frac{1-\nu}{2(2\ell+\nu-1)}+\frac{1}{24\ell}\right) \left(2\left(\rho+\frac1{\rho}\right)+2\left(\frac{\pi}2-1\right)\left(\rho-\frac1{\rho}\right)\right) \max_{w\in\mathcal E_{\rho}}\left\lvert K_g(w)\right\rvert.
\end{equation}
{Let $\mc{M}_{d,R} := \max_{z \in \overline{\mc{D}_R}} |\mc{G}(z)|$}. At this point, we want to bound $\max_{w \in \mathcal E_{\rho}}\left\lvert K_g(w)\right\rvert$ in terms of {$\mc{M}_{d,R}$}. We have seen in \eqref{20210601_12:53} that, via the change of variables $w = (s^2 - 2)/2$, we have $K_g(w)=K_g^\star(s)$, and whenever $w \in \mathcal E_{\rho}$ we have $s \in \overline{\mc{D}_{R-2}}$. Using \eqref{20210601_14:02}--\eqref{eq:III}, it follows that
\begin{align}\label{20210601_14:41}
|K_g(w)|=|K_g^\star(s)|\leq |\textup{I}(s)|+|\textup{II}(s)|+|\textup{III}(s)|.
\end{align}
Regarding $\textup{III}(s)$, we look at it via \eqref{eq:PrepareIII}, yielding the analytic continuation. Using the elementary estimates 
\begin{align}\label{20210713_11:03}
 |s| \leq R-2 \ \ \ {\rm and} \ \ \ |s\,e_1 + \omega_3 + \omega_4| \leq |s| + |\omega_3 + \omega_4| \leq R\,,
\end{align}
together with \eqref{20210720_23:09}, we plainly get from definition~\eqref{eq:III}
\begin{align}
|\textup{III}(s)| & \leq (R-2) \,\mc{M}_{d,R} \int_{(\sph{d-1})^2}  \big|(\omega_3+\omega_4)_1\big|\,\d\sigma(\omega_3)\,\d\sigma(\omega_4) \nonumber \\
&\leq   (R-2) \,\frak{r}_d\, \sigma(\sph{d-1})^2\,\mc{M}_{d,R}.\label{20210601_14:42}
\end{align}
Similarly, using the analogous expressions for the analytic continuations of $\textup{I}(s)$ and $\textup{II}(s)$, the elementary inequalities \eqref{20210713_11:03}, and identity \eqref{20210720_23:08} for $\textup{II}(s)$, one finds
\begin{equation}\label{20210601_14:43}
|\textup{I}(s)| \leq  (R-2)^2\, \sigma(\sph{d-1})^2 \, \mc{M}_{d,R} \ \ \ {\rm and} \ \ \ |\textup{II}(s)| \leq    2\,\sigma(\sph{d-1})^2\, \mc{M}_{d,R}.
\end{equation}
Putting together \eqref{20210601_14:41}, \eqref{20210601_14:42} and \eqref{20210601_14:43} we find that 
\begin{align}\label{20210601_15:34}
\max_{w\in\mathcal E_\rho}\left\lvert K_g(w)\right\rvert \leq  {\big((R-2)^2 + (R-2)\frak{r}_d + 2\big)\, \sigma(\sph{d-1})^2\,\mc{M}_{d,R}.}
\end{align}
From \eqref{20210528_14:55},  \eqref{20210528_16:55}, \eqref{20210603_12:15} and \eqref{20210601_15:34}, we obtain
\begin{equation}\label{20210603_12:37}
|\lambda_{d,g}(2\ell)| \leq \beta_{d,R} \,G_{d,R}(\ell)\,\mc{M}_{d,R}\,,
\end{equation}
with the constant $\beta_{d,R}$ given by 
\begin{equation*}
\beta_{d,R}:= 2^{2-\nu}\, \pi^{\nu}  \left[2\left(\rho+\frac1{\rho}\right)+2\left(\frac{\pi}2-1\right)\left(\rho-\frac1{\rho}\right)\right] \left( 1 \pm \dfrac{1}{\rho^2}\right)^{\nu -1} {\big((R-2)^2 + (R-2)\frak{r}_d + 2\big)\, \sigma(\sph{d-1})^2}
\end{equation*}
(the minus sign above is used for $\nu \in \big\{\tfrac12, 1\big\}$ and the plus sign for $\nu \in \big\{\tfrac32, 2, \tfrac52\big\}$) and 
$$G_{d,R}(\ell) := \dfrac{1}{(2\ell + \nu)}\, \exp\left(\dfrac{1-\nu}{2(2\ell+\nu-1)}\!+\!\dfrac{1}{24\ell}\right)\dfrac{\ell^{1 - \nu}}{\rho^{2\ell +1}}.$$

\subsection{Final comparison} Let us write the bounds on the right-hand sides of \eqref{20210601_09:57}--\eqref{20210601_09:61} as $- c_d\,\ell^{-d}$ (i.e. $c_3 = \frac{2^8 \pi^5}{35}$, and so on). Hence, from \eqref{eq:Compareg1}, \eqref{20210601_09:57}--\eqref{20210601_09:61}, and \eqref{20210603_12:37}, it suffices to have that 
\begin{equation*}
\beta_{d,R} \,G_{d,R}(\ell)\,\mc{M}_{d,R} < c_d\,\ell^{-d}.
\end{equation*}
Since this must hold for every $\ell \in \N:=\{1,2,3,\ldots\}$,  equivalently we have to ensure that  
\begin{equation}\label{20210729_13:07}
\mc{M}_{d,R}< \frac{c_d}{\beta_{d,R} \left(\displaystyle\max_{\ell \in \N} G_{d,R}(\ell) \, \ell^d\right)}=:\mathcal{A}_{d, R}.
\end{equation}
Note that the non-negative function $F_{d,R}(\ell) := G_{d,R}(\ell) \, \ell^d$ is exponentially {decaying in $\ell$} (since $\rho >1$), and hence it must attain its maximum value (over $\N$) at a certain $\ell^*_{d,R}$. For instance, a routine verification yields the following values:
\begin{center}
\begin{tabular}{ |l|c|c|c|c|c|c|c|c|c|c| } 
 \hline
 & $\!\ell^*_{3,R}\!$ & $\!\ell^*_{4,R}\!$ & $\!\ell^*_{5,R}\!$& $\!\ell^*_{6,R}\!$& $\!\ell^*_{7,R}\!$ & $\mc{A}_{3,R}$ & $\mc{A}_{4,R}$ & $\mc{A}_{5,R}$  & $\mc{A}_{6,R}$ & $\mc{A}_{7,R}$\\ 
 \hline
 $\!R =  \frac{3\sqrt{2}}{2} +2$  ;  ($\rho = 2$)\! \!\! & $2$ & $2$ & $3$ & $3$ & $4$  & $ {0.534\ldots}\!$ & ${5.064\ldots}\!$ & ${10.276 \ldots}\!$ & ${14.576\ldots}\!$ & ${13.745  \ldots} \!$ \\
  \hline
 $\!R =  \tfrac92$  ;  ($\rho = 4$) \!\!\! & $1$ & $1$ & $1$ & $2$ & $2$  & ${2.805\ldots}\!$ & ${39.860 \ldots}\!$ & ${157.795 \ldots}\!$ & ${333.547 \ldots}\!$ & $ {430.015 \ldots} \!$\\ \hline
  $\!R =  5$  ;  \big($\rho = \frac{7 + 3\sqrt{5}}{2})$\big)\! \!\! & $1$ & $1$ & $1$ & $1$ & $1$  & ${6.493\ldots}\!$ & ${90.100\ldots}\!$ & ${363.294\ldots}\!$ & ${1092.176\ldots}\!$ & $ {2131.265\ldots} \!$\\ \hline
\end{tabular}
\end{center}

\bigskip

This concludes the proof of Lemma \ref{Lem6_crux} in the analytic case.

\subsection{Limiting behavior} {When $R$ is sufficiently large, one can easily verify that $\ell^*_{d,R}= 1$. Hence,
$$\mathcal{A}_{d, R} = \frac{c_d}{\beta_{d,R} \, G_{d,R}(1)}.$$
Recalling \eqref{20210729_14:06}, this plainly implies that 
\begin{equation}\label{20210729_14:34}
\mc{L}_d := \lim_{R \to \infty} \frac{\mc{A}_{d,R}}{R^2} = \frac{c_d}{2^{2- \nu}\, \pi^{\nu +1}\,\sigma(\sph{d-1})^2\, \dfrac{1}{(2 + \nu)}\, \exp\left(\dfrac{1-\nu}{2(1+\nu)}\!+\!\dfrac{1}{24}\right)}\, .
\end{equation}
The corresponding evaluation yields 
$$\mc{L}_3 = 1.826\ldots \ ; \ \mc{L}_4 =24.555\ldots  \ ; \ \mc{L}_5=98.593\ldots  \ ; \  \mc{L}_6 = 296.255\ldots \ ; \ \mc{L}_7 = 579.209\ldots $$
and this concludes the proof of Theorem \ref{Thm1}.}

\section{Proof of Lemma \ref{Lem6_crux}: $C^k$-version}
{In this section we work under the hypotheses of Theorem \ref{Thm1b}; in particular, (R2.C) holds. Recall  $\nu := \tfrac{d-2}{2}$}. 

\subsection{Bounds for the Gegenbauer coefficients ($C^k$-version)} Not having found in the literature a result that would exactly fit our purposes (like Lemma \ref{Lem_Wang} in the analytic case),  we briefly work our way up from first principles. Recall the value of $C_{n}^\alpha(1)$ and the definition of the constant $\frak{h}_{n}^{\alpha}$ in \eqref{eq:ValueAt1}. We start with the following lemma.

\begin{lemma} \label{Lem_Wang_Sub} Let $\mc{K} \in C^0[-1,1] \cap C^k(-1,1)$ for some $k \geq 1$. Let $\alpha >0$ and assume further that \begin{align}\label{20210713_18:20}
t \mapsto \mc{K}^{(j)}(t) \, (1-t^2)^{\alpha + j - \frac12} \ \ {\it is \ bounded \ in} \ (-1,1) \ {\it for} \, 1 \leq j \leq k.
\end{align}
Consider the Gegenbauer expansion
\begin{align}\label{20210712_15:03}
\mc{K}(t)= \sum_{n=0}^\infty \frak{a}_{n}^{\alpha} \, C_{n}^\alpha(t).
\end{align}
Let $2 \leq p \leq \infty$. Then, for any $n \geq k$, we have the following estimate:
\begin{align}\label{20210712_16:52}
|\frak{a}_{n}^{\alpha}| \leq \frac{D_{n,k}^{\alpha}  \left(C_{n-k}^{\alpha+k}(1)\right)^{1 - \frac{2}{p}} \big(\frak{h}_{n-k}^{\alpha+k} \big)^{\frac{2}{p}}}{(\frak{h}_{n}^{\alpha})^2 } \left(\int_{-1}^1 \big|\mc{K}^{(k)}(t)\big|^{p'} (1 - t^2)^{\alpha +k -\frac12}\,\dt \right)^{\frac{1}{p'}} \,,
\end{align}
where $\frac{1}{p} + \frac{1}{p'}=1$, and 
\begin{equation}\label{20210714_10:38}
D_{n,k}^{\alpha} := \prod_{j=0}^{k-1} \frac{2(\alpha + j)}{(n-j)(n + 2\alpha + j)}.
\end{equation}
\end{lemma}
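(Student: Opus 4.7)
The natural starting point is the orthogonality relation for Gegenbauer polynomials: combining \eqref{20210712_15:03} with \eqref{eq:ValueAt1} gives
\[
\frak{a}_n^\alpha \,(\frak{h}_n^\alpha)^2 = \int_{-1}^1 \mc{K}(t)\,C_n^\alpha(t)\,(1-t^2)^{\alpha-\frac12}\,\dt.
\]
The goal is then to transfer $k$ derivatives from $C_n^\alpha$ onto $\mc{K}$ through an iterated integration by parts, producing a gain of roughly $n^{-2}$ per step (which together assemble into the factor $D_{n,k}^\alpha$), and then use H\"older's inequality to separate $\mc{K}^{(k)}$ from the polynomial factor.

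The engine behind the integration by parts is the self-adjoint form of the Gegenbauer ODE, namely
\[
\frac{\d}{\dt}\!\left[(1-t^2)^{\alpha+\frac12}\,(C_n^\alpha)'(t)\right] = -n(n+2\alpha)(1-t^2)^{\alpha-\frac12}\,C_n^\alpha(t),
\]
combined with the differentiation identity $(C_n^\alpha)'(t)= 2\alpha\,C_{n-1}^{\alpha+1}(t)$ (both classical; see \cite{Sz}). Together they yield the key weight-shifting identity
\[
(1-t^2)^{\alpha-\frac12}\,C_n^\alpha(t) = -\frac{2\alpha}{n(n+2\alpha)}\,\frac{\d}{\dt}\!\left[(1-t^2)^{\alpha+\frac12}\,C_{n-1}^{\alpha+1}(t)\right].
\]
Plugging this into the integral above and integrating by parts once lowers $n \mapsto n-1$, raises $\alpha \mapsto \alpha+1$, turns $\mc{K}$ into $\mc{K}'$, and introduces the prefactor $\tfrac{2\alpha}{n(n+2\alpha)}$. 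Iterating $k$ times, the prefactors multiply to exactly $D_{n,k}^\alpha$ as defined in \eqref{20210714_10:38}, producing
\[
\frak{a}_n^\alpha\,(\frak{h}_n^\alpha)^2 = D_{n,k}^\alpha \int_{-1}^1 \mc{K}^{(k)}(t)\,C_{n-k}^{\alpha+k}(t)\,(1-t^2)^{\alpha+k-\frac12}\,\dt.
\]
The (mildly) delicate point is to check that no boundary contribution is left behind at each step. At step $j$ the boundary term involves the quantity $\mc{K}^{(j)}(t)\,(1-t^2)^{\alpha+j+\frac12}\,C_{n-j-1}^{\alpha+j+1}(t)$ evaluated at $t = \pm 1$, and hypothesis \eqref{20210713_18:20} is precisely what is needed: it guarantees that $\mc{K}^{(j)}(t)\,(1-t^2)^{\alpha+j+\frac12} = O\bigl((1-t^2)^{1}\bigr) \to 0$ as $t \to \pm 1$, while $C_{n-j-1}^{\alpha+j+1}$ remains bounded on $[-1,1]$. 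This is the only place where \eqref{20210713_18:20} is used, and I expect it to be the main (but routine) technical step.

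Once the integration by parts is complete, the proof finishes by applying H\"older's inequality with exponents $p,p'$ and controlling the polynomial factor by log-convexity of $L^p$ norms: one interpolates the $L^p$-norm of $C_{n-k}^{\alpha+k}$ against the weight $(1-t^2)^{\alpha+k-\frac12}$ between its $L^\infty$ norm, which equals $C_{n-k}^{\alpha+k}(1)$ by the well-known monotonicity of $|C_m^\beta|$ on $[-1,1]$, and its $L^2$ norm, which equals $\frak{h}_{n-k}^{\alpha+k}$ by the second identity in \eqref{eq:ValueAt1}. Explicitly,
\[
\left(\int_{-1}^1 |C_{n-k}^{\alpha+k}(t)|^p (1-t^2)^{\alpha+k-\frac12}\,\dt\right)^{\!1/p} \le \bigl(C_{n-k}^{\alpha+k}(1)\bigr)^{1-\frac{2}{p}} \bigl(\frak{h}_{n-k}^{\alpha+k}\bigr)^{\frac{2}{p}}.
\]
Combining this with H\"older's bound on $\mc{K}^{(k)}$ and dividing through by $(\frak{h}_n^\alpha)^2$ yields \eqref{20210712_16:52}, completing the proof.
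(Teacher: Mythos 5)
Your proposal is correct and follows essentially the same route as the paper: iterated integration by parts on $\int_{-1}^1 \mc{K}\, C_n^\alpha\,(1-t^2)^{\alpha-\frac12}\,\dt$ to land on $D_{n,k}^\alpha\int_{-1}^1\mc{K}^{(k)}\,C_{n-k}^{\alpha+k}\,(1-t^2)^{\alpha+k-\frac12}\,\dt$, followed by a H\"older-plus-$L^\infty$/$L^2$ interpolation of $C_{n-k}^{\alpha+k}$. The only cosmetic differences are that you rederive the weight-shifting identity from the self-adjoint Gegenbauer ODE and the derivative relation $(C_n^\alpha)'=2\alpha\,C_{n-1}^{\alpha+1}$, whereas the paper quotes the equivalent indefinite integral directly from Abramowitz--Stegun, and that you interpolate the $L^p$ norm of $C_{n-k}^{\alpha+k}$ after H\"older while the paper first factors out $\lvert C_{n-k}^{\alpha+k}\rvert^{\delta}\le C_{n-k}^{\alpha+k}(1)^\delta$ and then applies H\"older --- the computations are identical.
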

\begin{proof} Recall the indefinite integral \cite[Eq. 22.13.2]{AS},
\begin{equation}\label{20210607_10:131}
\int C_n^{\alpha}(t) (1 - t^2)^{\alpha - \frac12}\,\dt =  \frac{- 2\alpha}{n (n + 2\alpha)} \, C_{n-1}^{\alpha+1}(t) (1 - t^2)^{\alpha + \frac12}. 
\end{equation}
From \eqref{20210712_15:03}, we may apply integration by parts $k$ times, using \eqref{20210607_10:131} and \eqref{20210713_18:20} (to eliminate the boundary terms at each iteration\footnote{Condition \eqref{20210713_18:20} can be weakened, but its present form suffices for our purposes.}), to get
\begin{align}\label{20210712_15:12}
\frak{a}_{n}^{\alpha} \,(\frak{h}_{n}^{\alpha})^2   =\int_{-1}^1 \mc{K}(t)\,C_{n}^\alpha(t) \,(1 - t^2)^{\alpha - \frac12}\,\dt = D_{n,k}^{\alpha} \int_{-1}^1 \mc{K}^{(k)}(t)\,C_{n-k}^{\alpha+k}(t) \,(1 - t^2)^{\alpha +k -\frac12}\,\dt.
\end{align}
Let $\delta = 1 - \frac{2}{p} \geq 0$.\footnote{{This is where the hypothesis $p\geq 2$ is needed: in order to have $\delta \geq 0$ and, consequently, the valid inequality \eqref{20210806_11:35}.}} Using that 
\begin{equation}\label{20210806_11:35}
\left|C_{n-k}^{\alpha+k}(t)\right| \leq \left(C_{n-k}^{\alpha+k}(1)\right)^{\delta}\, \left|C_{n-k}^{\alpha+k}(t)\right|^{1-\delta}\,,
\end{equation}
and applying H\"{o}lder's inequality with exponents $p$ and $p'$ below, we observe that the right-hand side of \eqref{20210712_15:12} is, in absolute value, dominated by
\begin{align*}
& \leq D_{n,k}^{\alpha}  \left(C_{n-k}^{\alpha+k}(1)\right)^{\delta}\int_{-1}^1 \left( \big|\mc{K}^{(k)}(t)\big| (1 - t^2)^{(\alpha +k -\frac12)(\frac{1 + \delta}{2})} \right) \,\left(\left|C_{n-k}^{\alpha+k}(t)\right|^{1-\delta}  \, (1 - t^2)^{(\alpha +k -\frac12)(\frac{1 - \delta}{2})}\right)\,\dt\\
& \leq D_{n,k}^{\alpha}  \left(C_{n-k}^{\alpha+k}(1)\right)^{\delta}\left(\int_{-1}^1 \big|\mc{K}^{(k)}(t)\big|^{p'} (1 - t^2)^{\alpha +k -\frac12}\,\dt \right)^{\frac{1}{p'}}  \,\left( \int_{-1}^1C_{n-k}^{\alpha+k}(t)^{2}  \, (1 - t^2)^{\alpha +k -\frac12}\,\dt\right)^{\frac{1}{p}}\\
& = D_{n,k}^{\alpha}  \left(C_{n-k}^{\alpha+k}(1)\right)^{\delta} \big(\frak{h}_{n-k}^{\alpha+k} \big)^{\frac{2}{p}}\left(\int_{-1}^1 \big|\mc{K}^{(k)}(t)\big|^{p'} (1 - t^2)^{\alpha +k -\frac12}\,\dt \right)^{\frac{1}{p'}}.
\end{align*}
This yields the proposed estimate.
\end{proof}
\noindent{\sc Remark}: Observe that in Lemma \ref{Lem_Wang_Sub} we are not specializing to $k = k(d) = \lfloor (d+3)/2 \rfloor$;
rather, it holds for any $k \geq 1$. Further observe that, for fixed $k$, as $n \to \infty$, we have $D_{n,k}^{\alpha} \simeq n^{-2k}$; 
\ $\frak{h}_{n}^{\alpha} \simeq n^{\alpha -1}$; \ $\frak{h}_{n-k}^{\alpha+k} \simeq n^{\alpha + k -1}$;\ $C_{n-k}^{\alpha+k}(1) \simeq n^{2\alpha + 2k -1}$. Hence, assuming that the integral on the right-hand side of \eqref{20210712_16:52} is finite, the dependence on $n$ of \eqref{20210712_16:52} is given by
$$\frac{D_{n,k}^{\alpha}  \left(C_{n-k}^{\alpha+k}(1)\right)^{1 - \frac{2}{p}} \big(\frak{h}_{n-k}^{\alpha+k} \big)^{\frac{2}{p}}}{(\frak{h}_{n}^{\alpha})^2 } \simeq n^{-2k + ({1 - \frac{2}{p}})(2\alpha +2k -1) + \frac{2}{p}(\alpha + k -1) - 2\alpha + 2}.$$
From \eqref{eq:Compareg1} and \eqref{20210528_14:55}, this decay in $n$ (with $\alpha = \nu)$ will suffice for our purposes, provided that 
$$-2k + \big({1 - \tfrac{2}{p}}\big)(2\nu +2k -1) + \tfrac{2}{p}(\nu + k -1) - 2\nu + 2 \leq  -d+1.$$
{This leads us to 
\begin{equation}\label{20210805_10:57}
k \geq \frac{d(p-1) +2}{2}.
\end{equation}
Since $p \geq 2$, inequality \eqref{20210805_10:57} plainly implies that} $k \geq (d+2)/2$ and, since $k$ is an integer, we end up with $k  \geq \lfloor (d+3)/2 \rfloor$ as our minimal regularity assumption in this setup.

\smallskip

From now on we specialize matters to our particular situation by letting, in the notation of Lemma \ref{Lem_Wang_Sub}, 
\begin{equation}\label{20210713_12:36}
\mc{K} = K_g;\ \  \alpha = \nu = \frac{d-2}{2};\ \  k = k(d) = \left\lfloor \frac{d+3}{2} \right\rfloor; \ \  {\rm and} \ p = p(d) = 2 + \left(\frac{1 - (-1)^d}{2}\right)\frac{1}{d}.
\end{equation}
We postpone the discussion of why the function $K_g$ verifies condition \eqref{20210713_18:20} until the next subsection, and for now follow up with a suitable upper bound for the integral appearing on the right-hand side of \eqref{20210712_16:52} .

\begin{lemma} \label{Lem_Wang_Sub_2}
Let $d\in\{3,4,5,6,7\}$. In the notation of Lemma \ref{Lem_Wang_Sub}, with the specialization \eqref{20210713_12:36}, we have
\begin{align*}
\left(\int_{-1}^1 \big|K_g^{(k)}(t)\big|^{p'} (1 - t^2)^{(\nu +k -\frac12)}\,\dt \right)^{\frac{1}{p'}} \leq  \, 2^{\frac{1}{p'}}  \sup_{t \in (-1,1)} \left( \left| K_g^{(k)}(t)\right| \,(2 + 2t)^{k-1}\right). 
\end{align*}
\end{lemma}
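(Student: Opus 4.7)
The idea is to use the auxiliary weight $(2+2t)^{k-1}$ to absorb the potential singularity of $K_g^{(k)}$ at $t=-1$, reducing the estimate to an explicit Beta-function evaluation. Writing
$$|K_g^{(k)}(t)|^{p'} = \big(|K_g^{(k)}(t)|\,(2+2t)^{k-1}\big)^{p'}\cdot (2+2t)^{-(k-1)p'},$$
and pulling the supremum out of the integral, the claim reduces to showing that
$$I_d := \int_{-1}^{1} (2+2t)^{-(k-1)p'}\,(1-t^2)^{\nu+k-\frac12}\,\dt \leq 2.$$

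The substitution $t=2u-1$ (so that $1+t=2u$, $1-t=2(1-u)$) turns $I_d$ into
$$I_d = 2\cdot 4^{\nu+k-\frac12-(k-1)p'}\,B\!\Big(\nu+k+\tfrac12-(k-1)p',\,\nu+k+\tfrac12\Big).$$
The critical observation is that the specific choice of $p=p(d)$ in \eqref{20210713_12:36} is precisely tuned so that the exponent $\nu+k-\tfrac12-(k-1)p'$ equals $-\tfrac12$ in every admissible dimension. Indeed, with $\nu=(d-2)/2$ and $k=\lfloor(d+3)/2\rfloor$, a direct check gives $p'=2$ when $d$ is even and $p'=(2d+1)/(d+1)$ when $d$ is odd, and one verifies case by case that $\nu+k-\tfrac12-(k-1)p'=-\tfrac12$ for every $d\in\{3,4,5,6,7\}$ (this is the single calibration that makes the endpoint singularity at $u=0$ integrable and independent of $d$). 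Consequently,
$$I_d = B\!\big(\tfrac12,\,\nu+k+\tfrac12\big) = \frac{\sqrt{\pi}\,\Gamma(\nu+k+\frac12)}{\Gamma(\nu+k+1)}.$$

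The remaining task is to verify that each of these five explicit Beta values lies below $2$. Using standard Gamma values, this gives for instance $I_3=32/35$, $I_4=105\pi/384$, and similarly strictly subunit ratios in dimensions $5,6,7$; all five are comfortably smaller than $2$. The only real ``obstacle'' is therefore the bookkeeping to confirm the dimension-free cancellation $\nu+k-\tfrac12-(k-1)p'=-\tfrac12$; conceptually, this is precisely the identity that motivates the definition of $p(d)$ in \eqref{20210713_12:36} in the first place. This completes the proof.
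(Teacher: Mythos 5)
Your proof is correct, and it rests on exactly the same two observations as the paper's argument: factor out $\sup_t \big|K_g^{(k)}(t)\big|(2+2t)^{k-1}$, and invoke the calibration identity $(\nu+k-\tfrac12)-p'(k-1)=-\tfrac12$ built into the choice of $p(d)$. The only genuine difference is in the final step: you keep the full weight $(1-t^2)^{\nu+k-1/2}$ and evaluate the resulting integral as a Beta function, observing that it is strictly less than $2$ (indeed less than $1$ in all five dimensions), whereas the paper discards the factor $(1-t)^a$ via the crude bound $(1-t)^a\leq 2^a$ and reduces to $\int_{-1}^1(2+2t)^{-1/2}\,\dt=2$ exactly. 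Your variant thus proves a slightly sharper inequality than the one stated, at the cost of a Gamma-function computation; the paper's route is marginally shorter but saturates the stated constant $2^{1/p'}$. Since the sharper constant is not exploited downstream, the two proofs are interchangeable for the purposes of the paper.
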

\begin{proof} Observe that, for $t \in (-1,1)$,  
 \begin{align}\label{20210713_13:01}
 \begin{split}
 \left| K_g^{(k)}(t)\right| \,(1 - t^2)^{\frac{1}{p'}(\nu +k -\frac12)}& =   \left| K_g^{(k)}(t)\right| \,(1 + t)^{\frac{1}{p'}(\nu +k -\frac12)}\,\,(1 - t)^{\frac{1}{p'}(\nu +k -\frac12)}\\
 & \leq 2^{\frac{1}{p'}(\nu +k -\frac12)} \left| K_g^{(k)}(t)\right| \,(1 + t)^{\frac{1}{p'}(\nu +k -\frac12)}\,\\
 & =  \left| K_g^{(k)}(t)\right| \,(2 + 2t)^{\frac{1}{p'}(\nu +k -\frac12)}\\
 & = \left( \left| K_g^{(k)}(t)\right| \,(2 + 2t)^{k-1}\right) (2 + 2t)^{\frac{1}{p'}(\nu +k -\frac12) - (k-1)}\\
 & \leq  \left[\sup_{t \in (-1,1)}\left( \left| K_g^{(k)}(t)\right| \,(2 + 2t)^{k-1}\right) \right](2 + 2t)^{\frac{1}{p'}(\nu +k -\frac12) - (k-1)}.
 \end{split}
 \end{align}
Note that with the specialization \eqref{20210713_12:36} we have
 \begin{align}\label{20210713_14:36}
 \left(\nu +k -\frac12\right) - p'(k-1) = -\frac{1}{2}.
 \end{align}
 Hence, from \eqref{20210713_13:01} and \eqref{20210713_14:36} we plainly get
 \begin{align}\label{20210806_11:20}
 \left(\int_{-1}^1 \big|K_g^{(k)}(t)\big|^{p'} (1 - t^2)^{\nu +k -\frac12}\,\dt \right)^{\frac{1}{p'}}  \leq  \left[\sup_{t \in (-1,1)} \left(\big| K_g^{(k)}(t)\big| \,(2 + 2t)^{k-1}\right)\right]\left(\int_{-1}^1 (2 + 2t)^{-\frac12}\,\dt \right)^{\frac{1}{p'}}\,,
 \end{align}
which leads us to the desired conclusion.
\end{proof}
\noindent{{\sc Remark:} There is a subtle reason for the particular choice of $p(d)$ in \eqref{20210713_12:36}. The reader may wonder why we are not simply choosing $p=2$ in all cases. The reason is as follows. There are two competing forces for the value of $p$ in our argument. On the one hand, from \eqref{20210805_10:57} one sees that, the smaller the value of $p$, the smaller the number of derivatives we have to require from our function (which we intend to keep to a minimum). On the other hand, larger values of $p$ place us in a better position to control potential singularities arising in the proof of Lemma \ref{Lem_Wang_Sub_2}, a crucial intermediate step in our proof. When the dimension $d$ is even, the choice $p=2$ yields an integer number on the right-hand side of \eqref{20210805_10:57} and we proceed with this choice. When the dimension $d$ is odd, the choice $p=2$ yields an integer plus a half on the right-hand side of \eqref{20210805_10:57}. Since we are not entering the realm of fractional derivatives in this paper, this would force us to move $k$ to the next integer (as such, in some vague sense, we would have half a derivative to spare). Moreover, for odd dimensions $d$, such a choice $p=2$ and $k = \lceil (d+2)/2\rceil = (d+3)/2$ would yield exactly $-1$ in place of $-\frac{1}{2}$ on the right-hand side of \eqref{20210713_14:36}, which in turn would make the corresponding integral on the right-hand side of \eqref{20210806_11:20} diverge. The natural solution is then to use this spare half derivative to increase the value of $p$ slightly, making the right-hand side of \eqref{20210713_12:36} coincide with the integer $(d+3)/2$. This leads us to the choice $p(d)$.}

\subsection{Relating the derivatives of $K_g$ and $K_g^{\star}$} As in \S \ref{sec:analcontKg}, set $s:=(2+2t)^{\frac12}$ and  $K_g^\star(s):=K_g(t)$, with $t \in [-1,1]$ and $s \in [0,2]$. The next task is to express the derivatives of $K_g(t)$ in terms of derivatives of $K_g^{\star}(s)$. We collect the relevant information in the next lemma.
\begin{lemma} \label{Lem12_July13} {Assume that $K_g^\star:(0,2) \to \R$ is sufficiently smooth.} For $t \in (-1,1)$ we have
\begin{align*}
K_g^{(1)}(t) &= (K_g^\star)^{(1)}(s)\,s^{-1}\,;\\
K_g^{(2)}(t)\,(2 + 2t) &= (K_g^\star)^{(2)}(s) -(K_g^\star)^{(1)}(s)\,s^{-1}\,;\\
K_g^{(3)}(t)\,(2 + 2t)^2 &= (K_g^\star)^{(3)}(s)\,s -3(K_g^\star)^{(2)}(s) + 3(K_g^\star)^{(1)}(s) \, s^{-1}\,;\\
K_g^{(4)}(t)\,(2 + 2t)^3 &= (K_g^\star)^{(4)}(s)\,s^2 -6(K_g^\star)^{(3)}(s)\,s + 15(K_g^\star)^{(2)}(s) - 15(K_g^\star)^{(1)}(s)\,s^{-1}\,;\\
K_g^{(5)}(t)\,(2 + 2t)^4 &= (K_g^\star)^{(5)}(s)\,s^3 -10(K_g^\star)^{(4)}(s)\,s^2 + 45(K_g^\star)^{(3)}(s)\,s - 105(K_g^\star)^{(2)}(s) +105(K_g^\star)^{(1)}(s) \,s^{-1}.
\end{align*}
\end{lemma}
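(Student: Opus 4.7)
This is a routine application of the chain rule, so the plan is to exploit the substitution $s = (2+2t)^{1/2}$, which satisfies $\frac{ds}{dt} = s^{-1}$ on $t\in(-1,1)$. This yields the operator identity
$$\frac{d}{dt} = \frac{1}{s}\,\frac{d}{ds}$$
acting on functions of $s = s(t)$. The first formula of the lemma is then immediate.

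For the remaining four identities I would proceed by induction on $k$, under the natural ansatz
$$K_g^{(k)}(t) = \sum_{j=1}^{k} c_{k,j}\,(K_g^\star)^{(j)}(s)\,s^{\,j-2k}$$
(with $c_{1,1}=1$). The inductive step reduces to the termwise identity
$$\frac{1}{s}\,\frac{d}{ds}\!\left[(K_g^\star)^{(j)}(s)\,s^{\,j-2k}\right] = (K_g^\star)^{(j+1)}(s)\,s^{\,j-2k-1} + (j-2k)\,(K_g^\star)^{(j)}(s)\,s^{\,j-2(k+1)},$$
which, after relabelling indices in the first summand, yields the recursion
$$c_{k+1,J} = c_{k,J-1} + (J-2k)\,c_{k,J},\qquad 1\le J\le k+1,$$
with the convention $c_{k,0}=c_{k,k+1}=0$. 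Solving this recursion for $k=2,3,4,5$ gives the coefficient rows $(c_{k,k},\ldots,c_{k,1})$ equal to $(1,-1)$, $(1,-3,3)$, $(1,-6,15,-15)$, and $(1,-10,45,-105,105)$, respectively. Multiplying the inductive formula through by $s^{\,2(k-1)} = (2+2t)^{k-1}$ then puts the five identities into the exact form stated in the lemma.

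There is nothing conceptually delicate here; the only obstacle is the combinatorial bookkeeping needed to verify the coefficients. Smoothness of $K_g^\star$ on $(0,2)$ ensures that every derivative exists, and since $s>0$ throughout the interval of interest, the negative powers of $s$ appearing in the intermediate formulas cause no analytic problems.
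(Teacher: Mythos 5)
Your proof is correct and follows essentially the same route as the paper: both start from the operator identity $\tfrac{d}{dt} = \tfrac{1}{s}\tfrac{d}{ds}$ and iterate it, with the paper presenting the iteration as a direct (five-step) computation while you organize the bookkeeping into the explicit two-term recursion $c_{k+1,J}=c_{k,J-1}+(J-2k)c_{k,J}$. The recursion and the resulting coefficient rows check out, so this is just a slightly more systematic write-up of the paper's own argument.
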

\begin{proof}
Note that, for $t \in (-1,1)$, we have $s \in (0,2)$ and $\frac{\partial s}{\partial t} = \frac{1}{s}$. Hence
$$ \frac{\partial }{\partial t} K_g(t) = \left(\frac{1}{s} \frac{\partial }{\partial s} \right)K_g^{\star}(s).$$
The lemma follows by applying the operator $\left(\frac{1}{s} \frac{\partial }{\partial s} \right)$ to $K_g^{\star}(s)$ a total of $j$ times ($1 \leq j \leq 5$), and then multiplying by $(2 + 2t)^{j-1} = s^{2j-2}$. 
\end{proof}
Recall the representation \eqref{20210601_14:02}--\eqref{eq:III} for $K_g^{\star}(s)$, after the change of variables proposed in \eqref{eq:PrepareIII} (which is performed on  $\textup{III}(s)$ but applies to $\textup{I}(s)$ and $\textup{II}(s)$ as well). Note that the function $\widehat{g}\big|_{\overline{B_4}}$ appears in this expression and its regularity now enters into play. Since $\widehat{g} \in C^{k}(B_4) \cap C^0(\overline{B_4})$, with bounded partial derivatives of order up to $k$ in $B_4$ (condition (R2.C)), expression  \eqref{eq:PrepareIII} and its analogues for $\textup{I}(s)$ and $\textup{II}(s)$ define $K_g^{\star}(s)$ as an even function that belongs $C^k((-2,2)) \cap C^0([-2,2])$, with bounded partial derivatives of order up to $k$ in $(-2,2)$ (for the claim that it is {\it even}, the argument is as in \S \ref{Sec6.2.1}). In particular $(K_g^\star)^{(1)}(0) = 0$ and the mean value theorem yields, for any $s \in (0,2)$, 
\begin{align}\label{20210713_17:13}
\big|(K_g^\star)^{(1)}(s)\big| \leq |s| \max_{{u \in [0,s]}} \left| (K_g^\star)^{(2)}(u)\right|.
\end{align}
As a by-product of Lemma \ref{Lem12_July13} observe that all the functions $K_g^{(j)}(t)\,(2 + 2t)^{j-1}$ ($1 \leq j \leq 5$) are bounded in $(-1,1)$ and hence condition \eqref{20210713_18:20} clearly holds. Our next result bounds the expressions $(K_g^\star)^{(j)}(s)\,s^{j-2}$ ($1 \leq j \leq 5$), appearing in Lemma \ref{Lem12_July13}, in terms of  the supremum of the partial derivatives of $\widehat{g}$. 

{\begin{lemma}\label{Lem13_July13}
Let $d \in \{3,4,5,6,7\}$ and $\frak{r}_d$ as in \eqref{20210720_23:09}. Let {${\mc M}_d = \max_{\alpha} \sup_{\xi \in {B}_{4}} \big| \partial^{\alpha}\widehat{g}(\xi)\big|$}, where the first maximum is taken over all multi-indexes $\alpha \in\Z_{\geq 0}^d$ of the form $\alpha = (\alpha_1, 0, 0 , \ldots, 0)$, with $0 \leq \alpha_1 \leq k(d)$. Then, for $s \in (0,2)$,
\begin{align}
\left|(K_g^\star)^{(j)}(s) \,s^{j-2}\right| & \leq 2^{j-2} \big(j^2 + 3j +6 + (j+2)\frak{r}_d\big) \,\sigma(\sph{d-1})^2 \,\mathcal{M}_d \ \ {\rm for} \ \ 2 \leq j \leq k(d) \label{20210713_16:19_6}
\end{align}
and
\begin{align}
\left|(K_g^\star)^{(1)}(s) \,s^{-1}\right| & \leq \frak{s}_d \,\sigma(\sph{d-1})^2 \,\mathcal{M}_d.  \label{20210713_16:19_1}
\end{align}
Here
\begin{align}\label{20210721_17:06}
\frak{s}_d := \max_{0 \leq s \leq 2}\left( \min\left\{ \frac{s^2 + 2s + 2 + (s + \tfrac12)\frak{r}_d}{s} \ , \ s^2 + 4s + 4 + (s + 2)\frak{r}_d\right\}\right).
\end{align}
\end{lemma}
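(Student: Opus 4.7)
The plan is to differentiate the decomposition $K_g^\star(s) = \textup{I}(s) + \textup{II}(s) - \textup{III}(s)$ from~\eqref{20210601_14:02}--\eqref{eq:III} directly under the integral sign. After placing $\omega_1+\omega_2=se_1$, each of $\textup{I},\textup{II},\textup{III}$ is an integral over $(\omega_3,\omega_4)\in(\sph{d-1})^2$ of $\widehat{g}(se_1+\omega_3+\omega_4)$ against a polynomial in $s$ and $(\omega_3+\omega_4)_1$. Applying Leibniz's rule, every derivative that lands on $\widehat g$ produces a single $\partial_1$ by the chain rule. Since $|se_1+\omega_3+\omega_4|\leq |s|+|\omega_3+\omega_4|\leq 4$ for $s\in[0,2]$, the argument stays in $\overline{B_4}$ and every such partial derivative is bounded pointwise by $\mc{M}_d$. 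The surviving integrals on $(\sph{d-1})^2$ evaluate to $\sigma(\sph{d-1})^2$, to $2\sigma(\sph{d-1})^2$ via~\eqref{20210720_23:08}, or to $\frak{r}_d\sigma(\sph{d-1})^2$ via~\eqref{20210720_23:09}.

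For~\eqref{20210713_16:19_6}, an explicit Leibniz expansion yields $|\textup{I}^{(j)}(s)|\leq (s^2+2js+j(j-1))\sigma(\sph{d-1})^2\mc{M}_d$, $|\textup{II}^{(j)}(s)|\leq 2\sigma(\sph{d-1})^2\mc{M}_d$, and $|\textup{III}^{(j)}(s)|\leq (s+j)\frak{r}_d\sigma(\sph{d-1})^2\mc{M}_d$. Multiplying by $s^{j-2}$, applying $s\leq 2$, and collecting terms gives $2^{j-2}\bigl(4+4j+j(j-1)+2+(j+2)\frak{r}_d\bigr)\sigma(\sph{d-1})^2\mc{M}_d=2^{j-2}\bigl(j^2+3j+6+(j+2)\frak{r}_d\bigr)\sigma(\sph{d-1})^2\mc{M}_d$, which is exactly~\eqref{20210713_16:19_6}.

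For~\eqref{20210713_16:19_1}, I would produce two independent upper bounds on $|(K_g^\star)^{(1)}(s)/s|$ and take their minimum, followed by $\max_{s\in[0,2]}$. The second expression in~\eqref{20210721_17:06} is short: $K_g^\star$ is even by the argument in~\S\ref{Sec6.2.1}, so $(K_g^\star)^{(1)}(0)=0$, and the mean value inequality~\eqref{20210713_17:13} bounds $|(K_g^\star)^{(1)}(s)/s|$ by $\sup_{u\in[0,s]}|(K_g^\star)^{(2)}(u)|$; the $j=2$ computation above \emph{without} the final use of $s\leq 2$ delivers $s^2+4s+4+(s+2)\frak{r}_d$. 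The first expression arises from the direct $j=1$ expansion but with a sharper bound on $\textup{III}'(s)$. Writing $\textup{III}(s)=sF(s)$ with $F(s):=\int\widehat{g}(se_1+\omega_3+\omega_4)(\omega_3+\omega_4)_1\,d\sigma(\omega_3)d\sigma(\omega_4)$, the vanishing $\int(\omega_3+\omega_4)_1\,d\sigma(\omega_3)d\sigma(\omega_4)=0$ allows subtracting $\widehat{g}(se_1)$ from $\widehat{g}(se_1+y)$; combining the reflection symmetry $y_1\mapsto -y_1$ of $\sigma*\sigma$ with the radial form $\widehat{g}(\xi)=G(|\xi|)$ and the estimate $|G(\sqrt{(s+y_1)^2+|y'|^2})-G(\sqrt{(s-y_1)^2+|y'|^2})|\leq 2s\,\mc{M}_d$ (valid because the sum of the two square roots is bounded below by $2|y_1|$), a careful blending with the direct pointwise bound on $F$ produces the $(s+\tfrac12)\frak{r}_d$ factor. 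Combining with $|\textup{I}'(s)|+|\textup{II}'(s)|\leq (s^2+2s+2)\sigma(\sph{d-1})^2\mc{M}_d$ and dividing by $s$ yields the first expression, and $\max_{s\in[0,2]}$ of the minimum produces $\frak{s}_d$.

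The main obstacle will be the sharp bound on $\textup{III}'(s)$: a crude Leibniz estimate gives only $(1+s)\frak{r}_d$, and the reflection-plus-cancellation argument alone only $2s\frak{r}_d$. Attaining the stated $(s+\tfrac12)\frak{r}_d$ requires a careful interpolation between these two estimates, together with the radiality-driven identity $|\nabla\widehat{g}|=|\partial_1\widehat{g}|$ along the $e_1$-axis that ensures $\mc{M}_d$ indeed controls the full gradient of the radial profile $G$ in the direction relevant to the difference of square roots.
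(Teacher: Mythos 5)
Your derivation of the bounds for $\textup{I}^{(j)}$, $\textup{II}^{(j)}$ and $\textup{III}^{(j)}$ with $j\geq 2$, and the arithmetic yielding \eqref{20210713_16:19_6}, are correct and essentially match the paper. The gap is in the $j=1$ case, and you yourself flag it without resolving it: your proposed mechanism for the sharp bound on $\textup{III}'$ does not deliver the stated $\big(s+\tfrac12\big)\frak{r}_d$ factor.

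Concretely, writing $\textup{III}(s)=sF(s)$ and applying your reflection-plus-mean-value argument, one gets $|F(s)|\leq \min(1,s)\,\frak{r}_d\,\sigma(\sph{d-1})^2\mc{M}_d$ (the reflection gives $s\frak{r}_d$, the direct pointwise bound gives $\frak{r}_d$, and there is nothing between). Combined with $|sF'(s)|\leq s\,\frak{r}_d\,\sigma(\sph{d-1})^2\mc{M}_d$, the outcome is
\[
|\textup{III}'(s)|\leq \big(s+\min(1,s)\big)\,\frak{r}_d\,\sigma(\sph{d-1})^2\,\mc{M}_d,
\]
which exceeds $\big(s+\tfrac12\big)\frak{r}_d\,\sigma(\sph{d-1})^2\mc{M}_d$ for every $s>\tfrac12$. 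The ``careful interpolation'' you invoke is just a minimum of the two available bounds, and that minimum equals $\big(s+\tfrac12\big)\frak{r}_d$ only at $s=\tfrac12$; the radiality observation $|\nabla\widehat g|=|\partial_1\widehat g|$ on the $e_1$-axis does not buy anything further here, since $\mc{M}_d$ already controls exactly the $e_1$-directional derivatives that appear. Because the crossover point defining $\frak{s}_d$ in \eqref{20210721_17:06} sits essentially at $s\approx \tfrac12$ (and strictly above $\tfrac12$ in some dimensions), this is not a harmless slack: your argument does not yield the lemma with the $\frak{s}_d$ as stated.

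The missing ingredient is the hypothesis (R3), $\widehat g\geq 0$ on $\overline{B_4}$, which the paper exploits precisely at this point. Splitting $\textup{III}(s)=\textup{III}_+(s)+\textup{III}_-(s)$ according to the sign of $(\omega_3+\omega_4)_1$, the non-negativity of $\widehat g$ forces $\textup{III}_+(s)\geq 0$ and $\textup{III}_-(s)\leq 0$ for $0<s<2$, each piece being bounded by $\tfrac{s}{2}\frak{r}_d\,\sigma(\sph{d-1})^2\mc{M}_d$; opposite signs then give $|\textup{III}(s)|\leq \tfrac{s}{2}\frak{r}_d\,\sigma(\sph{d-1})^2\mc{M}_d$, i.e.\ $|F(s)|\leq \tfrac12\frak{r}_d\,\sigma(\sph{d-1})^2\mc{M}_d$ \emph{uniformly} in $s$, which is what the $\tfrac12$ in the formula for $\frak{s}_d$ records. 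Your reflection-based cancellation is a reasonable alternative idea (and is even slightly better for $s<\tfrac12$), but it is a genuinely weaker estimate for $s>\tfrac12$ and cannot replace the sign argument.
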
}
\noindent {{\sc Remark}: Note that\footnote{In principle, the values of $\frak{s}_d$ can be computed with arbitrary precision, since this amounts to solving a cubic equation on the variable $s$ in \eqref{20210721_17:06}. For simplicity, we shall use the stated bounds in the final computation.}
\begin{equation}\label{20210723_10:15}
7.88<\frak{s}_3 < 7.89 \ \ ; \ \ 7.67<\frak{s}_4 < 7.68\ \ ; \ \ 7.53<\frak{s}_5 < 7.54 \ \ ; \ \ 7.42< \frak{s}_6 < 7.43\ \ ; \ \ 7.34< \frak{s}_7 < 7.35.
\end{equation}}
\begin{proof}
{The idea is relatively simple, and matters boil down to certain standard computations, so we are brief with the details. We will take $j$ derivatives ($0 \leq j \leq k(d)$) of the expressions $\textup{I}(s)$, $\textup{II}(s)$ and $\textup{III}(s)$ in the form \eqref{eq:PrepareIII} (note that derivatives with respect to $s$ on $\widehat{g}$ will be associated to a multi-index $(\alpha_1,0,\ldots,0)$ in the way we set up things), use the triangle inequality and the elementary estimates
\begin{align}\label{20210721_13:53}
|s\,e_1 + \omega_3 + \omega_4| \leq 4; \ \ {\sup_{\xi \in {B}_{4}} \big| \partial^{\alpha}\widehat{g}(\xi)\big| \leq \mathcal{M}_d}.
\end{align}
Following this procedure, and using \eqref{20210720_23:08} when bounding the derivatives of $\textup{II}(s)$, we find,\footnote{The case $j=0$ will be used later on in the argument; see \S \ref{sec:Concl}.} for $0 \leq j \leq k(d)$, 
\begin{align}
\big|\textup{I}^{(j)}(s)\big| &\leq \big(j(j-1) + 2js + s^2\big) \,\sigma(\sph{d-1})^2 \,\mathcal{M}_d,\label{eq:Ijest}\\
\big| \textup{II}^{(j)}(s)\big|& \leq 2 \, \sigma(\sph{d-1})^2 \,\mathcal{M}_d.\label{eq:IIjest}
\end{align}
In the analysis of $\textup{III}(s)$ we can further take advantage of the fact that $\widehat{g}$ is non-negative on $\overline{B}_4$ as follows. Split the integral on the right-hand side of \eqref{eq:PrepareIII} into two integrals
\begin{equation}\label{eq:plusminus}
\textup{III}(s) = \textup{III}_+(s) + \textup{III}_{-}(s),
\end{equation}
where $\textup{III}_+(s)$ (resp. $\textup{III}_-(s)$) is the integral over the region where $(\omega_3 + \omega_4)_1 \geq 0$ (resp. $(\omega_3 + \omega_4)_1 < 0$). Then, for $0 < s < 2$, we have $\textup{III}_+(s)\geq 0$ and $\textup{III}_-(s)\leq0$. Moreover, using \eqref{20210721_13:53} and \eqref{20210720_23:09} we have
\begin{align*}
\big|\textup{III}_{+}(s)\big| & \leq s  \,\mc{M}_d \int_{\substack{(\sph{d-1})^2 \\ (\omega_3+\omega_4)_1 \geq0}}  (\omega_3+\omega_4)_1\, \d\sigma(\omega_3)\,\d\sigma(\omega_4) \\
& = \frac{s}{2}\,\mc{M}_d \int_{(\sph{d-1})^2}   \big|(\omega_3+\omega_4)_1\big| \, \d\sigma(\omega_3)\, \d\sigma(\omega_4) = \frac{s}{2}\,\frak{r}_d \, \sigma(\sph{d-1})^2\mc{M}_d,
\end{align*}
and the exact same bound holds for $\big|\textup{III}_{-}(s)\big|$. 
Going back to \eqref{eq:plusminus} and recalling that $\textup{III}_+(s)$ and $\textup{III}_-(s)$ have opposite signs, it follows that 
\begin{align}\label{20210723_11:08}
\big|\textup{III}(s)\big| \leq \frac{s}{2}\,\frak{r}_d \, \sigma(\sph{d-1})^2\mc{M}_d.
\end{align} 
Arguing similarly for the first derivative $\textup{III}^{(1)}(s)$ (for the part that retains $\widehat{g}$ we proceed as above, and for the part with $\partial^{\alpha}\widehat{g}$ we use \eqref{20210721_13:53} and \eqref{20210720_23:09}) we find
\begin{align}\label{20210721_16:01}
\big|\textup{III}^{(1)}(s)\big| \leq \left(s + \tfrac{1}{2}\right)\frak{r}_d \, \sigma(\sph{d-1})^2\mc{M}_d.
\end{align}
For $2 \leq j \leq k(d)$, only partial derivatives $\partial^{\alpha}\widehat{g}$ with $ |\alpha| \in \{j-1,j\}$ appear in $\textup{III}^{(j)}(s)$. One proceeds by applying the triangle inequality, \eqref{20210721_13:53} and \eqref{20210720_23:09} to get
\begin{align}
\big| \textup{III}^{(j)}(s)\big| & \leq (j + s )\,\frak{r}_d\, \sigma(\sph{d-1})^2 \,\mathcal{M}_d.\label{eq:IIIjest}
\end{align}
Adding up \eqref{eq:Ijest}, \eqref{eq:IIjest} and \eqref{eq:IIIjest} we find, for $2 \leq j \leq k(d)$:
\begin{align}\label{20210721_15:47}
\begin{split}
\left|(K_g^\star)^{(j)}(s)\right| &\leq \big|\textup{I}^{(j)}(s)\big|  + \big|\textup{II}^{(j)}(s)\big| + \big|\textup{III}^{(j)}(s)\big| \\
& \leq \left(\big(j(j-1) + 2js + s^2\big) + 2 + (j + s )\,\frak{r}_d\right)\sigma(\sph{d-1})^2 \,\mathcal{M}_d.
\end{split}
\end{align} 
Multiplying \eqref{20210721_15:47} by $s^{j-2}$ and using that $s \leq 2$ we arrive at \eqref{20210713_16:19_6}.
}

\smallskip

{Note that we already have two upper bounds for $\left|(K_g^\star)^{(1)}(s) \,s^{-1}\right|$. One comes from adding \eqref{eq:Ijest}, \eqref{eq:IIjest} (with $j=1)$ and \eqref{20210721_16:01}, and dividing by $s$, and the other one comes from \eqref{20210713_17:13}, in which we can use \eqref{20210721_15:47} (with $j=2$). We can take the minimum of these two upper bounds, i.e.
\begin{align*}
\left|(K_g^\star)^{(1)}(s) \,s^{-1}\right| & \leq \min\left\{ \frac{s^2 + 2s + 2 + (s + \tfrac12)\frak{r}_d}{s} \ , \ s^2 + 4s + 4 + (s + 2)\frak{r}_d\right\} \sigma(\sph{d-1})^2 \,\mathcal{M}_d\,,
\end{align*}
which leads to \eqref{20210713_16:19_1}.
}
\end{proof}

By the triangle inequality,  Lemmas \ref{Lem12_July13} and \ref{Lem13_July13} together yield the following bounds.

\begin{lemma}\label{Lem14_July14}
Let $d \in \{3,4,5,6,7\}$, $\frak{r}_d$ as in \eqref{20210720_23:09} and $\frak{s}_d$ as in \eqref{20210721_17:06}. Let ${\mc M}_d = \max_{\alpha} \sup_{\xi \in {B}_{4}} \big| \partial^{\alpha}\widehat{g}(\xi)\big|$, where the first maximum is taken over all multi-indexes $\alpha \in\Z_{\geq 0}^d$ of the form $\alpha = (\alpha_1, 0, 0 , \ldots, 0)$, with $0 \leq \alpha_1 \leq k(d)$. Then, for $t \in (-1,1)$, 
\begin{align*}
\big|K_g^{(j)}(t)\,(2 + 2t)^{j-1} \big| \leq {b_{d,j}} \, \sigma(\sph{d-1})^2 \,\mathcal{M}_d \ \ \ \ {(1 \leq j \leq k(d))}\,,
\end{align*}
with
\begin{align}
\begin{split}\label{20210723_10:16}
b_{d,1} & = \frak{s}_d \ \  \ ;\  \  \ b_{d,2}  = 16 + 4\frak{r}_d +\frak{s}_d \  \ \ ; \  \ \ b_{d,3}=96 + 22\frak{r}_d +3\frak{s}_d\ \ ; \\
 b_{d,4} &= 664 + 144 \frak{r}_d + 15 \frak{s}_d \  \ \ ;\ \  \ b_{d,5} = 5568 + 1166\frak{r}_d + 105\frak{s}_d.
\end{split}
\end{align}
\end{lemma}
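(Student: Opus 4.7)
The proof proposal is a direct arithmetic bookkeeping exercise, combining the identities of Lemma \ref{Lem12_July13} with the bounds of Lemma \ref{Lem13_July13}. The plan is to apply the triangle inequality to the five chain-rule identities, and then absorb each summand of the form $(K_g^{\star})^{(i)}(s)\,s^{i-2}$ into the appropriate estimate from \eqref{20210713_16:19_6} or \eqref{20210713_16:19_1}. The crucial observation is that the weight $(2+2t)^{j-1}=s^{2j-2}$ distributes across the expansion of $K_g^{(j)}(t)\,(2+2t)^{j-1}$ in such a way that every summand acquires precisely the power $s^{i-2}$ needed to invoke the bounds of Lemma \ref{Lem13_July13} (which is exactly why those bounds were phrased with that particular weight).

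Concretely, I would proceed case by case. For $j=1$, the identity $K_g^{(1)}(t)=(K_g^{\star})^{(1)}(s)\,s^{-1}$ combined with \eqref{20210713_16:19_1} yields $b_{d,1}=\frak{s}_d$. For $j=2$, I would use $K_g^{(2)}(t)\,(2+2t)=(K_g^{\star})^{(2)}(s)-(K_g^{\star})^{(1)}(s)\,s^{-1}$ and bound the first term by $2^{0}(4+6+6+4\frak{r}_d)=16+4\frak{r}_d$ via \eqref{20210713_16:19_6} with $j=2$, and the second by $\frak{s}_d$ via \eqref{20210713_16:19_1}, giving $b_{d,2}=16+4\frak{r}_d+\frak{s}_d$. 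For $j=3,4,5$ the same procedure applies: expanded via Lemma \ref{Lem12_July13}, the coefficients of $(K_g^{\star})^{(i)}(s)\,s^{i-2}$ are absolute integers (respectively $1,3,3$; then $1,6,15,15$; then $1,10,45,105,105$), each multiplied by the corresponding bound from Lemmas \ref{Lem13_July13}.

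For example, for $j=5$ I would write
\begin{align*}
\bigl|K_g^{(5)}(t)\,(2+2t)^{4}\bigr| & \leq \bigl|(K_g^{\star})^{(5)}(s)\,s^{3}\bigr|+10\bigl|(K_g^{\star})^{(4)}(s)\,s^{2}\bigr|+45\bigl|(K_g^{\star})^{(3)}(s)\,s\bigr|\\
& \quad +105\bigl|(K_g^{\star})^{(2)}(s)\bigr|+105\bigl|(K_g^{\star})^{(1)}(s)\,s^{-1}\bigr|,
\end{align*}
and feed in the estimates $368+56\frak{r}_d$, $136+24\frak{r}_d$, $48+10\frak{r}_d$, $16+4\frak{r}_d$, $\frak{s}_d$ respectively (the first four coming from \eqref{20210713_16:19_6} with $j=5,4,3,2$ and the last from \eqref{20210713_16:19_1}). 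Collecting the $\frak{r}_d$ and $\frak{s}_d$ coefficients would produce $b_{d,5}=5568+1166\frak{r}_d+105\frak{s}_d$, matching \eqref{20210723_10:16}.

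There is no substantial obstacle here: the argument is mechanical once Lemmas \ref{Lem12_July13} and \ref{Lem13_July13} are in place. The only mild subtlety is making sure that the constants in \eqref{20210723_10:16} are correctly tallied; this is just the bookkeeping of $k(d)=5$ linear combinations with integer coefficients, and can be safely checked by hand or with minimal computer aid.
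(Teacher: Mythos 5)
Your proposal is correct and takes exactly the paper's approach: the paper's entire proof is the one-line remark that Lemma \ref{Lem14_July14} follows from the triangle inequality applied to the identities of Lemma \ref{Lem12_July13} together with the bounds of Lemma \ref{Lem13_July13}. Your arithmetic also checks out (for instance, $2^{j-2}(j^2+3j+6+(j+2)\frak{r}_d)$ at $j=5,4,3,2$ indeed gives $368+56\frak{r}_d$, $136+24\frak{r}_d$, $48+10\frak{r}_d$, $16+4\frak{r}_d$, and the tallies for $b_{d,j}$ match \eqref{20210723_10:16}).
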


\subsection{Final comparison} Recall that we are working under the specialization \eqref{20210713_12:36}, and the Gegenbauer expansion of $K_g$ is given by \eqref{20210528_14:54}. For $2\ell \geq k$, in light of \eqref{20210528_14:55} and Lemmas \ref{Lem_Wang_Sub}, \ref{Lem_Wang_Sub_2}, \ref{Lem14_July14}, we have
\begin{align}\label{20210714_10:33}
\big|\lambda_{d,g}(2\ell)\big| = \frac{2 \,\pi^{\nu +1}}{(2\ell + \nu) \, \Gamma(\nu)} \ \big|a_{2\ell}^{\nu}\big| \leq \left( \frac{2 \,\pi^{\nu +1}}{(2\ell + \nu) \, \Gamma(\nu)} \right) \left(\frac{D_{2\ell,k}^{\nu}  \left(C_{2\ell-k}^{\nu+k}(1)\right)^{1 - \frac{2}{p}} \big(\frak{h}_{2\ell-k}^{\nu+k} \big)^{\frac{2}{p}}}{(\frak{h}_{2\ell}^{\nu})^2 }\right) 2^{\frac{1}{p'}}\, {b_{d,k}} \, \sigma(\sph{d-1})^2 \,\mathcal{M}_d.
\end{align}
Writing the bounds on the right-hand sides of \eqref{20210601_09:57}--\eqref{20210601_09:61} as $- c_d\,\ell^{-d}$, from \eqref{eq:Compareg1} and \eqref{20210714_10:33} we seek
\begin{align}\label{20210714_10:43}
\left( \frac{2 \,\pi^{\nu +1}}{(2\ell + \nu) \, \Gamma(\nu)} \right) \left(\frac{D_{2\ell,k}^{\nu}  \left(C_{2\ell-k}^{\nu+k}(1)\right)^{1 - \frac{2}{p}} \big(\frak{h}_{2\ell-k}^{\nu+k} \big)^{\frac{2}{p}}}{(\frak{h}_{2\ell}^{\nu})^2 }\right) 2^{\frac{1}{p'}}\, {b_{d,k}} \, \sigma(\sph{d-1})^2 \,\mathcal{M}_d < c_d \, \ell^{-d}.
\end{align}
We now multiply both sides by $(c_d^{-1}2^{-d}) (2\ell)^d$ and  plug in the definitions of $D_{2\ell,k}^{\nu}$ in \eqref{20210714_10:38}, and $C_{2\ell-k}^{\nu+k}(1)$, $\frak{h}_{2\ell-k}^{\nu+k}$, $\frak{h}_{2\ell}^{\nu}$ in \eqref{eq:ValueAt1}. By isolating the terms that depend on $\ell$, we arrive at the following reformulation of \eqref{20210714_10:43}:
\begin{align}\label{20210715_17:39}
\beta_d \, G_d(\ell) \, \mathcal{M}_d < 1,
\end{align}
where 
\begin{align}\label{20210715_17:43}
\beta _d = \left(c_d^{-1}\,2^{-d} \right) \left(2^{\frac{1}{p'}}\, {b_{d,k}} \, \sigma(\sph{d-1})^2 \right) \! \left( \frac{2 \,\pi^{\nu +1}}{ \Gamma(\nu)} \right)\left(\prod_{j=0}^{k-1} 2(\nu +j) \!\right) \left(\frac{1}{\Gamma(2\nu + 2k)}\right)^{1 - \frac{2}{p}} \! \left( \frac{\pi \, 2^{1 - 2\nu - 2k} }{\Gamma(\nu + k)^2}\right)^{\frac{1}{p}} \! \left(\frac{\Gamma(\nu)^2}{\pi\, 2^{1 - 2\nu}}\right),
\end{align}
and, provided $2\ell\geq k$, we have that
\begin{align}
G_d(\ell) & := (2\ell)^d \left(\frac{1}{(2\ell + \nu)}\right)   \left(\prod_{j=0}^{k-1} \frac{1}{(2\ell -j)(2 \ell + 2\nu  + j)}\right)  \left( \frac{\Gamma(2\ell + 2\nu +k)}{(2\ell - k)!}\right)^{1 - \frac{2}{p}}  \nonumber \\
& \qquad \qquad   \qquad \qquad \qquad \qquad \qquad \times \left( \frac{\Gamma(2\ell + 2\nu + k)}{(2\ell - k)!\, (2\ell + \nu)}\right)^{\frac{1}{p}}\left(\frac{(2\ell)!\, (2\ell + \nu)}{\Gamma(2\ell + 2\nu)}\right) \label{20210714_11:48} \\
& = \frac{(2\ell)^{d}}{\left[(2\ell + \nu) \left(\prod_{j=0}^{k-1} (2\ell -j)(2 \ell + 2\nu  + j) \right) \left(\prod_{r = 1}^{2\nu -1}(2 \ell + r)\right)\right]^{\frac{1}{p}}}. \label{20210715_15:42}
\end{align}
In \eqref{20210714_11:48} we left clear where each term is coming from in \eqref{20210714_10:43}, and in \eqref{20210715_15:42} we proceeded with the full simplification, taking advantage of the fact that $2 \nu \in \N$ and the Gamma functions are all classical factorials. 

\subsubsection{The upper bound for $G_d$} With our specialization \eqref{20210713_12:36} one can check directly from \eqref{20210715_15:42} that $\lim_{\ell \to \infty} G_d(\ell) = 1$. Moreover, we actually have 
\begin{align}\label{20210715_15:48}
G_d(\ell) \leq 1
\end{align}
for any $\ell$ with $2\ell \geq k$. In order to establish \eqref{20210715_15:48}, we note that the terms in the first product in the denominator of \eqref{20210715_15:42} verify
\begin{align}\label{20210715_15:56}
(2\ell -1)(2\ell + 2\nu +1) \geq (2\ell -2)(2\ell + 2\nu +2) \geq  \ldots \geq  (2\ell -k +1)(2\ell + 2\nu +k-1) \,,
\end{align}
and the latter verifies
\begin{align}\label{20210715_15:57}
(2\ell -k +1)(2\ell + 2\nu +k-1)  \geq (2\ell)^2
\end{align}
provided $2\ell \geq (k-1)(2\nu + k-1) / (2\nu)$. This happens almost always, in which case \eqref{20210715_15:56} and \eqref{20210715_15:57} easily lead to \eqref{20210715_15:48}. The only cases left open (recall that we are assuming that $2\ell \geq k$) are $\ell=2$ in dimensions $d \in \{3,5,6\}$ and $\ell = 3$ in dimension $d=7$. In these cases, one simply checks directly that \eqref{20210715_15:48} holds.

\subsubsection{Conclusion}\label{sec:Concl} In light of \eqref{20210715_17:39} and \eqref{20210715_15:48} it suffices to have, for $2\ell \geq k$,  
\begin{equation}\label{20210715_18:58}
\mc{M}_d < \frac{1}{\beta_d}=: \mc{C}_d.
\end{equation}
{Now it is matter of carefully evaluating \eqref{20210715_17:43}. The bounds in \eqref{20210723_10:15} for $\frak{s}_d$, applied in the definition of $b_{k,d}$ in \eqref{20210723_10:16}, suffice to give us a 3-digit precision:}
\begin{align}\label{20210715_19:00}
{\mc{C}_3 = 0.157\ldots \ \ ; \ \ \mc{C}_4= 0.918\ldots \ \ ; \ \ \mc{C}_5 = 0.908\ldots \ \ ; \ \ \mc{C}_6 = 1.099\ldots\ \ ; \ \ \mc{C}_7 = 0.534\ldots\ .}
\end{align}

There is a final minor point left, which is to ensure the validity of \eqref{eq:Compareg1} for $2 \leq 2\ell < k$ (that is, $\ell = 1$ in dimensions $d \in \{3,4,5,6\}$, and $\ell\in\{1,2\}$ in dimension $d=7$). To see this, we start with the definition of the Gegenbauer coefficient $a_{2\ell}^{\nu}$ in \eqref{20210528_14:54} (recall also \eqref{eq:ValueAt1}), and apply the Cauchy--Schwarz inequality to obtain
\begin{align}\label{20210715_18:49}
\begin{split}
\big|a_{2\ell}^{\nu} \,(\frak{h}_{2\ell}^{\nu})^2 \big|  & =\left|\int_{-1}^1 K_g(t)\,C_{2\ell}^\nu(t) \,(1 - t^2)^{\nu - \frac12}\,\dt\right| \leq \frak{h}_{2\ell}^{\nu} \left(\int_{-1}^1 K_g(t)^2\,(1 - t^2)^{\nu - \frac12}\,\dt\right)^{\frac{1}{2}}\\
& \leq \frak{h}_{2\ell}^{\nu} \, \sup_{t \in (-1,1)}|K_g(t)|\, \left(\int_{-1}^1(1 - t^2)^{\nu - \frac12}\,\dt\right)^{\frac{1}{2}}\\
& =  \frak{h}_{2\ell}^{\nu}   \, \,\frak{h}_{0}^{\nu} \, \sup_{t \in (-1,1)}|K_g(t)|.
\end{split}
\end{align}
Invoking {\eqref{eq:Ijest}, \eqref{eq:IIjest} (when $j=0$) and \eqref{20210723_11:08}}, we find that 
\begin{equation}\label{20210715_18:50}
\sup_{t \in (-1,1)}|K_g(t)| {=} \sup_{s \in (0,2)}|K_g^\star(s)| \leq {(6 + \frak{r}_d)} \,\sigma(\sph{d-1})^2 \,\mathcal{M}_d.
\end{equation}
Then \eqref{20210528_14:55}, \eqref{20210715_18:49} and \eqref{20210715_18:50} plainly imply that 
\begin{align}\label{20210715_18:59}
\big|\lambda_{d,g}(2\ell) \big|= \frac{2 \,\pi^{\nu +1}}{(2\ell + \nu) \, \Gamma(\nu)} \ \big|a_{2\ell}^{\nu}\big| \leq \frac{2\,{(6 + \frak{r}_d)} \,\pi^{\nu +1}   \, \sigma(\sph{d-1})^2\, \frak{h}_{0}^{\nu}}{(2\ell + \nu) \, \Gamma(\nu)\,\frak{h}_{2\ell}^{\nu}}\, \mathcal{M}_d.
\end{align}
One can then directly check that condition \eqref{20210715_18:58}--\eqref{20210715_19:00} also implies that the right-hand side of \eqref{20210715_18:59} is strictly less than $c_d\ell^{-d}$ in the remaining cases $2 \leq 2\ell < k$. 

\smallskip

This concludes the proof of Lemma \ref{Lem6_crux} in the $C^k$-case, {and hence the proof of Theorem \ref{Thm1b}.}  

\section{Full classification of maximizers: proof of Theorem \ref{Thm2}}\label{sec:classification} Consider the subclass $\mc{X} \subset L^2(\sph{d-1})$ of characters given by
\begin{equation*}
\mc{X} = \big\{ f \in L^2(\sph{d-1})\ \ ; \ \ f(\omega) = c\,e^{iy \cdot  \omega}, \ {\rm for \ some}\  y \in \R^d \  {\rm and} \  c \in \C \setminus \{0\}\big\}.
\end{equation*}
\subsection{Part (i)} If $\widehat{g}= \bf{0}$ on the ball $\overline{B_4}$, for any $f \in L^2(\sph{d-1})$ we note that 
\begin{align*}
\int_{\R^d} \big|\widehat{f\sigma}(x)\big|^4\,g(x)\,\d x = \int_{(\sph{d-1})^4} \widehat g(\omega_1+\omega_2-\omega_3-\omega_4)  \prod_{j=1}^4 f_j(\omega_j) \, \d\sigma(\omega_j) = 0.
\end{align*}
Hence \eqref{20210717_00:33} and \eqref{eq:Q_h_decomp} yield
\begin{align*}
 \int_{\R^d} \big|\widehat{f\sigma}(x)\big|^4\,h(x)\,\d x = (2\pi)^d\,\big\|f\sigma* f\sigma\big\|_{L^2(\R^d)}^2 =  \int_{\R^d} \big|\widehat{f\sigma}(x)\big|^4\,\d x\,,
\end{align*}
and the complex-valued maximizers of \eqref{20210524_17:54} coincide with the complex-valued maximizers of the unperturbed adjoint Fourier restriction inequality, which have been classified in \cite{COS15, Fo15}. This is exactly the subclass $\mc{X}$.

\subsection{Part (ii)}\label{sec:Partii} The following chain of inequalities contains all the steps we followed in the previous sections in order to prove our sharp inequality:
    \begin{align}\label{eq:big_chain_ineqs}
    \begin{split}
            \left| \int_{\mathbb R^d} \big| \widehat{f\sigma}(x) \big|^4h(x)\, \d x\right|  &= \left| Q_h(f, f, \overline f, \overline f)\right| \\ 
            &\le Q_h(\lvert f\rvert, \lvert f\rvert, \lvert f\rvert, \lvert f\rvert) \qquad \ (\text{Lemma~\ref{Lem2_non-negative}})\\ 
            &\le Q_h(\lvert f\rvert_\sharp, \lvert f\rvert_\sharp, \lvert f\rvert_\sharp, \lvert f\rvert_\sharp) \ \ \ (\text{Lemma~\ref{Lem3_even}})\\ 
            &\le \tfrac14 H_{d, h}(\lvert f\rvert_\sharp^2) \qquad \qquad \ \ \ \, \ (\text{Section ~\ref{sec:cauchy_schwarz}})\\ 
            &\le  \frac{H_{d, h}(\boldsymbol{1})}{4\,\sigma(\mathbb S^{d-1})^2}\lVert f\rVert_{L^2(\mathbb S^{d-1})}^4 \,\,\,\,\,\, (\text{Prop.~\ref{Prop5}}).
        \end{split}
        \end{align}
This chain is sharp because all the inequalities above are equalities if $f$ is constant. So, if at least one of these inequalities is strict, then $f$ is not a maximizer.

\smallskip

We claim that, if $f \in L^2(\sph{d-1})$ is a maximizer, then $f \in \mc{X}$. In fact, by the conditions for equality in Lemma~\ref{Lem2_non-negative}, Lemma~\ref{Lem3_even} and Proposition~\ref{Prop5} (recall that in Lemma~\ref{Lem2_non-negative} we only stated a necessary condition) any maximizer  $f \in L^2(\sph{d-1})$ must verify   
     \begin{equation}\label{eq:necessary_conditions}
            \lVert f\sigma\ast f\sigma\rVert_{L^2(\mathbb R^d)}=\lVert \lvert f\rvert\sigma\ast \lvert f \rvert\sigma\rVert_{L^2(\mathbb R^d)} \ \ , \ \   \lvert f\rvert =\lvert f\rvert_\sharp \ \  \text{and } \ \ \lvert f\rvert_\sharp=\gamma\, \bf{1}\,, 
    \end{equation}
where $\gamma >0$ is a constant. These are exactly the same conditions that were used in \cite{COS15} in order to show that $f \in \mc{X}$. We briefly recall the argument. The first condition in \eqref{eq:necessary_conditions} implies that there exists a measurable function $\Phi: \overline{B_2} \to \C$ such that 
  \begin{equation}\label{eq:functional_equation}
        f(\omega_1)f(\omega_2)=\Phi(\omega_1+\omega_2)\lvert f (\omega_1) f(\omega_2)\rvert
    \end{equation}
for $\sigma^2$-a.e.\@ $(\omega_1, \omega_2) \in (\sph{d-1})^2$; see~\cite[Lemma~8]{COS15}. By the second and third conditions in \eqref{eq:necessary_conditions}, relation \eqref{eq:functional_equation} becomes
 \begin{equation*}
        f(\omega_1)f(\omega_2)=\gamma^2\, \Phi(\omega_1+\omega_2).   \end{equation*}
The only solutions to this functional equation are of the form $f(\omega)=c\, e^{\zeta\cdot \omega}$, for $\zeta\in \mathbb C^d$ and $c\in\mathbb C\setminus\{0\}$; see~\cite[Theorem~4]{COS15}. Finally, since $\lvert f \rvert$ is constant, we must have $\zeta=i y$ for some $y \in\mathbb R^d$, and $f \in \mc{X}$ as claimed.

\smallskip

This does not mean that {\it any} $f \in \mc{X}$ is a maximizer. In fact, we now verify that only the constant functions are maximizers in the general case. This is a distinct feature of our weighted setup.

\smallskip

Let $f(\omega)=c\,e^{iy\cdot \omega}$ for $y \in \R^d$ and $c \in \C\setminus\{0\}$. By our assumptions, since $\widehat{g}$ is continuous, non-negative, and not identically zero on $\overline{B_4}$, there is a subset $A\subset B_4$ of positive measure such that $\widehat{g}(\xi)>0$ for every $\xi\in A$. Then
\begin{equation}\label{eq:G_function}
    \frac{1}{|c|^4}\int_{\mathbb R^d} \lvert \widehat{f\sigma}(x)\rvert^4 \,h(x)\, \d x = \int_{\mathbb R^d} \lvert \widehat{\sigma}(x)\rvert^4\, \d x+\int_{\mathbb R^d} \lvert \widehat{\sigma}(x)\rvert^4 \,g(x-y)\, \d x.
\end{equation}
Define 
\begin{equation*}
\displaystyle G(y):=\int_{\mathbb R^d} \lvert \widehat{\sigma}(x)\rvert^4 \,g(x-y) \,\d x.
\end{equation*}
We claim that
\begin{equation}\label{eq:G_max_zero}
    \begin{array}{cc}
        |G(y)| <G(0)
    \end{array}
\end{equation}
for any $y \in \R^d \setminus\{0\}$. Using the triangle inequality in \eqref{eq:G_function}, followed by an application of \eqref{eq:G_max_zero}, we then conclude that the only maximizers to~\eqref{20210524_17:54} are the constant functions. To prove our claim, we start by observing that {$G \in L^2(\R^d)$}, and that 
\begin{equation}\label{eq:Ghat_Positive}
        {\displaystyle\widehat{G}(z)= (2\pi)^d\,\widehat{g}(z)\, (\sigma\ast\sigma\ast \sigma\ast \sigma)(z).}
\end{equation}
Hence $\widehat{G}$ is supported on $\overline{B_4}$, and is radial and non-negative. Moreover, $\widehat{G}(z) >0$ on $A$ since $(\sigma\ast\sigma\ast \sigma\ast \sigma)(z)>0$ on $B_4$. By Fourier inversion, for $y\in\mathbb R^d\setminus\{0\}$, we then have
\begin{align*}
G(0) \pm G(y) & = (2\pi)^{-d} \int_{\overline{B_4}} \widehat{G}(z) \left(1 \pm e^{-iy\cdot z}\right)\,\d z = (2\pi)^{-d} \int_{\overline{B_4}} \widehat{G}(z) \,\big(1 \pm \cos(y\cdot z)\big)\,\d z\\
& \geq (2\pi)^{-d} \int_{A} \widehat{G}(z) \,\big(1 \pm \cos(y\cdot z)\big)\,\d z >0, 
\end{align*}
since $1 \pm \cos(y\cdot z) >0$ for a.e. $z \in \R^d$. The claim~\eqref{eq:G_max_zero} readily follows, and the proof of Theorem \ref{Thm2} is now complete.

\section{Some related examples} \label{Sec_9}
In this section we highlight a few examples that motivate our choice of assumptions: the radiality condition in (R1), the non-negativity condition in (R1), and the smallness condition in \eqref{20210525_09:45} and \eqref{20210710_19:14} (associated to (R2.A) and (R2.C), respectively). This is to be understood in the following sense: if one of these conditions is removed entirely (while keeping the other ones) it is possible to construct examples of perturbations $g$ for which the constant functions fail to maximize  \eqref{20210524_17:54}.
In the last subsection, we further address the particular case of quadratic weights; this leads to an improved result in dimensions $d\in\{3,4,5,6,7\}$, and to a new sharp inequality when $d=8$ {(Theorem \ref{thm:NewSharpIneq} above).}

\subsection{Critical points and radiality}
In light of inequality \eqref{20210524_17:54}, it is natural to consider the functional
\begin{equation}\label{eq:PhiFunctional}
    \Phi(f)=\frac{\lvert \int_{\R^d} |\widehat{f\sigma}(x)|^4 h(x) \d x\rvert }{ \|f\|_{L^2(\sph{d-1})}^4}.
\end{equation}
Here $h={\bf 1}+g$, and we assume for the next proposition that $g$ satisfies (R1), with ``radial" replaced merely by ``even", and (R4.A) or (R4.C). The constant function $\mathbf{1}$ is a maximizer of~\eqref{20210709_22:16} if and only if $\Phi(f)\le \Phi(\mathbf 1)$ for all $f\in L^2(\mathbb S^{d-1})$. 
A necessary condition for this is that ${\bf 1}$ be a critical point of the functional $\Phi$, and we provide a characterization of this condition in the following result. Note that, by the non-negativity of $\widehat{g}\big|_{\overline{B_4}}$, the numerator of~\eqref{eq:PhiFunctional} is strictly positive when $f=\mathbf{1} $ (recall~\eqref{20210526_15:39}), hence $\Phi$ is differentiable at $\mathbf{1}$. 
% The following results gives a necessary and sufficient condition for the constant function ${\bf 1}$ to be a critical point of the functional $\Phi$.

\begin{proposition}[Motivation for the radiality assumption]\label{prop:criticalpoints}
The function ${\bf 1}$ is a critical point of $\Phi$ if and only if $\widehat g\big|_{\overline{B_4}}\ast \sigma^{\ast 3}$ is constant on $\sph{d-1}$.
\end{proposition}

\noindent As an immediate consequence, there exist non-radial even functions $\widehat{g}:\overline{B_4} \to\R_{\geq 0}$ for which ${\bf 1}$ fails to maximize the corresponding inequality \eqref{20210524_17:54} (but see the remark after the proof of Proposition \ref{prop:criticalpoints}). 
A simple example is obtained by setting $\xi_\pm:=(\pm 4,0,\ldots,0)\in\R^d$, and considering a non-zero, even function $\widehat g:\R^d\to\R_{\geq 0}$ which is supported on the closure of $B(\xi_-,1)\cup B(\xi_+,1)$ and strictly positive on $B(\xi_-,1/2)\cup B(\xi_+,1/2)$. In this case, $(\widehat g\big|_{\overline{B_4}}\ast \sigma^{\ast 3})(\pm1,0,\ldots,0)>0$, but since the support of the latter function does not contain $\sph{d-1}$, it cannot be constant there, and by Proposition \ref{prop:criticalpoints} the function {\bf 1} is not a critical point of $\Phi$.  
\begin{proof}[Proof of Proposition \ref{prop:criticalpoints}]
As in \cite[\S 16]{ChSh1}, we may restrict attention to functions of the form $f={\bf 1}+\lambda\varphi$, where $\varphi\perp{\bf 1}$, $\varphi$ is real-valued and even, $\lambda>0$ is small enough, and $\|\varphi\|_{L^2(\sph{d-1})}=1$.
A straightforward calculation yields
\[\frac{\d}{\d\lambda}\bigg\rvert_{\lambda=0} \Phi({\bf 1}+\lambda\varphi)
=4\|{\bf 1}\|_{L^2(\sph{d-1})}^{-4} \int_{\R^d} \widehat{\varphi\sigma}(x)\,\widehat{\sigma}^3(x)\,h(x)\,\d x 
\simeq_d \int_{\R^d} (\varphi\sigma\ast\sigma^{\ast 3})(\xi) \,\widehat{h}(\xi) \,\d\xi,\] 
where the last identity follows from Plancherel's theorem. That $\widehat{h}$ can be replaced by $\widehat{g}$ in the latter integral follows from \eqref{eq:hHat} together with the observation that\footnote{By a slight abuse of notation, we are using the fact that the spherical convolution defines a radial function on $\R^d$ and, given $r\geq 0$, denote by $\sigma^{\ast 3}(r)$ the value attained on the sphere $|\xi|=r$.}
\[\int_{\R^d} (\varphi\sigma\ast\sigma^{\ast 3})(\xi) \,\boldsymbol{\delta}(\xi) \,\d\xi = (\varphi\sigma\ast\sigma^{\ast 3})(0)=\sigma^{\ast 3}(1)\int_{\sph{d-1}}\varphi\,\d\sigma=0.\]
Consequently,
\begin{align}
\frac{\d}{\d\lambda}\bigg\rvert_{\lambda=0} \Phi({\bf 1}+\lambda\varphi)
&\simeq_d \int_{\R^d} (\varphi\sigma\ast\sigma^{\ast 3})(\xi) \,\widehat{g}(\xi)\, \d\xi
=\int_{B_4} \int_{(\sph{d-1})^4} \varphi(\omega_1)\ddirac{\xi-\sum_{j=1}^4\omega_j} \d\sigma(\vec\omega) \,\widehat{g}(\xi) \,\d\xi\notag\\
&=\int_{(\sph{d-1})^4} \varphi(\omega_1)\,\widehat{g}\left(\sum_{j=1}^4\omega_j\right)\d\sigma(\vec\omega) =\int_{\sph{d-1}}\varphi\gamma\,\d\sigma,\label{eq:phigamma}
\end{align}
where $\gamma(\omega):=\int_{(\sph{d-1})^3}\widehat{g}\left(\omega+\sum_{j=2}^4\omega_j\right)\d\sigma(\omega_2)\d\sigma(\omega_3)\d\sigma(\omega_4)$.
One easily checks that $\gamma=(\widehat g\big|_{\overline{B_4}}\ast\sigma^{\ast 3})\big|_{\sph{d-1}}$.
From \eqref{eq:phigamma}, it then follows that ${\bf 1}$ is a critical point of $\Phi$ if $\gamma$ is constant, which establishes the first assertion of the proposition.
For the converse direction, start by noting that $\gamma$ defines a continuous, even, non-negative function on $\sph{d-1}$ under our assumptions.
If $\gamma$ is not constant, then there exist $\omega_1\neq\pm\omega_2\in\sph{d-1}$ such that $\gamma(\omega_1)>\gamma(\omega_2)\geq 0$.
Let $\delta>0$ be small enough, such that
\begin{equation}\label{eq:choosedelta}
\inf_{\omega\in\mathcal C(\omega_1,\delta)} \gamma(\omega)>\sup_{\omega\in\mathcal C(\omega_2,\delta)} \gamma(\omega), \text{ and so } \mathcal C(\omega_1,\delta)\cap \mathcal C(\omega_2,\delta)=\emptyset,
\end{equation}
where $\mathcal C(\omega,r)\subset\sph{d-1}$ denotes the open cap of radius $r$ centered at $\omega$.
Let $\varphi={\bf 1}_{\mathcal C(\omega_1,\delta)}+{\bf 1}_{\mathcal C(-\omega_1,\delta)}-{\bf 1}_{\mathcal C(\omega_2,\delta)}-{\bf 1}_{\mathcal C(-\omega_2,\delta)}$.
Then $\varphi\in L^\infty(\sph{d-1})$ is non-zero, real-valued and even, and such that $\varphi\perp{\bf 1}$. 
Moreover, \eqref{eq:choosedelta} forces 
\[\frac12\int_{\sph{d-1}}\varphi\gamma\, \d\sigma= \int_{\mathcal C(\omega_1,\delta)} \gamma\,\d\sigma-\int_{\mathcal C(\omega_2,\delta)} \gamma\,\d\sigma>0,\]
which in light of \eqref{eq:phigamma} implies that ${\bf 1}$ is not a critical point of $\Phi$.
This concludes the proof.
\end{proof}

\noindent {\sc Remark:} As noted after the statement of Proposition \ref{prop:criticalpoints}, a non-radial function $G\colon \overline{B_4}\to \mathbb R$ will in general fail to satisfy the property that $G\ast \sigma^{\ast 3}$ is constant on $\sph{d-1}$. However, and perhaps surprisingly, there exist non-radial functions $G$ for which this property does hold. A simple example in dimension $d=3$ is given by
\begin{equation}\label{eq:non_radial_example}
    \begin{array}{ccc}
    \displaystyle    G(y)=u(\lvert y \rvert)\frac{y}{\lvert y \rvert} \cdot e_3, \text{ where } u(r)=r(2-r)\left(r-\frac{35}{26}\right)\mathbf 1_{[0,2]}(r),& y\in\mathbb R^3, r>0;
    \end{array}
\end{equation}
here, $e_3=(0,0,1)$. To prove this, recall that $\sigma^{\ast 3}$ is supported on $\overline{B_3}\subset\R^3$ and satisfies $\sigma^{\ast 3}(\lvert y \rvert)= 8\pi^2$ for $\lvert y\rvert \le 1$ and $\sigma^{\ast 3}(\lvert y \rvert)=4\pi^2(3/\lvert y \rvert-1)$ for $1\leq\lvert y \rvert \le 3$; see~\cite[Eq.\@ (3.11)]{OSQu2}. Given $\omega\in\mathbb S^2$, let $R_\omega\in \operatorname*{SO}(3)$ be a rotation such that $R_\omega \omega=e_3$. Hence, 
\begin{equation}\label{eq:GastSigmaThree}
        (4\pi)^{-2}(G\ast \sigma^{\ast 3})(\omega)=
        \int_{e_3+B_1} 2G(R_\omega^{-1} y)\, \d y 
        + 
        \left( \int_{e_3+B_3}-\int_{e_3+B_1}\right) 
        G(R_\omega^{-1} y)\left(-1 +\frac{3}{\lvert y - e_3 \rvert}\right)\, \d y.
\end{equation}
Letting $\tilde{\omega}=R_\omega e_3$ and $r=\lvert y \rvert$, we have that $G(R_\omega^{-1} y)=\frac{u(r)}{r} \,y_3\, \tilde{\omega}_3 + \frac{u(r)}{r}(y_1, y_2, 0)\cdot \tilde \omega$. Observe that the second summand does not contribute to any of the integrals in~\eqref{eq:GastSigmaThree}, as the change of variables $(y_1,y_2, y_3)\mapsto (-y_1, -y_2, y_3)$ reveals. Letting $t=y_3/r$ and invoking the fact that $u$ is supported on $\overline{B_2}\subset\R^3$, the right-hand side of \eqref{eq:GastSigmaThree}  then reads, up to an irrelevant factor of $6\pi \tilde{\omega}_3$,\footnote{Here we are using the facts that $|y-e_3|\leq 1$ if and only if $t\geq \frac r2$, and that $|y-e_3|\leq 3$ if and only if $t\geq \frac{r^2-8}{2r}$.}
\begin{equation*}\label{eq:GastSigmaThreePolar}
\begin{split}
&\int_0^4 u(r)r^2\left[ \int_{\min\{r/2, 1\}}^1 t\left( 1-\frac{1}{(r^2-2rt+1)^\frac12}\right)\, \d t  + \int_{\max\{\frac{r^2-8}{2r}, -1\}}^1 t\left( -\frac13 +\frac{1}{(r^2-2rt+1)^\frac12}\right)\, \d t\right]\d r= \\
&\int_0^2 u(r)r^2\left[ \int_{r/2}^1 t\left( 1-\frac{1}{(r^2-2rt+1)^\frac12}\right)\, \d t  + \int_{-1}^1 t\left( -\frac13 +\frac{1}{(r^2-2rt+1)^\frac12}\right)\, \d t\right]\d r=\int_0^2 u(r)\frac{r^3(8-3r)}{24} \, \d r, \\ 
\end{split}
\end{equation*}
and the latter integral vanishes, since the function $u$ was defined precisely to ensure this (as a side remark, note that any non-zero function $u=u(r)$ supported on $[0,2]$ and orthogonal to the quartic polynomial $r^3(8-3r)$ on that interval would work). We conclude that $G\ast \sigma^{\ast 3} \equiv 0$ on $\mathbb S^2$, even though $G$ is plainly non-radial on $\overline{B_2}\subset\R^3$.

%\noindent {{\sc Remark:} If $g$ is radial, then so is $\widehat g\big|_{B_4}\ast \sigma^{\ast 3}$. On the other hand, it is easy to come up with a non-radial function $g$ for which $\widehat g\big|_{B_4}\ast \sigma^{\ast 3}$ is {\it not} constant on $\sph{d-1}$: take $\widehat g$ non-negative and of class $C^{10}$, so that (R3) and (R4.C) hold, supported in the intersection of $B_4\subset\R^d$ with a small neighborhood of a point on the boundary of $B_4$. However, I wasn't able to construct a non-radial function $g$ for which $\widehat g\big|_{B_4}\ast \sigma^{\ast 3}$ is constant on $\sph{d-1}$: Does such a thing exist? (I bet it does, and we tried hard to find one; but if it doesn't, then Proposition \ref{prop:criticalpoints} would take the following neater form: The function ${\bf 1}$ is a critical point of $\Phi$ if and only if $\widehat g\big|_{B_4}$ is radial.)

\subsection{Non-negativity and smallness}
In this section, we construct examples revealing that the non-negativity condition in (R1) and the smallness condition in \eqref{20210525_09:45} and \eqref{20210710_19:14} (associated to (R2.A) and (R2.C), respectively) cannot be dropped entirely from the set of running assumptions.  As usual, $J_{\alpha}$ will denote the Bessel function of the first kind of order $\alpha$. Considering a radial weight $h$, we will require the following computations, for $\varphi\in L^2(\mathbb S^{d-1})$ satisfying $\varphi\perp{\bf 1}$: 
\begin{equation*}
        \begin{split}
          \left.\frac{\d^2}{\d\lambda^2}\right|_{\lambda=0}\left( \int_{\mathbb S^{d-1}} \lvert \mathbf 1 + \lambda \varphi\rvert^2\, \d\sigma\right)^2 &= 4\,\sigma(\mathbb S^{d-1}) \int_{\mathbb S^{d-1}} \lvert \varphi\rvert^2\, \d\sigma, \\
            \left.\frac{\d^2}{\d\lambda^2}\right|_{\lambda=0} \int_{\mathbb R^d} \lvert \widehat\sigma+\lambda\widehat{\varphi\sigma}\rvert^4 h\, \d x &= 8\int_{\mathbb R^d} \widehat{\sigma}^2\lvert \widehat{\varphi \sigma}\rvert^2 h\, \d x + 4 \int_{\mathbb R^d} \widehat{\sigma}^2\Re[(\widehat{\varphi\sigma})^2]\, h\,\d x.
            \end{split}
        \end{equation*}   
By the previous subsection, $\mathbf 1$ is a critical point of the functional $\Phi$. We compute its second variation:
\begin{equation}\label{eq:PhiSecondDerivative}
    \left.\frac{\d^2}{\d\lambda^2}\right|_{\lambda=0}\Phi(\mathbf 1 + \lambda \varphi)\simeq_d \int_{\mathbb R^d} \widehat{\sigma}^2\left( 2\lvert \widehat{\varphi\sigma}\rvert^2+ \Re[(\widehat{\varphi\sigma})^2]\right)h\, \d x - \frac{\lVert \varphi \rVert_{L^2(\mathbb S^{d-1})}^2}{\sigma(\mathbb S^{d-1})}\int_{\mathbb R^d} \widehat{\sigma}^4 h\, \d x .
\end{equation}
A necessary condition for constant functions to maximize~\eqref{20210524_17:54} is that such second variation be non-positive for all test functions $\varphi$. We will consider $\varphi\in\{Y_2, iY_1\}$, where $Y_k$ denotes a real spherical harmonic on $\mathbb S^{d-1}$ of degree $k$. We recall that $\nu=d/2-1$ and the formula from \cite[Eq.\@ (2.3)]{COSS2}:
       \begin{equation}\label{eq:FTSphHarm}
                 \widehat{Y_k\sigma}(x)=\frac{(2\pi)^\frac{d}2}{i^k}\frac{J_{\nu+k}(\lvert x\rvert)}{\lvert x \rvert^\nu}Y_k\left(\frac{x}{\lvert x \rvert}\right).   
         \end{equation}
\begin{proposition}[Motivation for the smallness assumption]\label{prop:R4Nec}
Assume $d=3$. There exist $c_0>0$ and a radial Schwartz function $g:\R^3 \to \R$ with $\widehat{g}\ge 0$, such that, letting $h=\mathbf{1}+ c g$ for $c>0$, the following holds: 
\begin{equation}\label{eq:RFourNecessary}
    \begin{array}{cc}
    \displaystyle \left.\frac{\d^2}{\d\lambda^2}\right|_{\lambda=0}\Phi(\mathbf 1 + \lambda Y_2) \le 0& \text{if and only if } c\leq c_0.
    \end{array}
\end{equation}
In particular, the function {\bf 1} fails to maximize~\eqref{20210524_17:54} if $c>c_0$.
\end{proposition}
\begin{proof}
Specializing~\eqref{eq:PhiSecondDerivative} to $\varphi=Y_2$ and invoking~\eqref{eq:FTSphHarm}, we obtain via Plancherel's identity that
\begin{equation}\label{eq:SecondVarYTwo}
    \left.\frac{\d^2}{\d\lambda^2}\right|_{\lambda=0}\Phi(\mathbf 1 + \lambda Y_2)\simeq_d \int_0^\infty  H_2(r)h(r)\, r^{d-1}\, \d r\simeq_d\int_0^\infty \widehat{H}_2(\rho) \widehat{h}(\rho)\, \rho^{d-1}\, \d \rho,
\end{equation}
where $H_2(r):=J_\nu^2(r)[3J_{\nu+2}^2(r)-J_\nu^2(r)]r^{4-2d}$. 
On the right-hand side of \eqref{eq:SecondVarYTwo}, the hats refer to the Fourier transform of $h$ and $H_2$ seen as functions of the radial variable of $\mathbb R^d$, i.e.,
\begin{equation}\label{eq:RadialFT}
    \widehat{h}(\lvert\xi\rvert)=\int_{\mathbb R^d} h(\lvert x\rvert) e^{-ix\cdot \xi}\, \d x = (2\pi)^\frac{d}{2} \int_0^\infty h(r)\frac{J_\nu(r\lvert\xi\rvert)}{r^\nu \lvert\xi\rvert^\nu} r^{d-1}\, \d r,
\end{equation}
and similarly for $\widehat{H}_2$. Henceforth we assume $d=3$, whence $\nu=1/2$; one checks that $\widehat{H}_2$ is supported on the interval $[0,4]$ and given by
\begin{equation*}
    \widehat{H}_2(\rho)=
    \begin{cases}
      -{\frac {9}{80}}\,{\rho}^{5}+{\frac {9}{20}}\,{\rho}^{4}-{\frac {9}{8}}\,{\rho}^{3}+3\,{\rho}^{2}-3\,\rho-\frac{8}{5}
, & \rho\in[0, 2], \\ 
     \frac{1}{80}(\rho-4)^2(3\rho^4 - 12\rho^3 + 6\rho^2 -16)\rho^{-1}
, & \rho\in [2, 4].
%    0, &\rho\in [4, \infty).
    \end{cases}
\end{equation*}
Next we observe that the polynomial $3\rho^4 - 12\rho^3 + 6\rho^2 -16$ has exactly one positive root $\rho_0=3.556\ldots$, and that $\widehat{H}_2(\rho)<0$ for $\rho\in [0, \rho_0)$, whereas $\widehat{H}_2(\rho)>0$ for $\rho\in(\rho_0, 4)$; see Figure~\ref{fig:HTwoGraph}. 
\begin{figure}
    \centering
    \includegraphics[width=0.3\textwidth]{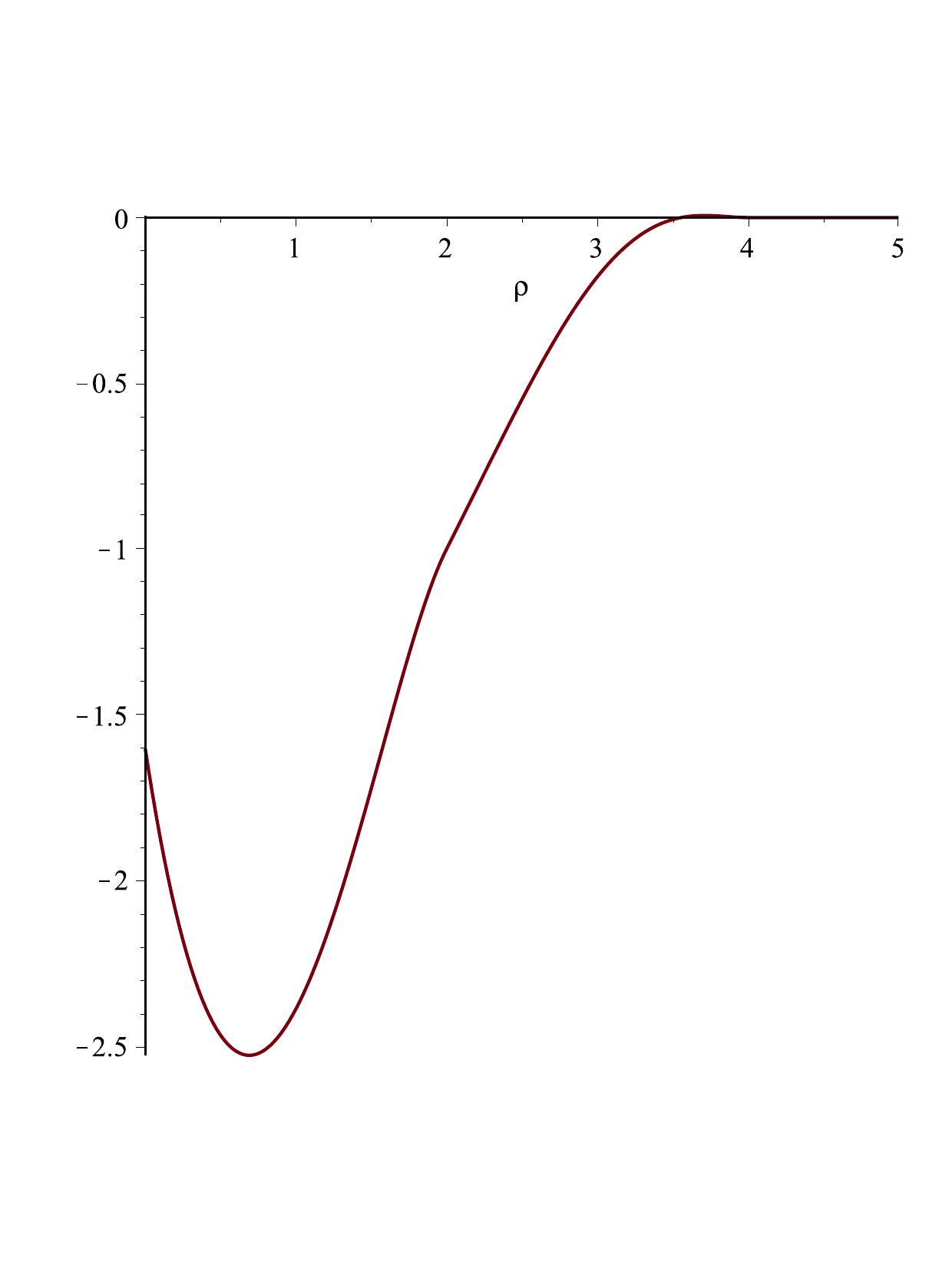}
    \includegraphics[width=0.3\textwidth]{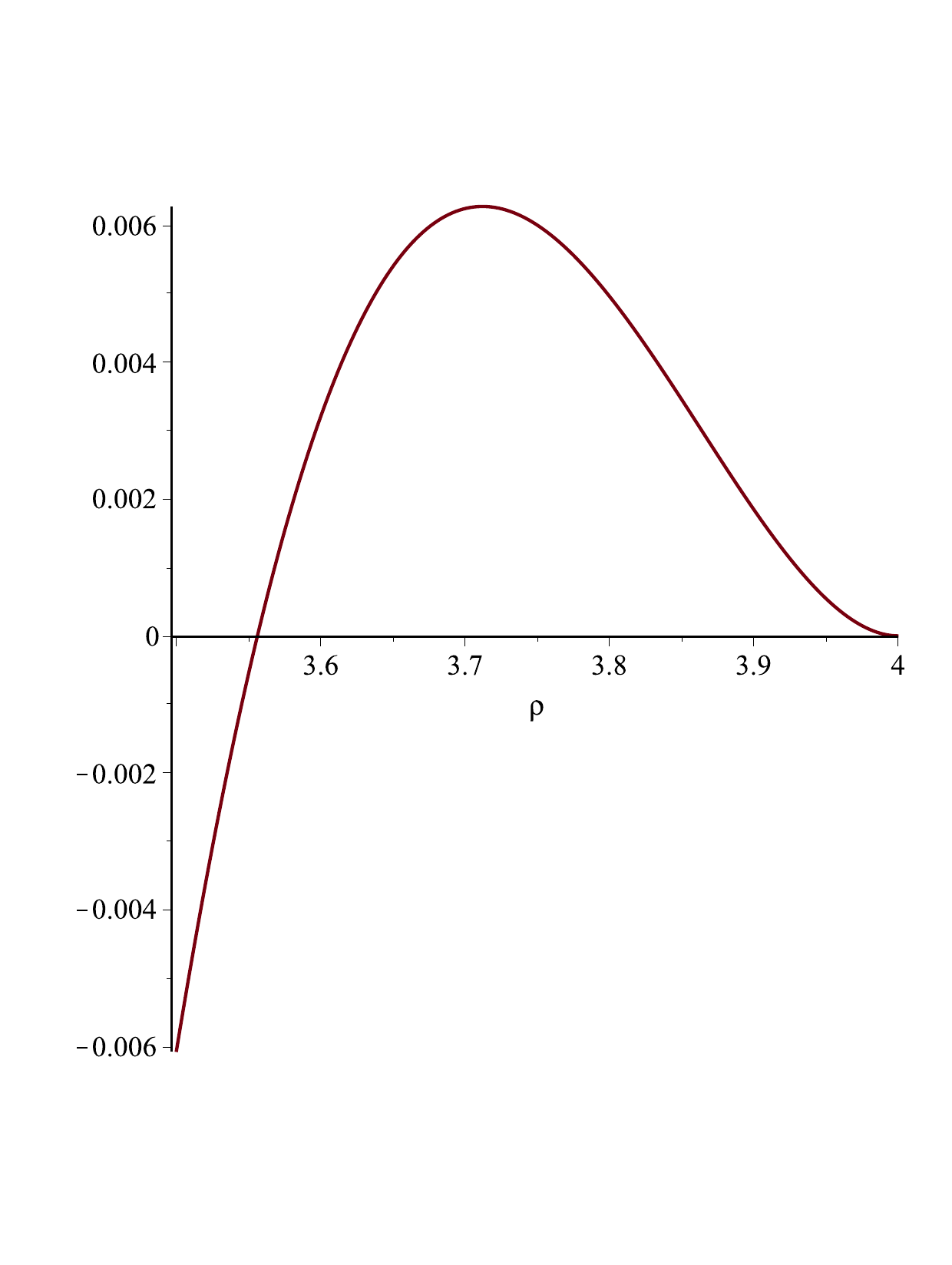}
    \caption{Graphs of $\widehat{H}_2=\widehat{H}_2(\rho)$ for $\rho\in [0, 5]$ and $\rho\in[\frac72, 4]$, respectively.}
    \label{fig:HTwoGraph}
\end{figure}
Consider a non-zero, smooth, compactly supported bump function $\psi\in C_0^\infty(\R)$ such that $\psi\geq 0$ and supp$(\psi)\subseteq[\rho_0,4]$, and define the radial function $g:\R^3\to\R$ via $\widehat g=\widehat g(|\cdot|)=\psi$ (hence $g$ is Schwartz). Recalling that $h(\lvert x \rvert)=1 + c \,g(\lvert x\rvert)$, hence $\widehat{h}(\lvert \xi\rvert)=(2\pi)^3\boldsymbol{\delta}(\xi) + c\, \widehat{g}(\lvert \xi\rvert)$, we infer from~\eqref{eq:SecondVarYTwo} that 
\begin{equation*}
    \left.\frac{\d^2}{\d\lambda^2}\right|_{\lambda=0}\Phi(\mathbf 1 + \lambda Y_2)\simeq_d -\frac{8}{5}(2\pi)^3+ c \int_{\rho_0}^4 \widehat{H}_2(\rho) \psi(\rho)\rho^2\, 
    \d\rho, 
\end{equation*}
for which \eqref{eq:RFourNecessary} holds with $c_0=\frac85(2\pi)^3 \left(\int_{\rho_0}^4 \widehat{H}_2(\rho) \psi(\rho)\rho^2 \d\rho\right)^{-1}>0$. 
This concludes the proof.
\end{proof}
\begin{proposition}[Motivation for the non-negativity assumption] Assume $d=3$. 
There exist $c_0>0$ and a radial and smooth $g\in L^1(\R^3)\cap L^\infty(\mathbb R^3)$ satisfying $\widehat{g}(\lvert \xi\rvert)\le 0$ for all $\xi\in \overline{B_4}$, with the following property.  If $c\in(0,c_0)$ and $h=\mathbf{1}+c g$, then
\begin{equation}\label{eq:SecondVariYone}
    \left.\frac{\d^2}{\d\lambda^2}\right|_{\lambda=0}\Phi(\mathbf 1 + \lambda iY_1) >0.
\end{equation}
In particular, the function {\bf 1} fails to maximize~\eqref{20210524_17:54} if $c<c_0$. 
\end{proposition}
\begin{proof}
    In a similar way to the proof of Proposition \ref{prop:R4Nec}, identities \eqref{eq:PhiSecondDerivative} and~\eqref{eq:FTSphHarm} yield
    \begin{equation}\label{eq:SecondVarYOneExpanded}
        \left.\frac{\d^2}{\d\lambda^2}\right|_{\lambda=0} \Phi(\mathbf 1 + \lambda iY_1)\simeq_d \int_0^\infty H_1(r) r^{d-1}\, \d r + c \int_0^\infty H_1(r) g(r) r^{d-1}\, \d r,
    \end{equation}
provided $c\in (0,c_0)$ and $c_0>0$ is sufficiently small (to be chosen below), where $H_1(r)=J_\nu^2(r)[ 3J_{\nu+1}^2(r)-J_\nu^2(r)] r^{4-2d}$. The first integral on the right-hand side of \eqref{eq:SecondVarYOneExpanded} vanishes (see~\cite[§5.1]{GoN}); we remark that this is a direct consequence of the modulation invariance of $\Phi$ in the unweighted case $h={\bf 1}$. For $d=3$, we compute $\widehat{H}_1$ explicitly; it is supported on the interval $[0,4]$ and given by
    \begin{equation}\label{eq:HOneFT}
        \widehat{H}_1(\rho)=\begin{cases} \frac18\rho^2(3\rho - 8), & \rho\in [0, 2], \\
        -\frac18 \rho(\rho-4)^2, & \rho\in [2, 4].
       % 0, & \rho\in [4, \infty).   
        \end{cases}
    \end{equation}
    Let $g=g(|\cdot|):=H_1$, which is smooth and belongs to $L^1(\R^3)\cap L^\infty(\mathbb R^3)$, and satisfies $\widehat{g}\le 0$ everywhere, as can easily be checked from~\eqref{eq:HOneFT}. 
    Next we choose $c_0>0$ small enough so that the numerator in \eqref{eq:PhiFunctional} is non-negative, thus ensuring differentiability of $\Phi$.
    Then~\eqref{eq:SecondVarYOneExpanded} immediately implies that $\left.\frac{\d^2}{\d\lambda^2}\right|_{\lambda=0} \Phi(\mathbf 1 + \lambda iY_1)>0$, and the proof is complete.
\end{proof}

\subsection{Quadratic weights}\label{quadweights}
In this subsection, we observe that all computations can be carried out explicitly if the perturbation is of the form $\widehat g(\xi)=(a+b|\xi|^2){\bf 1}_{\overline{B_4}}(\xi)$. This leads to an improved result in the dimension range $d\in\{3,4, 5, 6, 7\}$, as well as to a new sharp inequality in dimension $d=8.$\footnote{Our methods work to generate similar sharp inequalities in dimensions $d>8$ as well, but we keep the discussion only with $d=8$ for simplicity, as a proof of concept.} The next result 
%states that constant functions maximize \eqref{20210524_17:54} when $h={\bf 1}+bg$, for arbitrary $b>0$, provided $d\in\{3,4,5,6,7\}$.}
should be compared to the remark immediately following the statement of Theorem \ref{Thm1}.

\begin{proposition}\label{prop:tildeLd}
Let $g \in L^2(\R^d) \cap L^{\infty}(\R^d)$ be such that $\widehat{g}(\xi)=|\xi|^2$ if $|\xi|\leq 4$, and set $h={\bf 1}+b g$, for some $b> 0$.
For each $3\leq d\leq 7$, 
%there exists $\widetilde{\mathcal L}_d\in(0,\infty)$ with the following property:
%If $b<\widetilde{\mathcal L}_d$, then 
constant functions are the unique complex-valued maximizers of \eqref{20210524_17:54}. 
%Moreover, 
%\begin{equation}\label{eq:tildecalLd}
%\newb{\widetilde{\mc{L}}_3 = 2.11\ldots \ ; \ \widetilde{\mc{L}}_4 =25.6 } \ ; \ \widetilde{\mc{L}}_5=109.28\ldots  \ ; \  \newb{\widetilde{\mc{L}}_6 = 392.16\ldots} \ ; \ \widetilde{\mc{L}}_7 = 1002.16\ldots\ .
%\end{equation}
\end{proposition}

%\noindent {{\sc Remark:} The values \eqref{eq:tildecalLd} should be compared to those from \eqref{20210729_14:40}.

\noindent {\sc Remark:} If $d=3$ and $\widehat g=|\cdot|^2 \,{\bf 1}_{\overline{B_4}}$, then setting $b=1$ and invoking \eqref{eq:FTSphHarm} with $k=0$, we have that
%\footnote{Perhaps, in a neighborhood of \eqref{eq:FTdef}, we should  state explicitly that ${f}^\vee(x):=\frac{1}{(2\pi)^d} \int_{\R^d} e^{-i x \cdot \xi} \,f(\xi)\,\d\xi$, so as to ensure that the inversion formula $f=(\widehat f)^\vee$ holds with constant $1$?} 
\begin{equation}\label{eq:hinthiscase}
h(|x|)=1+\frac{1}{(2\pi)^3}\int_{|\xi|\leq 4} |\xi|^2 e^{ix\cdot\xi} \d\xi=1-\frac{\cos(4|x|)}{\pi^2 |x|^2}\left(32-\frac{12}{|x|^{2}}\right)+\frac{\sin(4|x|)}{\pi^2 |x|^3}\left(24-\frac{3}{|x|^2}\right),
\end{equation}
which is a {\it signed} radial weight for which constants are maximizers. In fact, one easily checks that 
$h(\pi)=1-\frac{32}{\pi^4}+\frac{12}{\pi^6}>0$ but 
$h(\frac{\pi}2)=1-\frac{128}{\pi^4}+\frac{192}{\pi^6}<0$. 
%and since $b=1<\widetilde{\mc{L}}_3$, Proposition \ref{prop:tildeLd} implies that ${\bf 1}$ is a global maximizer for the inequality corresponding to the weight $h$ given by \eqref{eq:hinthiscase}.
\begin{proof}[Proof of Proposition \ref{prop:tildeLd}]
The following {modified magical identity follows immediately from Proposition~\ref{prop:magic_id} applied to $h = {\bf 1}$, and holds for all non-negative  even $f\in L^2(\mathbb S^{d-1})$:}
\begin{equation}\label{eq:mod_magic_id}
    \int_{\mathbb R^d}\big(\widehat{f\sigma}(x)\big)^4(1+bg(x))\,\d x= \frac34 \int_{\mathbb R^d} \lvert \nabla \big((\widehat{f\sigma})^2\big)(x)\rvert^2\,\d x+b\int_{\mathbb R^d}\big(\widehat{f\sigma}(x)\big)^4g(x)\,\d x.
\end{equation}
Applying the Cauchy--Schwarz inequality like in §\ref{sec:cauchy_schwarz} we obtain the following modified version of~\eqref{20210527_14:23}:
\begin{equation}\label{eq:mod_cauchy_schwarz}
    \int_{\mathbb R^d}\big(\widehat{f\sigma}(x)\big)^4(1+bg(x))\,\d x \le \frac14\int_{(\sph{d-1})^2}\, f(\omega_1)^2f(\omega_2)^2\, \widetilde{M}_b(\omega_1, \omega_2)\, \d \sigma(\omega_1)\, \d \sigma(\omega_2), 
\end{equation}
where the kernel function $\widetilde{M}_b$ is now given by 
\begin{equation}\label{eq:mod_kernel}
    \begin{split}
        \widetilde{M}_b(\omega_1, \omega_2)&= K_{\bf 1}(\omega_1\cdot \omega_2)+4b \int_{(\mathbb S^{d-1})^2}\widehat{g}\left( \sum_{j=1}^4 \omega_j\right)\, \d \sigma(\omega_3)\,\d\sigma(\omega_4) \\ 
        &= K_{\bf 1}(\omega_1\cdot \omega_2) + 8\,b\,\sigma(\mathbb S^{d-1})^2\, (2+\omega_1\cdot\omega_2),
    \end{split}
\end{equation}
with $K_{\bf 1}$ given by~\eqref{eq:kappad}, and we used that $\widehat{g}(\xi)=\lvert \xi\rvert^2$ for $\lvert\xi\rvert\le 4$ to compute the integral. Only the first summand in~\eqref{eq:mod_kernel} gives a nontrivial contribution to~\eqref{eq:mod_cauchy_schwarz}, since the function $f$ that appears there is even. We can therefore continue the proof like in the unperturbed case $b=0$, in which case the constant functions are maximizers~(\cite{COS15}). The argument of Section \ref{sec:classification} shows that constants are the unique maximizers, since $b\ne 0$.  
\end{proof}
\noindent {\sc Remark:} The condition $b>0$ in Proposition \ref{prop:tildeLd} plays the same role as the non-negativity requirement in the assumption (R1), and the proof breaks down at several places if such condition is dropped. For instance, it is unclear whether~\eqref{eq:mod_cauchy_schwarz} would hold if $b<0$. 
\smallskip

The modified magical identity~\eqref{eq:mod_magic_id} worked well in the proof of Proposition \ref{prop:tildeLd} (leading to simpler computations) since the unperturbed problem had already been solved in~\cite{COS15}. However, the full magical identity from Proposition~\ref{prop:magic_id}  is needed in the proof of Theorem \ref{thm:NewSharpIneq}, since we will be working in dimension $d=8$, for which the unperturbed problem remains unsolved.
The proof of Theorem~\ref{thm:NewSharpIneq} further requires the following key lemma, which amounts to an interesting sharp inequality on its own.
\begin{lemma}\label{lem:Reight}
    Let $g \in L^2(\R^8) \cap L^{\infty}(\R^8)$ be such that $\widehat{g}(\xi)=a-b|\xi|^2$ if $|\xi|\leq 4$, where $a\ge 16b$ and $b> b_\star:=\frac{2^{21}\pi^2}{5^2 7^2 11}$, and set $h={\bf 1}+g$. Then constants are the unique complex-valued maximizers of \eqref{20210524_17:54}. 
\end{lemma} 
\begin{proof}[Proof of Lemma~\ref{lem:Reight}]
We follow the same steps as in the proofs of Theorem~\ref{Thm1} and~\ref{Thm1b}; note that $\widehat{g}|_{\overline{B_4}}\ge 0$ due to our choice of $a$ and $b$, so the assumption (R1) is satisfied (in fact, the only purpose of the parameter $a$ is to ensure that (R1) holds). The kernel $K_h$, originally introduced in \S\ref{sec:cauchy_schwarz}, is now given by $K_{\bf 1}+a M_0-b M_2$. Here $K_{\bf 1}$ is given by \eqref{eq:kappad}. Following~\eqref{20210528_11:33}, we further consider
%\footnote{We will see that this term does not contribute to the subsequent spectral analysis; an analogous phenomenon happens in the proof of Proposition~\ref{prop:tildeLd}. The purpose of the parameter $a$ in the definition of $g$ is solely to guarantee that $\widehat{g}|_{\overline{B_4}}\ge 0$.}
\begin{equation}\label{eq:K_zero}
    \widetilde{M}_0(\omega_1, \omega_2):=\int_{(\mathbb S^7)^2}\left( \lvert\omega_1+\omega_2\rvert^2 +\lvert\omega_3+\omega_4\rvert^2\right)\, \d\sigma(\omega_3)\d\sigma(\omega_4)= \sigma(\mathbb S^7)^2\, (4+2\omega_1\cdot\omega_2),
\end{equation}
so, setting $t=\omega_1\cdot \omega_2$, we have $M_0(t):= \widetilde M_0(\omega_1,\omega_2) = \sigma(\mathbb S^7)^2(4+2t)$. Finally,
 \[\widetilde M_2(\omega_1,\omega_2):=\int_{(\sph{7})^2}\left|\sum_{j=1}^4\omega_j\right|^2\left(|\omega_1+\omega_2|^2+|\omega_3+\omega_4|^2-(\omega_1+\omega_2)\cdot(\omega_3+\omega_4)\right) \d\sigma(\omega_3)\,\d\sigma(\omega_4)=\textup I+\textup{II}-\textup{III},
  \]
where each of the summands is defined as follows:
  \begin{align*}
  \textup I&:=|\omega_1+\omega_2|^2\int_{(\sph{7})^2}\left|\sum_{j=1}^4\omega_j\right|^2\d\sigma(\omega_3)\,\d\sigma(\omega_4),\\
 \textup {II}&:=\int_{(\sph{7})^2}|\omega_3+\omega_4|^2\left|\sum_{j=1}^4\omega_j\right|^2\d\sigma(\omega_3)\,\d\sigma(\omega_4),\\
  \textup {III}&:=\int_{(\sph{7})^2}(\omega_1+\omega_2)\cdot(\omega_3+\omega_4)\left|\sum_{j=1}^4\omega_j\right|^2\d\sigma(\omega_3)\,\d\sigma(\omega_4).
  \end{align*}
 Recall \eqref{20210720_23:08}, $\int_{(\sph{7})^2} |\omega_3+\omega_4|^2 \,\d\sigma(\omega_3)\,\d\sigma(\omega_4)=2\,\sigma(\sph{7})^2$. A simple change of variables $(\omega_3,\omega_4) \mapsto (-\omega_3,-\omega_4)$ yields
  \[\int_{(\sph{7})^2}(\omega_1+\omega_2)\cdot(\omega_3+\omega_4) \, |\omega_3+\omega_4|^k\,\d\sigma(\omega_3)\,\d\sigma(\omega_4)=0,\]
   for any $k\in\mathbb \Z_{\ge 0}$ and $(\omega_1,\omega_2)\in(\sph{7})^2$. From the identity 
$\int_{\sph{7}} (\omega\cdot \zeta)^2 \,\d\sigma(\omega) =  \frac{1}{8}\,  \sigma(\sph{7})$ for any $\zeta \in  \sph{7}$, we get
\begin{equation*}
    \int_{\mathbb S^7} \lvert \omega_3+\omega_4\rvert^4\, \d\sigma(\omega_3)= \tfrac92 \sigma(\mathbb S^7), 
\end{equation*}
and therefore
   \begin{equation*}
    \begin{array}{ccc} \frac{\textup{I}}{\sigma(\sph{7})^2}=2|\omega_1+\omega_2|^2+|\omega_1+\omega_2|^4,&
   \frac{\textup{II}}{\sigma(\sph{7})^2}= \frac92+2|\omega_1+\omega_2|^2, &
   \frac{\textup{III}}{\sigma(\sph{7})^2}=\frac12 |\omega_1+\omega_2|^2.
   \end{array}
  \end{equation*}
   Setting $t=\omega_1\cdot\omega_2$, we then have that
  \begin{equation}\label{eq:KgDef}
M_2(t):= \widetilde M_2(\omega_1,\omega_2)=\sigma(\sph{7})^2\left( \frac92 + \frac72(2+2t)+(2+2t)^2\right)=4\,\sigma(\mathbb S^7)^2\,t^2+P_1(t),
   \end{equation}
    where $P_1(t)$ is a degree 1 polynomial which will play no role in the following computations. Next we study the coefficients $\{\lambda_{8,h}(n)=\lambda_{8,{\bf 1}}(n)+a\lambda_{8,0}(n)-b\lambda_{8,2}(n)\}_{n\geq 0}$, analogous to the ones introduced in~\eqref{20210528_11:59}; here, 
\begin{equation}\label{eq:def_gegenbauer_coeff}\notag
    \begin{array}{ccc}
        \displaystyle \lambda_{8, \bf{1}}(n) = \sigma(\mathbb S^6)\int_{-1}^1\frac{C_n^3(t)}{C_n^3(1)}K_{\bf{1}}(t)(1-t^2)^\frac52\, \d t, & 
        \displaystyle \lambda_{8, k}(n) = \sigma(\mathbb S^6)\int_{-1}^1\frac{C_n^3(t)}{C_n^3(1)}M_k(t)(1-t^2)^\frac52\, \d t, & \text{for }k\in\{0, 2\}.
    \end{array}
\end{equation}

%\[\lambda_{d,g}(n)=\sigma_{d-2}\big(\sph{d-2}\big)\int_{-1}^1  \frac{C_{n}^{\frac{d-2}{2}}(t)}{C_{n}^{\frac{d-2}{2}}(1)} \, K_g(t) \,(1-t^2)^{\frac{d-3}2} \,\d t.\]
The proof will be complete once we show the analogous result to Lemma~\ref{Lem6_crux}; namely, that $\lambda_{8, h}(2\ell)<0$ for every $\ell\ge 1$.\footnote{Observe that $\lambda_{8, h}(0) > 0$.} Since $M_0(t)$ and $M_2(t)$ are polynomials of degree $1$ and $2$ respectively, $\lambda_{8, 0}(2\ell)=0$ for $\ell\ge 1$ and $\lambda_{8, 2}(2\ell)=0$ for $\ell\ge 2$.  Now, reasoning as in \cite[\S 5.5]{COS15}, we have that 
\begin{equation*}
    \begin{array}{cc}  
        \displaystyle\lambda_{8, \bf{1}}(2)%=\frac{1048576\pi^{14}}{1819125}
       =\frac{2^{20}\pi^{14}}{3^3 5^3 7^2 11}, & \text{ and }\lambda_{8, \bf{1}}(2\ell)<0\text{ for }\ell\ge 2.
    \end{array}
\end{equation*}
On the other hand, a straightforward computation reveals that $\lambda_{8, 2}(2)=\frac{\pi^{12}}{2 \times3^3\times 5}$; noticing that $b_\star=\frac{\lambda_{8, \bf{1}}(2)}{\lambda_{8, 2}(2)}$, we then have $\lambda_{8, h}(2)=\lambda_{8, \bf{1}}(2)-b\lambda_{8, 2}(2) < 0$, and we conclude that constant functions maximize~\eqref{20210524_17:54}. The uniqueness statement follows from the argument of Section \ref{sec:classification}, as in the proof of Proposition~\ref{prop:tildeLd}. 
\end{proof}
\begin{proof}[Proof of Theorem~\ref{thm:NewSharpIneq}] Let $\widehat{g}_{a, b}(\xi)=(a-b\lvert\xi\rvert^2){\bf 1}_{\overline{B_4}}(\xi)$ and $\widehat{h}_b(\xi)=b\lvert\xi\rvert^2{\bf 1}_{\overline{B_4}}(\xi)$. By Lemma \ref{lem:Reight}, the first of the following two ratios is maximized when $f$ is a constant function on $\mathbb S^7$:
\begin{equation}\label{eq:weighted_ratios}
    \begin{array}{cc}
        \displaystyle \frac{ \displaystyle \int_{\mathbb R^8} \lvert \widehat{f\sigma}(x)\rvert^4 (1+g_{a, b}(x))\, \d x}{\lVert f \rVert_{L^2(\mathbb S^7)}^4}, &        \displaystyle \frac{ \displaystyle\int_{\mathbb R^8} \lvert \widehat{f\sigma}(x)\rvert^4 h_{ b}(x)\, \d x}{\lVert f \rVert_{L^2(\mathbb S^7)}^4}.
    \end{array}
\end{equation}
The proof of Proposition \ref{prop:tildeLd} reveals that the second ratio is also maximized when $f$ is constant. 
Indeed, by computations analogous to~\eqref{eq:mod_cauchy_schwarz} and~\eqref{eq:mod_kernel}, we have that
\begin{equation}\label{eq:modmod_cauchy_schwarz}
    \begin{split}
        \int_{\mathbb R^8} (\widehat{f\sigma}(x))^4h_b(x)\, \d x & \le 2\,b\,\sigma(\mathbb S^7)^2\int_{(\mathbb S^7)^2}f(\omega_1)^2f(\omega_2)^2(2+\omega_1\cdot\omega_2)\, \d \sigma(\omega_1)\d \sigma(\omega_2) \\ 
        &=4\,b\,\sigma(\mathbb S^7)^2\lVert f\rVert_{L^2(\mathbb S^7)}^4,
    \end{split}
\end{equation}
with equality for constant $f$, provided that $f\in L^2(\mathbb S^7)$ is non-negative and even; recall that these extra assumptions can be incorporated by Lemma~\ref{Lem2_non-negative} and Lemma~\ref{Lem3_even}, respectively. 

\smallskip

Therefore, the ratio $\lVert f\rVert_{L^2(\mathbb S^7)}^{-4}\int_{\mathbb R^8} \lvert\widehat{f\sigma}(x)\rvert^4(1+g_{a, b}(x)+h_b(x))\, \d x$ is also maximized by constant $f$. To conclude, observe from \eqref{eq:Q_h_decomp} that 
\begin{equation*}
    \int_{\mathbb R^8} \lvert\widehat{f\sigma}(x)\rvert^4(1+g_{a, b}(x)+h_b(x))\, \d x  =  \int_{\mathbb R^8} \lvert \widehat{f\sigma}(x)\rvert^4\, \d x +a\left| \int_{\mathbb S^7}  f(\omega)\, \d \sigma(\omega)\right|^4,
\end{equation*}
which is the left-hand side of the sought inequality~\eqref{eq:corrector_estimate}. This completes the proof.
\end{proof}

\section*{Acknowledgements}
EC acknowledges support from FAPERJ - Brazil. GN\@ and DOS\@ are supported by the EPSRC New Investigator Award ``Sharp Fourier Restriction Theory'', grant no.\@ EP/T001364/1.
DOS\@ acknowledges further partial support from the DFG under Germany's Excellence Strategy -- EXC-2047/1 -- 390685813. 
The authors are grateful to Dimitar Dimitrov, Felipe Gonçalves, Jo\~ao Pedro Ramos  and Luis Vega for insightful discussions during the preparation of this work. {The authors are also grateful to the anonymous referees for  valuable suggestions}.


\begin{thebibliography}{9999}

\bibitem{AS}
M. Abramowitz and I. Stegun, 
\newblock {\it Handbook of Mathematical Functions},
\newblock Dover Publications, 1965.

\bibitem{Ba}
{K. I. Babenko,
\newblock An inequality in the theory of Fourier integrals, 
\newblock Izv. Akad. Nauk. SSSR Ser. Mat. 25 (1961), 531--542.}

\bibitem{BRV}
{J. A. Barcel\'{o}, A. Ruiz and L. Vega, 
\newblock Weighted estimates for the Helmholtz equation and some applications, 
\newblock J. Funct. Anal. 150 (1997), 356--382.}

\bibitem{BZKTh}
J. Barker, C. Thiele and P. Zorin-Kranich,
\newblock Band-limited maximizers for a Fourier extension inequality on the circle, II,
\newblock {Exp. Math. 32 (2023), no. 2, 280--293.}

\bibitem{Be75}
W. Beckner, 
\newblock Inequalities in Fourier analysis,
\newblock Ann. of Math. (2) 102 (1975), no. 1, 159--182

\bibitem{BV20}
D. Beltran and L. Vega, 
\newblock Bilinear identities involving the $k$-plane transform and Fourier extension operators. 
\newblock Proc. Roy. Soc. Edinburgh Sect. A 150 (2020), no. 6, 3349--3377.

\bibitem{BBCH}
J. Bennett, N. Bez, A. Carbery and D. Hundertmark, 
\newblock Heat-flow monotonicity of Strichartz norms,
\newblock Anal. PDE 2 (2009), no. 2, 147--158.

\bibitem{BBI15}
J. Bennett, N. Bez and M. Iliopoulou, 
\newblock Flow monotonicity and Strichartz inequalities,
\newblock Int. Math. Res. Not. IMRN 2015, no.~19, 9415--9437. 

\bibitem{BJO16}
N. Bez, C. Jeavons and T. Ozawa, 
\newblock Some sharp bilinear space-time estimates for the wave equation,
\newblock Mathematika 62 (2016), no.~3, 719--737. 

\bibitem{BMS}
{N. Bez, S. Machihara and M. Sugimoto, 
\newblock Extremisers for the trace theorem on the sphere, 
\newblock Math. Res. Lett. 23 (2016), 633--647.}

\bibitem{BR}
N. Bez and K. Rogers, 
\newblock A sharp Strichartz estimate for the wave equation with data in the energy space,
\newblock J. Eur. Math. Soc. (JEMS) 15 (2013), no. 3, 805--823.

\bibitem{BSS}
{N. Bez, H. Saito and M. Sugimoto, 
\newblock Applications of the Funk--Hecke theorem to smoothing and trace estimates, 
\newblock Adv. Math. 285 (2015), 1767--1795.}

\bibitem{BrOSQu}
G. Brocchi, D. Oliveira e Silva and R. Quilodr\'{a}n, 
\newblock Sharp Strichartz inequalities for fractional and higher-order Schr\"{o}dinger equations,
\newblock Anal. PDE 13 (2020), no. 2, 477--526.

\bibitem{Car1}
E. Carneiro,
\newblock A sharp inequality for the Strichartz norm,
\newblock Int. Math. Res. Not. IMRN (2009), no. 16, 3127--3145.

\bibitem{CFOST}
E. Carneiro, D. Foschi, D. Oliveira e Silva and C. Thiele, 
\newblock A sharp trilinear inequality related to Fourier restriction on the circle,
\newblock Rev. Mat. Iberoam. 33 (2017), no. 4, 1463--1486. 

\bibitem{CarOlSo}
E. Carneiro, L. Oliveira and M. Sousa, 
\newblock Gaussians never extremize Strichartz inequalities for hyperbolic paraboloids,
\newblock Proc. Amer. Math. Soc. 150 (2022), no. 8, 3395--3403.

\bibitem{COS15}
E. Carneiro and D. Oliveira e Silva, 
\newblock Some sharp restriction inequalities on the sphere,
\newblock Int. Math. Res. Not. IMRN 2015, no. 17, 8233--8267.

\bibitem{COSS1}
E. Carneiro, D. Oliveira e Silva and M. Sousa, 
\newblock Extremizers for Fourier restriction on hyperboloids,
\newblock Ann. Inst. H. Poincar\'{e} Anal. Non Lin\'{e}aire 36 (2019), no. 2, 389--415. 

\bibitem{COSS2}
E. Carneiro, D. Oliveira e Silva and M. Sousa, 
\newblock Sharp mixed norm spherical restriction, 
\newblock Adv. Math. 341 (2019), 583--608.

\bibitem{COSSS}
E. Carneiro, D. Oliveira e Silva, M. Sousa and B. Stovall,
\newblock Extremizers for adjoint Fourier restriction on hyperboloids: the higher dimensional case,
\newblock Indiana Univ. Math. J. 70 (2021), no. 2, 535--559.

\bibitem{ChQu}
M. Christ and R. Quilodr\'{a}n, 
\newblock Gaussians rarely extremize adjoint Fourier restriction inequalities for paraboloids,
\newblock Proc. Amer. Math. Soc. 142 (2014), no. 3, 887--896.

\bibitem{ChSh1}
M. Christ and S. Shao, 
\newblock Existence of extremals for a Fourier restriction inequality,
\newblock Anal. PDE 5 (2012), no. 2, 261--312. 

\bibitem{ChSh2}
M. Christ and S. Shao, 
\newblock On the extremizers of an adjoint Fourier restriction inequality,
\newblock Adv. Math. 230 (2012), no. 3, 957--977.

\bibitem{DaiXu}
F.~Dai and Y.~Xu, 
\newblock{\it Approximation Theory and Harmonic Analysis on Spheres and Balls},
\newblock{Springer Monogr. Math. Springer, New York, 2013.}




\bibitem{DMPS}
B. Dodson, J. Marzuola, B. Pausader and D. Spirn,  
\newblock The profile decomposition for the hyperbolic Schr\"{o}dinger equation,
\newblock Illinois J. Math. 62 (2018), no. 1-4, 293--320.

\bibitem{FVV}
L. Fanelli, L. Vega and N. Visciglia, 
\newblock On the existence of maximizers for a family of restriction theorems, 
\newblock Bull. Lond. Math. Soc. 43 (2011), no. 4, 811--817.

\bibitem{Fo}
D. Foschi,
\newblock Maximizers for the Strichartz inequality,
\newblock J. Eur. Math. Soc. (JEMS) 9 (2007), no. 4, 739--774.


\bibitem{Fo15}
D. Foschi,
\newblock Global maximizers for the sphere adjoint Fourier restriction inequality,
\newblock J. Funct. Anal. 268 (2015), no. 3, 690--702. 

\bibitem{FOS17} 
D. Foschi and D. Oliveira e Silva, 
\newblock Some recent progress in sharp Fourier restriction theory,
\newblock Analysis Math. 43 (2017), no. 2, 241--265.

\bibitem{FLS}
R. Frank, E. Lieb and J. Sabin, 
\newblock Maximizers for the Stein--Tomas inequality,
\newblock Geom. Funct. Anal. 26 (2016), no. 4, 1095--1134.

\bibitem{Go2}
F. Gon\c{c}alves,
\newblock Orthogonal Polynomials and Sharp Estimates for the Schr\"{o}dinger Equation,
\newblock Int. Math. Res. Not. IMRN (2017), no. 00, p. 1-28.

\bibitem{Go3}
F. Gon\c{c}alves,
\newblock A sharpened Strichartz inequality for radial functions,
\newblock J. Funct. Anal. 276 (2019), no. 6, 1925--1947. 

\bibitem{GoN}
F. Gon\c{c}alves and G. Negro, 
\newblock Local maximizers of adjoint Fourier restriction estimates for the cone, paraboloid and sphere,
\newblock Anal. PDE 15 (2022), no. 4, 1097--1130.

\bibitem{HS12}
D. Hundertmark and S. Shao, 
\newblock Analyticity of extremizers to the Airy--Strichartz inequality,
\newblock Bull. Lond. Math. Soc. 44 (2012), no.~2, 336--352.

\bibitem{HZ}
D. Hundertmark and V. Zharnitsky,
\newblock On sharp Strichartz inequalities in low dimensions,
\newblock Int. Math. Res. Not.  IMRN (2006), Art. ID 34080, 18 pp.

\bibitem{Jeavons14}
C. Jeavons,
\newblock A sharp bilinear estimate for the Klein--Gordon equation in arbitrary space-time dimensions,
\newblock Differential Integral Equations 27 (2014), no. 1-2, 137--156. 

\bibitem{Ku}
M. Kunze, 
\newblock On the existence of a maximizer for the Strichartz inequality, 
\newblock Comm. Math. Phys. 243 (2003), no. 1, 137--162. 

\bibitem{Ne}
G. Negro,
\newblock A sharpened Strichartz inequality for the wave equation,
\newblock {Ann. Sci. Éc. Norm. Supér. 56 (2023), no.~6, 1685--1708.}


\bibitem{NOST23} 
G. Negro, D. Oliveira e Silva and C. Thiele,
 When does $e^{-|\tau|}$ maximize Fourier extension for a conic section?
\newblock {\it Harmonic Analysis and Convexity}, 391--426. Adv. Anal. Geom., 9, De Gruyter, Berlin, 2023.



\bibitem{OS14}
D. Oliveira e Silva,
\newblock Extremals for Fourier restriction inequalities: convex arcs,
\newblock J. Anal. Math. 124 (2014), 337--385.

\bibitem{OSQu18}
D. Oliveira e Silva and R. Quilodr\'{a}n, 
\newblock On extremizers for Strichartz estimates for higher order Schr\"{o}dinger equations,
\newblock Trans. Amer. Math. Soc. 370 (2018), no. 10, 6871--6907. 

\bibitem{OSQu}
D. Oliveira e Silva and R. Quilodr\'{a}n, 
\newblock A comparison principle for convolution measures with applications,
\newblock Math. Proc. Cambridge Philos. Soc. 169 (2020), no. 2, 307--322.

\bibitem{OSQu2}
D. Oliveira e Silva and R. Quilodr\'{a}n, 
\newblock Global maximizers for adjoint Fourier restriction inequalities on low dimensional spheres,
\newblock J. Funct. Anal. 280 (2021), no. 7, 108825, 73 pp. 

\bibitem{OSQu3}
D. Oliveira e Silva and R. Quilodr\'{a}n, 
\newblock Smoothness of solutions of a convolution equation of restricted-type on the sphere,
\newblock Forum Math. Sigma 9 (2021), Paper No. e12, 40 pp. 

\bibitem{OST}
D. Oliveira e Silva and C. Thiele, 
\newblock Estimates for certain integrals of products of six Bessel functions,
\newblock Rev. Mat. Iberoam. 33 (2017), no. 4, 1423--1462. 

\bibitem{OSTZK}
D. Oliveira e Silva, C. Thiele and P. Zorin-Kranich,
\newblock Band-limited maximizers for a Fourier extension inequality on the circle,
\newblock Exp. Math., to appear.

\bibitem{OzR14} 
T. Ozawa and K. Rogers,
\newblock A sharp bilinear estimate for the Klein--Gordon equation in $\R^{1+1}$,
\newblock Int. Math. Res. Not. IMRN (2014), no. 5, 1367--1378.

\bibitem{OT98}
{T. Ozawa and Y. Tsutsumi, 
\newblock Space-time estimates for null gauge forms and nonlinear Schr\"{o}dinger equations, 
\newblock Differential Integral Equations 11 (1998), 201--222.}

\bibitem{Qu13}
R. Quilodr\'{a}n, 
\newblock On extremizing sequences for the adjoint restriction inequality on the cone,
\newblock J. Lond. Math. Soc. (2) 87 (2013), no.~1, 223--246.

\bibitem{Qu1}
R. Quilodr\'{a}n,
\newblock Nonexistence of extremals for the adjoint restriction inequality on the hyperboloid,
\newblock J. Anal. Math. 125 (2015), 37--70. 

\bibitem{Ram}
J. Ramos, 
\newblock A refinement of the Strichartz inequality for the wave equation with applications,
\newblock Adv. Math. 230 (2012), no.~2, 649--698. 

\bibitem{Sh09a}
S. Shao,
\newblock Maximizers for the Strichartz and the Sobolev--Strichartz inequalities for the Schr\"{o}dinger equation,
\newblock Electron. J. Differential Equations (2009), No. 3, 13 pp.

\bibitem{Sh09b}
S. Shao,
\newblock The linear profile decomposition for the Airy equation and the existence of maximizers for the Airy--Strichartz inequality,
\newblock Anal. PDE  2 (2009), no.~1, 83--117.

\bibitem{Sh16}
S. Shao, 
\newblock On existence of extremizers for the Tomas--Stein inequality for $S^1$,
\newblock J. Funct. Anal. 270 (2016), no. 10, 3996--4038. 

\bibitem{SteinBeijing}
E. M. Stein,
Oscillatory integrals in Fourier analysis.
\newblock {\it Beijing Lectures in Harmonic Analysis}, 307–355.
\newblock Ann. of Math. Stud., 112. Princeton University Press, Princeton, NJ, 1986.


\bibitem{St93} 
E. M. Stein,
\newblock {\it Harmonic Analysis: Real-Variable Methods, Orthogonality,
		and Oscillatory Integrals},
\newblock Princeton University Press, Princeton, NJ, 1993.



\bibitem{St77} 
R. S. Strichartz, 
\newblock Restrictions of Fourier transforms to quadratic surfaces and decay of solutions of wave equations, 
\newblock Duke Math. J. 44 (1977), no.~3, 705--714. 

\bibitem{Sz}
G. Szeg\"{o},
\newblock {\it Orthogonal Polynomials}, 
\newblock American Mathematical Society, Fourth Edition, 1975.

\bibitem{To75}
P.~Tomas,
\newblock A restriction theorem for the Fourier transform, 
\newblock Bull.~Amer.~Math.~Soc. 81 (1975), 477--478.



\bibitem{Wa16} 
H. Wang, 
\newblock On the optimal estimates and comparison of Gegenbauer expansion coefficients,
\newblock SIAM J. Numer. Anal. 54 (2016), no. 3, 1557--1581. 




\end{thebibliography}
\end{document}